\numberwithin{equation}{section}
\newtheorem{theorem}{Theorem}[section]
\newtheorem{proposition}[theorem]{Proposition}
\newtheorem{remark}{Remark}[section]
\newtheorem{corollary}{Corollary}[section]
\newcommand{\x}{{{x}}}
\newcommand{\xx}{{y}}
\newcommand{\p}{u}
\newcommand{\pU}{U} 
\renewcommand{\t}{\theta}
\newcommand{\ph}{\p_h}
\renewcommand{\th}{\t_h}
\newcommand{\lh}{\lambda_h}
\newcommand{\lhp}{\lambda_{h,+}}
\newcommand{\lhm}{\lambda_{h,-}}
\newcommand{\lhpm}{\lambda_{h,\pm}}
\newcommand{\wh}{w_h}
\renewcommand{\d}{\mathop{}\!\mathrm{d}}
\newcommand{\D}{\Omega}
\newcommand{\DI}{\D_{I}}
\newcommand{\DD}{\D\cup\DI}
\newcommand{\NM}{\mathcal{N}}
\newcommand{\R}{\mathbb{R}}
\newcommand{\kernel}{\gamma}
\newcommand{\kerneld}{\gamma(\x-\xx)}
\newcommand{\V}{{V}}
\newcommand{\Va}{\V_A}
\newcommand{\Vt}{\V_\t}
\newcommand{\Vb}{\V_B}
\newcommand{\M}{M}
\renewcommand{\tt}{\tau}
\newcommand{\cker}{c_{\kernel}}
\newcommand{\ckerr}{C_{\kernel}}
\newcommand{\cpot}{c_{F}}
\newcommand{\cm}{c_{F}}
\newcommand{\param}{\xi}
\newcommand{\G}{\mathcal{G}}
\newcommand{\J}{J_k}
\DeclareMathOperator*{\esssup}{ess\,sup}
\newcommand{\A}{{A}}
\newcommand{\tend}{T}
\newcommand{\pt}{{P}} 
\newcommand{\Sh}{{S}^h_{\DD}}
\newcommand{\Sho}{S^h_\D}
\newcommand{\e}{\varepsilon}
\newcommand*{\norms}[1]{{\left|{#1}\right|}}
\newcommand*{\norm}[1]{{\left\lVert#1\right\rVert}}
\newcommand*{\inner}[1]{{\left(#1\right)}}
\newcommand*{\norml}[1]{{\left\lVert#1\right\rVert}}
\newcommand*{\duala}[1]{{\left\langle{#1}\right\rangle}_{\Va}}
\newcommand*{\dualt}[1]{{\left\langle{#1}\right\rangle}_{\Vt}}
\newcommand*{\dualh}[1]{{\left\langle{#1}\right\rangle}_{H^1(\D)}}
\newcommand*{\dual}[1]{{\left\langle{#1}\right\rangle}}
\newcommand{\inp}[1]{\overline{#1}_{\tt}}
\newcommand{\inpl}[1]{\underline{#1}_{\tt}}
\newcommand{\inpt}[1]{\hat{#1}_{\tt}}
\newcommand{\inpp}[1]{\overline{#1}_{+,\tt}}
\newcommand{\inpm}[1]{\overline{#1}_{-,\tt}}
\newcommand{\Q}{Q}
\newcommand{\QC}{\hat{Q}}
\DeclareMathOperator*{\argmin}{arg\,min}
\begin{document}
\title[Non-isothermal nonlocal phase-field models]{Non-isothermal nonlocal phase-field models with a double-obstacle potential}
\author[O. Burkovska]{Olena Burkovska${}^{*}$}

\address{${}^*$ Computer Science and Mathematics Division, Oak Ridge National Laboratory, One Bethel Valley Road, TN 37831, USA}
\email{burkovskao@ornl.gov}
\thanks{
This material is based upon work supported by the U.S. Department of Energy, Office of Advanced Scientific Computing Research, Applied Mathematics Program under the award numbers ERKJ345 and ERKJE45; and was performed at the Oak Ridge National Laboratory, which is managed by UT-Battelle, LLC under Contract No. De-AC05-00OR22725. 
The US government retains and the publisher, by accepting the article for publication, acknowledges that the US government retains a nonexclusive, paid-up, irrevocable, worldwide license to publish or reproduce the published form of this manuscript, or allow others to do so, for US government purposes. DOE will provide public access to these results of federally sponsored research in accordance with the DOE Public Access Plan (https://www.energy.gov/doe-public-access-plan).
}

\begin{abstract}
{Phase-field models are a popular choice in computational physics to describe complex dynamics of substances with multiple phases and are widely used in various applications.  We present nonlocal non-isothermal phase-field models of Cahn-Hilliard and Allen-Cahn types involving a nonsmooth double-well obstacle potential. Mathematically, in a weak form, the model translates to a system of variational inequalities coupled to a temperature evolution equation.  We demonstrate that under certain conditions and with a careful choice of the nonlocal operator one can obtain a model that allows for sharp interfaces in the solution that evolve in time, which is a desirable property in many applications. This can be contrasted to the diffuse-interface local models that can not resolve sharp interfaces.  We present the well-posedness analysis of the models,  discuss an appropriate numerical discretization scheme,  and supplement our findings with several numerical experiments.}
\end{abstract}

\keywords{phase-field models, nonlocal operators, variational inequality, well-posedness, sharp interfaces, regularity, finite elements}
\subjclass[2010]{45K05; 35K55; 35B65; 49J40; 65M60; 65K15}


\maketitle

\section{Introduction}

Phase-field modeling is a powerful approach to describe various physical processes, such as, e.g., nucleation or solidification in material science~\cite{wang1993,kobayashi1993,xudu2023}
or image processing~\cite{bertozzi2007,garke2018}, just to mention a few. 
Most commonly, local phase-field models of Allen-Cahn or Cahn-Hilliard type, which are based on the local partial derivatives, are employed. However, those models always produce diffuse interfaces, which creates computational challenges in the setting when thin or sharp interfaces are desired. 

For example, in applications of solidification of materials the interface between two substances is described by a moving boundary problem that assumes sharp interfaces between pure phases. 
A practical realization of such models is challenging due to the coupling at a moving interface,  and instead a phase-field method can be adopted,  where the sharp interface is replaced by a diffuse phase-field model. This approach avoids an explicit tracking of the interface and simulates the whole evolution of a phase change.
However, to resolve very thin diffuse interfaces in order to ensure a good approximation to the sharp-interface solution, very fine meshes and/or more structurally complex models are needed.

In this paper, we propose a novel non-isothermal phase-field model based on a nonlocal phase-field system coupled to a temperature evolution equation. 
The nonlocality in the phase-field model allows to provide sharper interfaces in the solution independently of the mesh resolution and can help to alleviate the limitations of resolving thin diffuse interfaces in the local setting. 

More specifically, given a bounded domain $\D\subset\R^n$, $n\leq 3$ we consider the model
\begin{align}
\begin{dcases}
\partial_t\t -  D\upDelta\t - L\partial_t\p=0,\\
\mu\mathcal{G}\left(\partial_t \p\right) +B\p+ F_\p^{\prime}(\p,\t)=0,
\end{dcases}\quad\text{in}\quad (0,T)\times\D,\quad T>0,
\label{model_nonl_general}
\end{align}
where $\t(t,x)$ is the temperature and $\p(t,x)$ is the order parameter, which typically assumes values in $[0,1]$ with $\p=0$ and $\p=1$ corresponding to pure phases, e.g., liquid and solid, and $F(\p,\t)$ is a double-well potential that encourages the solution to admit pure phases.  Here, we assume it admits the following form
\begin{equation}
F(\p,\t)=F_0(\p)+m(\t)g(\p),
\label{potnetial}
\end{equation}
where $F_0(\p)$ is a double-well function that has minimum at $\p=0$ and $\p=1$ and a coupling term $m(\t)$ forces the solution to attain one phase over another depending on the value of the temperature $\t$ respective to some equilibrium temperature $\t_e$. 
The parameters $D$, $L$ and $\mu$ are the diffusivity, latent heat and relaxation time coefficients, respectively. The operator $B$ is a nonlocal diffusion operator 
\begin{equation*}
B\p =\int_{\R^n}(\p(\x)-\p(\xx))\kernel(\x-\xx)\d\xx, 
\end{equation*}
where $\kernel(\x-\xx)$ is a compactly supported integrable kernel that defines the nature and extent of nonlocal interactions.  Additionally,  $\G$ is the Green's function of  the operator $I-\beta\upDelta$, where $\beta\geq 0$ and $I$ is an identity operator. When $\beta=0$, we obtain a nonlocal Allen-Cahn equation, while $\beta>0$ corresponds to a non-mass conserving Cahn-Hilliard type equation.  We note that under appropriate conditions on the kernel we recover a local operator in the limit of vanishing nonlocal interactions, see, e.g.,~\cite{du2018CH},
\[
Bu\to -\varepsilon^2\upDelta u, \quad \text{for}\quad \varepsilon^2=\frac{1}{2n}\int_{\R^n}\kernel(\zeta)|\zeta|^2\d\zeta,
\]
where $\varepsilon$ is parameter that controls the width of the interface in the local model. Then, the local analogue of the model~\eqref{model_nonl_general} is obtained having $B\p=-\varepsilon^2\upDelta\p$,
and taking $\G=I$ (i.e, $\beta=0$), results in a prototypical isotropic model for solidification of pure materials, see, e.g.,~\cite{kobayashi1993,boettinger2002,wang1993,wheeler1993,steinbach2009} and references therein. For an overview of nonlocal and local phase-field models we refer to~\cite{bates1997overview,FifeReview2003,DuFeng2020,bates2006survey,steinbach2009}.

Our main contribution is to demonstrate that the proposed model~\eqref{model_nonl_general} can deliver solutions with time-evolving sharp interfaces and well-defined nonlocal interfacial energy,  which is,  to the best of our knowledge,  the first result in this direction. 
The following is essential for this:
First,  the fact that we employ nonlocal operators with integrable kernels and an obstacle potential is necessary to allow sharp transitions between pure phases.
Second,  the inclusion of the Green's operator is responsible of allowing evolution of discontinuous interfaces in time.  
For example,  considering the seemingly more natural $\G=I$, which corresponds to a nonlocal Allen-Cahn model,  in general does not permit discontinuities to evolve in time due to the fact that $Bu(t) \in L^2(\D)$ and the associated higher temporal regularity $\partial_t\p\in L^2(\D)$.  As an illustration,  consider the example of a moving discontinuity with a constant velocity $\nu>0$,  given with a Heaviside function ${H}$,
\[
\p(x,t)=H(x-\nu t),\quad x \in (0,1)\subset{\R^1},\quad t>0.
\]
It is clear that $\partial_t\p=-\delta_0(x-\nu t)$ is a distribution and it is not in $L^2(\D)$,  whereas $\G(\partial_t\p)$ is an element of $L^2(\D)$. 
Therefore,  the proposed model~\eqref{model_nonl_general}  with $\G(\partial_t\p)$ relaxes temporal regularity and allows discontinuities to evolve in time.  For this purpose,  we provide analysis of the time-discrete and time continuous formulations of the model and derive conditions under which sharp interfaces can be attained. 

This work will contribute to the growing literature dealing with the nonlocal phase-field models that has gained interest in recent years; see, e.g.,~\cite{BG2021CH,du2019,du2018CH,gal_nonlocal_2017,Foss2022,DuYang2016} and references therein. 
Many of those works consider an isothermal setting, i.e., assuming a constant temperature in the system. 
The non-isothermal case often has been studied for the nonlocal model of Caginalp type~\cite{caginalp1986analysis} that resembles a model structure~\eqref{model_nonl_general} with $\G=I$ and a linear coupling term ($m(\t)=\t$, $g(\p)=\p$), see,~\cite{armstrong2010,grasselli2013,bates1997global,feireisl2004,
bates2006,colli2019nonisothermal}.
In contrast, in this work we adopt a nonlinear coupling $m(\t)$ in~\eqref{potnetial} similar to the one used in the local Kobayashi model~\cite{kobayashi1993}.
Furthermore, we employ an obstacle-type potential $F_0(\p)$ involving an indicator function, which enforces the solution to always conform to the bounds $0\leq \p\leq 1$, which can be contrasted with the most commonly used regular potential that allows non-admissible solutions (see Section~\ref{sec:model}). Such structure of the potential introduces an additional non-smoothness into the system and a more care is required for the analysis and numerical treatment of the model.  {This work builds upon our previous work on the Cahn-Hilliard model with the obstacle potential~\cite{BG2021CH} and introduces a novel non-isothermal model with a non-mass conserving phase-field variable. While we adopt similar settings for the potential and nonlocal operator, the analysis of the model is more complicated due to the presence of the temperature and nonlinear coupling term.}

We comment on related works that adopt obstacle type potentials.
Local phase-field models with non-smooth potentials have been investigated in various works, see, e.g.,~\cite{BloweylElliott1994,colli2000} among others.
For the non-isothermal nonlocal case, a Caginalp type model with the obstacle potential has been analyzed in~\cite{colli2019nonisothermal} and extended to the optimal control problem in~\cite{colli2022OC}. 
The nonlocal operators considered in those works are of a fractional type defined via spectral theory, which is different from the bounded nonlocal operators adopted in this work. 
The study of different nonlocal non-isothermal models with a non-smooth potential has been also conduced in~\cite{krejci2004,krejci2007}.

The structure of the potential plays a pivotal role to guarantee sharp interfaces in the solution. Specifically,  adopting an obstacle potential in the nonlocal phase-field system allows to obtain sharp-interfaces with only pure phases, i.e., $\p\in\{0,1\}$, as it has been demonstrated in~\cite{BG2021CH} and will be established here for the non-isothermal case~\eqref{model_nonl_general}. 
As pointed out before,  in general those properties do not hold for nonlocal Allen-Cahn systems unless in a steady-state case.  For example, in~\cite{DuYang2016} it has been proved that a steady-state solution of the nonlocal Allen-Cahn equation with a regular potential can admit discontinuities, but due to the smooth nature of the potential the size of the jump is strictly smaller than one, confirming that a small transition layer occurs between pure phases.
Overall, the question of the sharpness of the interface in the solution of a nonlocal Allen-Cahn equation has been investigated in various works, see, e.g.,~\cite{DuYang2016,
DuYang2017,du2019,LWY2017,bates1997,fife1997,BatesChmaj1999,Chen1997}.

In addition to the analysis of model~\eqref{model_nonl_general}, 
we also present spatial and temporal discretization methods.
For the latter, we propose an implicit-explicit time stepping scheme that provides an efficient evaluation of the model,  and in the Allen-Cahn case ($\beta=0$)
allows to completely bypass a solution of a nonlinear phase-field system.

While the present work discusses the mathematical formulation of the model, in our forthcoming work~\cite{BDGR23} we will address several questions related to the performance of the model, asymptotic analysis and comparative study of the model in the context of solidification of pure materials. 

The structure of the paper is as follows.  In Section~\ref{sec:model} we introduce the nonlocal model and briefly discuss related local models.  Next,  in Section~\ref{sec:preliminaries} we formulate a model in a functional analytic framework.  In the subsequent Section~\ref{sec:time-discrete} we introduce a time-stepping scheme and derive the well-posedness result of the corresponding semi-discrete problem together with the regularity and the sharp-interface properties of the solution.  Analyzing the semi-discrete problem for vanishing temporal step size we derive the existence result in Section~\ref{sec:continuous_problem} for the continuous setting.  Finally, in Section~\ref{sec:discretization} we discuss the fully discrete scheme and illustrate our theoretical results with several one- and two-dimensional examples.  We present some concluding remarks in Section~\ref{sec:conclusion}.

\section{Model setting}\label{sec:model}

We introduce a nonlocal operator $B$, defined as follows
\begin{align}
Bu(\x)=\int_{\DD}(u(\x)-u(\xx))\kerneld\d\xx=\cker u(\x)- (\kernel*u)(\x), \quad\x\in\D,
\label{operatorB}
\end{align}
where a nonlocal kernel $\kernel:\R^n\to\R^+$ is radial integrable and compactly supported
\begin{equation}
\begin{aligned}
\kernel\in L^1(\R^n), \quad \kernel(\x)=\hat{\kernel}(|\x|)\;\;\text{and}\quad {(0,\sigma)\subset{\rm supp}(\hat{\kernel})\subset(0,\delta]},\;\delta>0, \;\sigma>0,
\label{kernel_cond1}
\end{aligned}
\end{equation}
where the nonlocal interaction radius $\delta>0$ defines an extent of nonlocal interactions and $\DI$ is a nonlocal interaction domain that incorporates interactions outside of $\D$:
\begin{align*}
\DI:=\{\xx\in\R^n\setminus\D\colon\kernel(\x,\xx)\neq 0,\quad\x\in\D\}.
\end{align*}
Here,  by $\kernel*\p$ we denote the convolution of $\kernel$ and $\p$ on $\DD$, and\footnote{For convenience of notation we suppose that $u$ is extended by zero outside of $\DD$.}
\begin{align}
\cker(\x):=\int_{\DD}\kerneld\d\xx,\quad 
\ckerr:=\int_{\R^n}\kerneld\d\xx,\label{cker}
\end{align}
where $\cker=\cker(\x)$ is constant for all $\x\in\D$ and 
$0\leq\cker(\x)\leq\ckerr<\infty$.  We consider 
\begin{align}
\begin{dcases}
{\partial_t\t} = \ D\upDelta\t+L{\partial_t\p},\\
\mu{\partial_t\p}+\A w=0,\\
 w= B\p +\partial_\p F(\p,\t), 
\end{dcases}\quad\text{in}\quad (0,T)\times\D,\quad T>0,
\label{eq:strong_solid_nonlocalCH}
\end{align}
where as before $D>0$,  $L$,  $\mu>0$ are a diffusivity,  latent heat and relaxation time parameters,  respectively.  A double well-potential $F(\p,\t)$ is of the form~\eqref{potnetial} and we use $\partial_\p F(\p,\t)$ to denote a subdifferential of a non-smooth potential $F$ which we introduce shortly. 
Here, $\A:=I-\beta\upDelta$, where $\beta\geq 0$ plays a role of a ``de-regularization'' parameter for a time derivative $\partial\p/\partial t$.
We complement the system~\eqref{eq:strong_solid_nonlocalCH} with the following initial and boundary conditions:
\begin{align*}
\p(0)=\p^0,\quad \t(0)=\t^0,\\
{\partial_n w}=0 \quad (\beta>0)\quad\text{and}\quad {{\partial_n\t}=0}\quad\text{on}\quad\partial\D,\quad \mathcal{N}\p = 0\quad\text{on}\quad\DI,
\end{align*}
where $n$ denotes an outward normal to $\partial\D$ and $\mathcal{N}u$ is a nonlocal flux condition on $\DI$, which is analogous to the local Neumann type boundary condition:
\begin{align*}
\mathcal{N}\p(\x):=\int_{\DD}(\p(\x)-\p(\xx))\kernel(\x-\xx)\d\xx=0,\quad\forall\x\in\DI.
\end{align*}
For $\beta=0$,~\eqref{eq:strong_solid_nonlocalCH} reduces to the nonsiothermal nonlocal Allen-Cahn equation:  
\begin{align}
\begin{dcases}
{\partial_t\t}= \ D\upDelta\t+L{\partial_t\p},\\
\mu{\partial_t\p}+ B\p +\partial_\p F(\p,\t)=0,
\end{dcases}\label{eq:strong_nonlocalAC}
\end{align}
which is a nonlocal analogue of an isotropic version of the local model for the solidification of pure materials, see, e.g.,~\cite{kobayashi1993,wang1993,boettinger2002} and references therein,
\begin{align}
\begin{dcases}
{\partial_t\t}=  D\upDelta\t+L{\partial_t\p}\\
\mu{\partial_t\p}-\varepsilon^2\upDelta\p + F_\p^{\prime}(\p,\t)=0,
\end{dcases}\label{model_localAC}
\end{align}
where $\varepsilon$ is an interface parameter.

\subsection*{Potential}
The most common choice of $F(\p)$ is a smooth double well potential
\begin{equation}
F(\p,\t) = \frac{1}{4}\p^2(1-\p)^2+m(\t)\left(\frac{1}{3}\p^3-\frac{1}{2}\p^2\right),\label{potential_regular}
\end{equation}
which can be considered as a smooth approximation of the more physically relevant 
logarithmic potential 
\begin{equation}
F(\p,\t)=\frac{\cpot}{2}\p(1-\p)+\frac{\sigma}{2}\left(\p\ln(\p)+(1-\p)\ln(1-\p)\right)-\cm m(\t)\p,\quad\p\in(0,1),
\end{equation}
for $ 0<\sigma<\cpot$,  or the obstacle potential
 \begin{equation}
F(\p,\t):=\frac{\cpot}{2}\p(1-\p)+\mathbb{I}_{[0,1]}(\p)-\cm m(\t)\p,\quad\cpot>0. 
\label{potential_obstacle}
 \end{equation}
 Here,  $\mathbb{I}_{[0,1]}(\p)$ is a convex indicator function of an admissible range $[0,1]$ and $\cpot>0$ is an appropriate scaling constant; see Figure~\ref{fig:potentials} for the illustrations of those potentials.
 
In contrast to the logarithmic potential,  the obstacle potential allows the solution to attain pure phases $\p=0$ and $\p=1$.
However, this potential is not differentiable in a classical sense and one must resort to the notion of subdifferentials.  More specifically,  we define the generalized differential of $F(\p,\t)$ with respect to $\p$ by
\begin{equation}
\partial_\p F(\p,\t)=-\cpot\p+\frac{\cpot}{2}-\cm m(\t)+\partial\mathbb{I}_{[0,1]}(\p),
\label{eq:subdiff_F}
\end{equation}
where $\partial\mathbb{I}_{[0,1]}(\p)$ is a subdifferential of the indicator function
\begin{equation*}
\partial\mathbb{I}_{[0,1]}(\p)=\begin{cases}
(-\infty,0] & \text{if } \p=0,\\
0 &\text{for } \p\in(0,1),\\
[0,+\infty) &\text{if } \p=1.
\end{cases}\label{subdiff}
\end{equation*} 
Furthermore, it is necessary that $F$ in~\eqref{potnetial} admits two local minima at $\p=0$ and $\p=1$ irrespective of the value of the temperature $\t$.  To ensure this in case of a smooth potential the following first and second order optimality conditions should hold:
\begin{align*}
\frac{ \partial F}{\partial\p}|_{\p=0,1}=F_0^\prime(\p)+m(\t)g^\prime(\p)|_{\p=0,1}=0,\\
\frac{\partial^2 F}{\partial\p^2}|_{\p=0,1}=F_0^{\prime\prime}(\p)+m(\t)g^{\prime\prime}(\p)|_{\p=0,1}=0.
\end{align*}
In case of the regular potential~\eqref{potential_regular} the above holds true if $m(\t)$ is chosen such that $|m(\t)|<1/2$.  For a non-smooth potential with the bound constraints such as~\eqref{potential_obstacle} the necessary conditions are
\begin{align*}
0\in\partial_u F(\p,\t)|_{\p=0,1},
\end{align*}
which is equivalent to $\cpot/2-\cpot m(\t)\geq 0$ ($\p=0$) and $-\cpot/2-\cpot m(\t)\leq 0$ ($\p=1$).  In case of a strict inequality it is also a sufficient condition,  which reduces to the same requirement $|m(\t)|<1/2$.
 \begin{figure}[t!]
\begin{subfigure}{.3\textwidth}
  \centering
  \includegraphics[width=\textwidth]{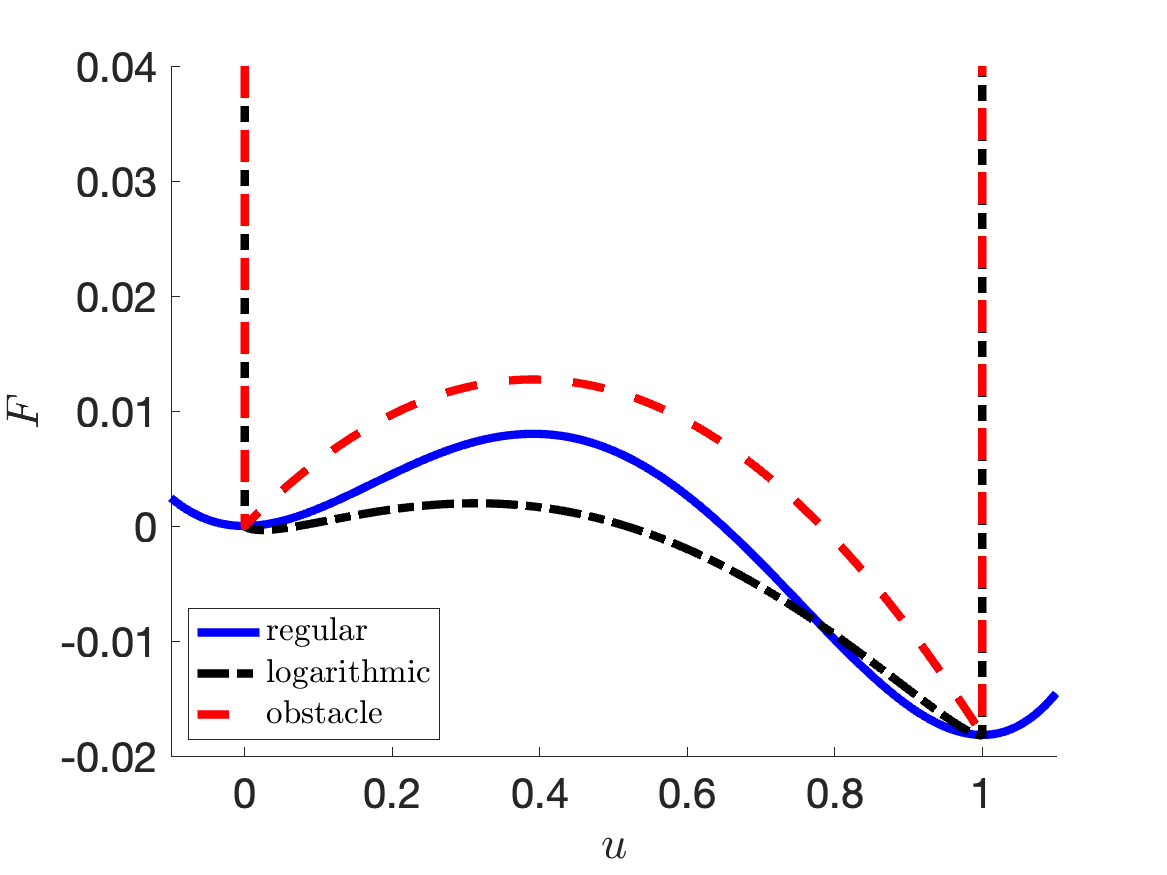}
  \caption{$m>0$ }
 \end{subfigure}
 \begin{subfigure}{.3\textwidth}
  \centering
   \includegraphics[width=\textwidth]{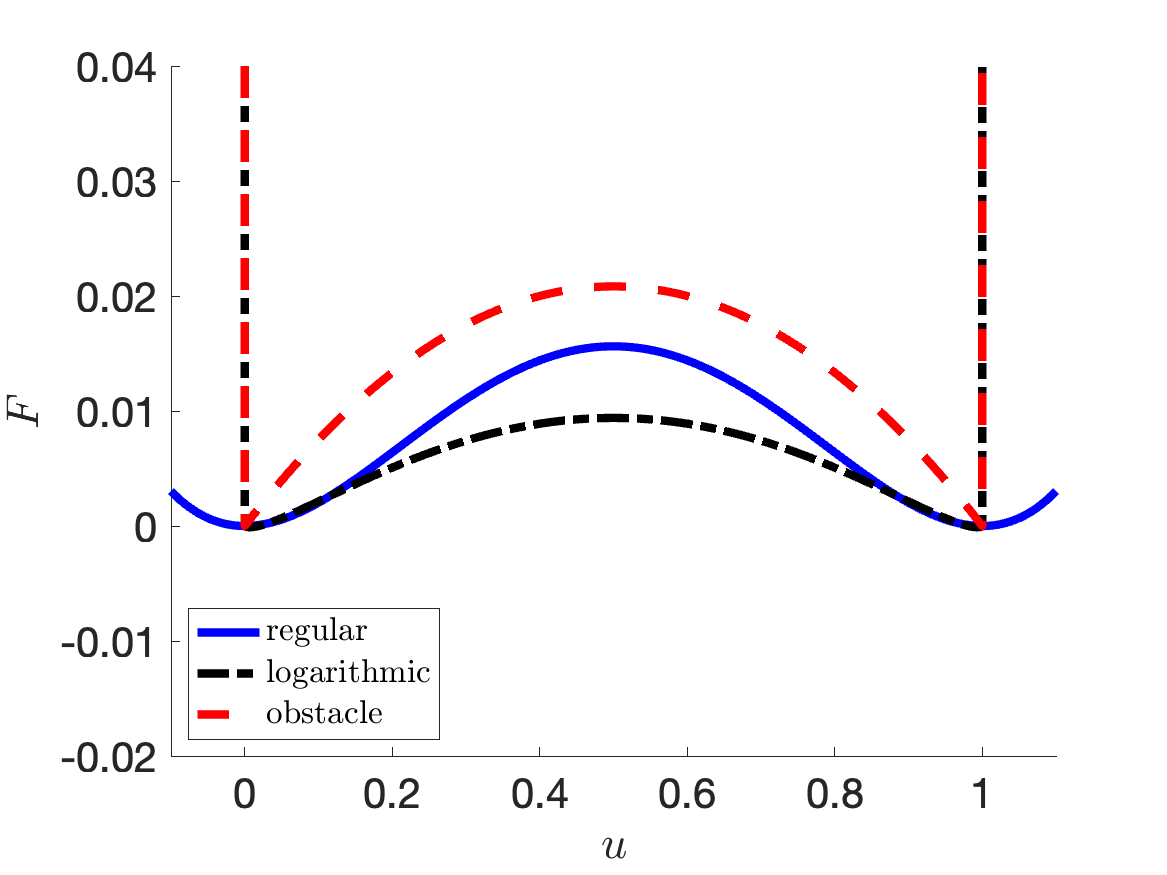}
  \caption{$m=0$}
 \end{subfigure}
 \begin{subfigure}{.3\textwidth}
  \centering
    \includegraphics[width=\textwidth]{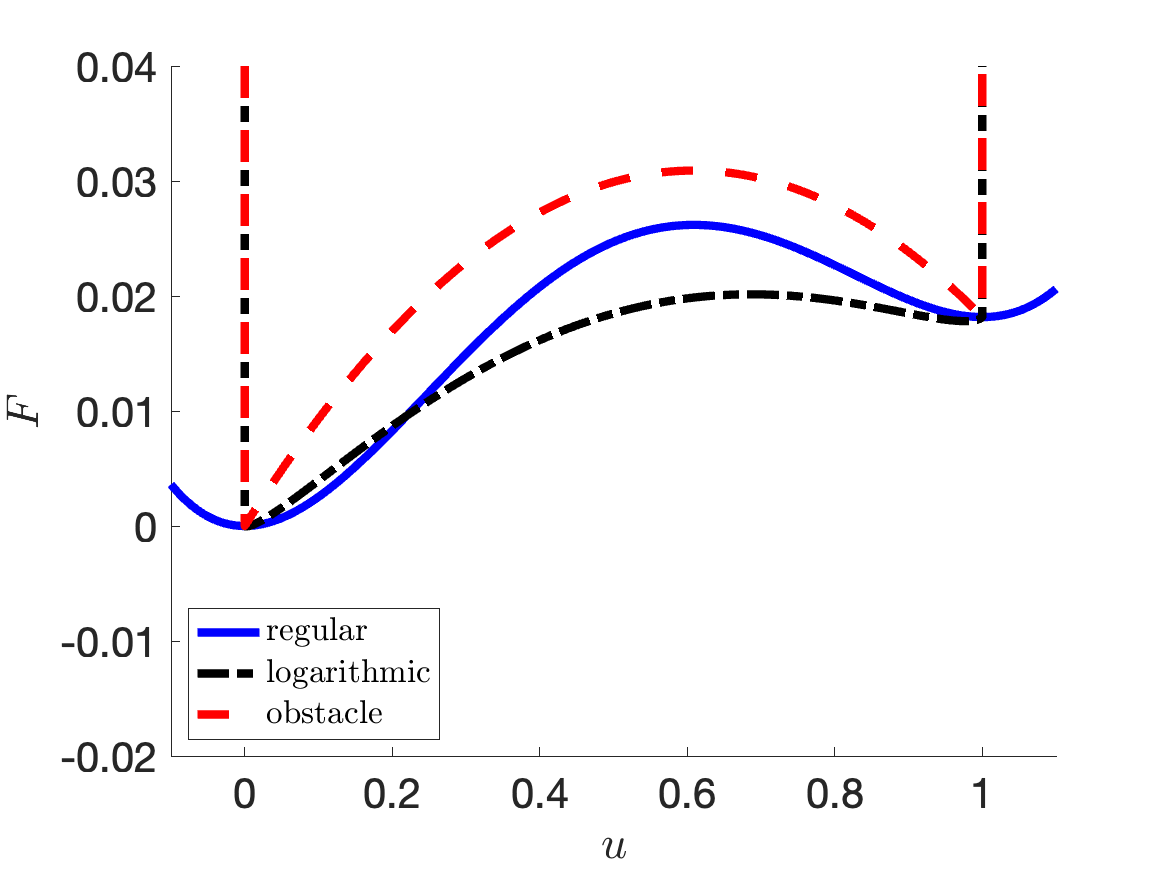}
  \caption{$m<0$}
 \end{subfigure}
 \caption{Illustration for the regular (blue), logarithmic (black) and obstacle potentials (red).}\label{fig:potentials}\end{figure} 
 
The coupling term $m(\t)$,  is usually chosen  as a function of a temperature,  which is almost linear around $\t=\t_{e}$.  In particular, in the Caginalp model~\cite{caginalp1986analysis} it is set to be linear, whereas in the Kobayashi model~\cite{kobayashi1993} it is chosen as a nonlinear function of~$\t$:
\begin{equation}
m(\t)=\left(\frac{\alpha}{\pi}\right)\tan^{-1}\left(\rho(\t_e-\t)\right),\label{mt_kobayashi}
\end{equation}
with an assumption that $0<\alpha<1$, which ensures that $|m(\t)|<1/2$. 
In the present exposition we assume that the function $m$ in~\eqref{potential_obstacle} is uniformly bounded and uniformly Lipschitz continuous in $\R$,  i.e.,  
\begin{equation}
|m(v)|\leq {1}/{2}\qquad\text{and}\qquad |m^\prime(v)|\leq C_m, \quad\forall v\in\R.
\label{mt_condition}
\end{equation}
For example,  using $m$ from~\cite{kobayashi1993} as defined in~\eqref{mt_kobayashi} fulfills the above requirements.
 
Now, invoking the definition of the nonlocal operator~\eqref{operatorB} and $\partial_u F$~\eqref{eq:subdiff_F}, the system~\eqref{eq:strong_solid_nonlocalCH} can be stated as follows
\begin{align}
\begin{dcases}
{\partial_t\t} = \ D\upDelta\t+L{\partial_t\p},\\
\mu{\partial_t\p}+\A w=0\\
w= \param\p -\kernel*\p+\frac{\cpot}{2}-\cm m(\t)+\lambda,\quad\lambda\in \partial\mathbb{I}_{[0,1]}(\p),
\end{dcases}\label{eq:model_subdif_lagrange}
  \end{align}
where $\param=\int_{\DD}\kernel(\x-\xx)\d\xx-\cpot$ is the nonlocal interface parameter that plays a similar role to the interface parameter $\varepsilon^2$ in the local settings in~\eqref{model_localAC}, meaning that the larger values of $\xi$ lead to a more diffuse interface, cf.~\cite{BG2021CH}.  However, in contrast to the local settings where a thin interface is obtained only in the limiting case $\varepsilon\to 0$, here we show that the sharp interface with only pure phases can appear for $\xi=0$.

\section{Preliminaries}\label{sec:preliminaries}
We denote by $L^p(\D)$ and by $W^{k,p}(\D)$, $p\in[1,\infty]$, $k\in\mathbb{N}$, the usual Lebesgue and Sobolev spaces  endowed with the norms $\norm{\cdot}_{L^p(\D)}$ and $\norm{\cdot}_{W^{1,1}(\D)}$. 
We also denote by $\inner{\cdot,\cdot}$ and $\norml{\cdot}$ an inner product and a norm on $L^2(\D)$ and 
set $H^1(\D)=W^{1,2}(\D)$ with $\norm{\cdot}^2_{H^1(\D)}=\norml{\nabla v}
^2+\norml{v}^2$, and recall the Poincar\'e inequality
\begin{equation}
\norml{v-v^{\D}}\leq C_P\norml{\nabla v},\quad\forall v\in H^1(\D),\label{eq:poincare_neumann}
\end{equation}
where $v^{\D}:=\frac{1}{|\D|}\int_{\D}v(\x)\d\x$. 
For a reflexive Banach space $X$ we denote its dual by $X^\prime$ and by $\dual{\cdot,\cdot}_{X}$ the corresponding duality pairing.  
For $T>0$ we introduce space-time cylinders $\Q:=(0,\tend)\times\D$ and $\QC:=(0,\tend)\times(\DD)$.
For notational convenience, we set $\Vt:=H^1(\D)$ with $\norm{v}_{\Vt}:=\norm{v}_{H^1(\D)}$, and let
\begin{equation*}
\Va:=\{v\in L^2(\D)\colon \norm{v}_{\Va}<\infty\},\quad\text{where}\quad \norm{v}_{\Va}^2:=\beta\norml{\nabla v}^2+\norml{v}^2, \quad\beta\geq 0,
\end{equation*}
with $(u,v)_{\Va}:=\beta(\nabla u,\nabla v)+(u,v)$.  It is clear, that for $\beta=0$, $\Va=L^2(\D)$, while for $\beta>0$, $\Va\cong H^1(\D)$ and the following norm equivalence holds
\begin{equation}
\min(1,\beta)\norm{v}^2_{H^1(\D)}\leq \norm{v}^2_{\Va}\leq\max(1,\beta)\norm{v}^2_{H^1(\D)}.
\label{eq:Va_H1_relation}
\end{equation}
We introduce a nonlocal space $\Vb$ according to~\cite{BG2021CH}:
\begin{equation*}
\Vb:=
\left\{v\in L^2(\DD)\colon\NM v=0\;\text{on }\DI\right\}\cong L^2(\D),
\end{equation*}
and recall the following nonlocal Green's first identity~\cite{Dunonlocal2012}:
\begin{equation*}
(Bu,v) 
=\frac{1}{2}\int_{\DD}\int_{\DD}(u(\x)-u(\xx))(v(\x)-v(\xx))\kerneld\d\xx\d\x+(v,\NM u)_{L^2(\DI)}.
\end{equation*}
We note the condition $\mathcal{N}v=0$ on $\DI$ can be also understood as an ``exterior" nonlocal problem posed on $\DI$ with volume constraints $v=g$ on $\D$, for which it holds~\cite{BG2021CH}
\begin{equation}
\norm{v}_{L^2(\DI)}\leq C_I\norm{g},
\label{eq:bound_exterior}
\end{equation}
where the constant $C_I$ depends only on $\kernel$,  and $\D$, $\DI$. 
Next,  we introduce a Green's operator $\G:\Va^\prime\to\Va$ of the operator $\A=(I-\beta\upDelta)$, $\beta>0$, with the Neumann boundary conditions. Then,  for a given $u\in\Va^\prime$ it solves
\begin{equation}
\inner{\A\G u,v}=\duala{u,v},\quad\forall v\in\Va, 
\label{greensVa_eq}
\end{equation}
i.e., $\G=\A^{-1}$,  and the following holds true
\begin{align}
\norm{u}^2_{\Va^\prime}=\norm{\G u}^2_{\Va}=\duala{u,\G u}.
\label{greens_Va}
\end{align}
For kernels satisfying~\eqref{kernel_cond1} we recall the following properties~\cite[Proposition 3.3]{BG2021CH}.
\begin{proposition}\label{prop:kernel_prop}
For $\kernel$ satisfying~\eqref{kernel_cond1} and for $\eta>0$ there exists a function $\kernel_\eta:\R^n\to\R^+$ satisfying~\eqref{kernel_cond1} and $\kernel_\eta\in W^{1,1}(\R^n)$ and a constant $C_{\eta}>0$,  such that
\begin{equation}
\norm{\nabla\kernel_\eta}_{L^{1}(\R^n)}\leq C_\eta,\quad\text{and}\quad\norm{\kernel-\kernel_\eta}_{L^1(\R^n)}\leq\eta,\quad C_\eta>0.\label{eq:kernel_molifiers}
\end{equation}
Furthermore, the sequence $\kernel_\eta*u\to\kernel*u$ converges uniformly in $L^{\infty}(\DD)$ for any $u\in L^{\infty}(\DD)$, and the limiting function is continuous
\begin{equation}
\kernel*u\in C(\DD),\quad\forall u\in L^{\infty}(\DD).
\end{equation}
\end{proposition}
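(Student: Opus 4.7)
The plan is to construct $\kernel_\eta$ as a standard convolution mollification of $\kernel$ and then verify each listed property in turn. Pick a non-negative radial mollifier $\rho\in C_c^\infty(\R^n)$ supported in $\overline{B_1(0)}$ with $\int_{\R^n}\rho=1$, and set $\rho_\epsilon(x):=\epsilon^{-n}\rho(x/\epsilon)$. Define $\kernel_\eta:=\kernel*\rho_\epsilon$, where $\epsilon=\epsilon(\eta)>0$ is chosen small enough that the standard $L^1$-approximation property of mollifiers yields $\norm{\kernel-\kernel_\eta}_{L^1(\R^n)}\le\eta$; this is possible because $\kernel\in L^1(\R^n)$. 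Since $\rho_\epsilon$ is smooth and compactly supported, $\kernel_\eta\in C^\infty(\R^n)$ with $\nabla\kernel_\eta=\kernel*\nabla\rho_\epsilon$, so Young's inequality gives the first estimate in~\eqref{eq:kernel_molifiers}:
\[
\norm{\nabla\kernel_\eta}_{L^1(\R^n)}\le\norm{\kernel}_{L^1(\R^n)}\norm{\nabla\rho_\epsilon}_{L^1(\R^n)} =: C_\eta,
\]
confirming $\kernel_\eta\in W^{1,1}(\R^n)$.

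Next I would verify that $\kernel_\eta$ still satisfies the structural assumption~\eqref{kernel_cond1}. Non-negativity is immediate from non-negativity of $\kernel$ and $\rho_\epsilon$, and radial symmetry of $\kernel_\eta$ follows from rotational invariance of the convolution of two radial functions. For the support, $\operatorname{supp}(\kernel_\eta)\subset\operatorname{supp}(\kernel)+\overline{B_\epsilon}\subset\overline{B_{\delta+\epsilon}}$ gives the upper bound. For the lower bound $(0,\sigma')\subset\operatorname{supp}(\hat{\kernel}_\eta)$, for any $x$ with $|x|<\sigma-\epsilon$ the choice $y=x$ in the convolution integral lies in the positivity region of $\kernel$, and $\rho_\epsilon(0)>0$; by continuity of the integrand and non-negativity, $\kernel_\eta(x)>0$. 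Hence \eqref{kernel_cond1} holds for $\kernel_\eta$ with adjusted constants $\sigma'=\sigma-\epsilon>0$ (for $\epsilon$ sufficiently small) and $\delta'=\delta+\epsilon$.

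For the uniform convergence in $L^\infty(\DD)$, Young's inequality applied to the zero-extended $u$ gives
\[
\norm{\kernel_\eta*u-\kernel*u}_{L^\infty(\DD)}\le\norm{\kernel_\eta-\kernel}_{L^1(\R^n)}\norm{u}_{L^\infty(\DD)}\le\eta\norm{u}_{L^\infty(\DD)},
\]
so $\kernel_\eta*u\to\kernel*u$ uniformly as $\eta\to 0$. Each $\kernel_\eta*u$ is continuous by the $L^1$-continuity of translation:
\[
|\kernel_\eta*u(x+h)-\kernel_\eta*u(x)|\le\norm{\kernel_\eta(\cdot+h)-\kernel_\eta(\cdot)}_{L^1(\R^n)}\norm{u}_{L^\infty(\DD)}\to 0
\]
as $h\to 0$, so $\kernel*u$ is continuous on $\DD$ as the uniform limit of continuous functions. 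The main obstacle I anticipate, though modest, is verifying that mollification preserves all the structural kernel conditions in~\eqref{kernel_cond1} simultaneously --- in particular, the existence of a non-trivial inner support interval $(0,\sigma')$; the $W^{1,1}$ bound, $L^1$-approximation, and uniform convergence then follow as standard consequences of mollifier theory and Young's inequality.
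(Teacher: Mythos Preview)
The paper does not supply its own proof of this proposition; it simply recalls it from \cite[Proposition~3.3]{BG2021CH}. Your mollification argument is the standard one and is correct in all essential points: the $W^{1,1}$ bound and the $L^1$-approximation are textbook mollifier properties, non-negativity and radiality are preserved by convolution with a non-negative radial mollifier, and the uniform convergence of $\kernel_\eta*u$ to $\kernel*u$ follows from Young's inequality exactly as you wrote.

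Two minor remarks. First, your justification of the inner support condition $(0,\sigma')\subset\operatorname{supp}(\hat{\kernel}_\eta)$ is slightly informal (``the choice $y=x$ in the convolution integral'' does not quite parse, since $\kernel$ is only an $L^1$ function); the clean way is to note that for $0<|x|<\sigma-\epsilon$ the ball $B_\epsilon(x)$ lies in the annulus $\{0<|z|<\sigma\}$, and since $\kernel\ge 0$ with $(0,\sigma)\subset\operatorname{supp}(\hat\kernel)$ one has $\int_{B_\epsilon(x)}\kernel>0$, hence $\kernel_\eta(x)=\int\kernel(z)\rho_\epsilon(x-z)\,dz>0$. Second, your continuity argument for $\kernel_\eta*u$ via $L^1$-continuity of translation applies verbatim to $\kernel$ itself, so the detour through uniform limits is not strictly needed for the continuity claim---though it does no harm, and the uniform convergence is a separately stated conclusion of the proposition anyway.
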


\subsection{Variational formulation}
We introduce a set of admissible solutions
\begin{equation}
\mathcal{K}:=\{v\in\Vb\colon 0\leq v\leq 1\;\; \text{a.\ e. in}\;\; \D\}.
\label{eq:set_K}
\end{equation}
Then,  a weak form of~\eqref{eq:model_subdif_lagrange} is to find $(\t(t),\p(t),w(t))\in\Vt\times\mathcal{K}\times\Va$ such that 
\begin{equation}
\begin{aligned}
\dualt{\partial_t\t(t),\phi}+D\inner{\nabla\t(t),\nabla\phi} - L{\dualt{\partial_t\p(t),\phi}}&=0,\quad\forall\phi\in\Vt,\\
\mu\duala{\partial_t\p(t),\psi}+{\duala{\A w(t),\psi}}&=0,\quad\forall\psi\in\Va,\\
 \inner{B\p(t)-\p(t)+{\cpot}/{2}-m(\t(t))-w(t) ,\zeta-\p(t)}&\geq 0,\quad\forall\zeta\in\mathcal{K},
\end{aligned}
 \label{CH_VI}
\end{equation}
subject to $u(0)=u^0\in\mathcal{K}$ and $\t(0)=\t^0\in\Vt$.  We define a positive cone 
\[
\M
=\{v\in L^2(\D)\colon v\geq 0\;\text{a.\ e. on}\;\D\},
\] 
and by introducing a Lagrange multiplier $\lambda(t)=\lambda_+(t)-\lambda_{-}(t)$ with $\lambda_{\pm}(t)\in M$,  
we can restate the above variational inequality as a system of complementarity conditions: Find $(\t(t),\p(t),w(t),\lambda_{\pm}(t))\in\Vt\times\Vb\times\Va\times M$ such that
\begin{equation}
\begin{aligned}
 \inner{B\p(t)-\p(t)+{\cpot}/{2}-\cm m(\t(t))-w(t)+\lambda(t),\zeta}&= 0,\quad\forall\zeta\in\Vb,\\
 \inner{\eta-\lambda_{+}(t),1-\p(t)}\geq 0,\quad \inner{\eta-\lambda_{-}(t),\p(t)}&\geq 0,\quad\forall\eta\in\M.
\end{aligned}
 \label{CH_VI_SP_inequality}
\end{equation}
Furthermore,  by taking $\eta=\pm\lambda_{\pm}(t)$ and $\eta=0$ in the above inequalities it is easy to show that the following property holds true for all $\eta\in\M$:
\begin{equation}
(\eta,1-\p(t))\geq 0,\;\; (\lambda_{+}(t),1-\p(t))=0\;\;\text{and}\;\; (\eta,\p(t))\geq 0,\;\; (\lambda_{-}(t),\p(t))=0.
\label{CH_VI_compl_split}
\end{equation}
An equivalent time-integrated version of~\eqref{CH_VI},\eqref{CH_VI_SP_inequality} is to find $\t\in L^2(0,\tend;\Vt)$,  $\p\in L^{\infty}(0,\tend;\Vb)$,  $w\in L^2(0,\tend;\Va)$, $\lambda_{\pm}\in L_{+}^2(\Q)=\{v\in L^2(\Q)\colon v\geq 0\;\text{a.e.}\;\Q\}$ such that $\partial_t\p\in L^2(0,T;\Va^\prime)$, $\partial_t\t\in L^2(0,T;\Vt^\prime)$, $u(0)=u^0$,  $\t(0)=\t^0$ and
\begin{equation}
\begin{aligned}
\int_{0}^{\tend}\left(\dualt{\partial_t\t,\phi}+D\inner{\nabla\t,\nabla\phi} - L{\dualt{\partial_t\p,\phi}}\right)\d t&=0,\quad\forall\phi\in L^2(0,\tend;\Vt),\\
\int_{0}^{\tend}\left(\mu\duala{\partial_t\p,\psi}+{\duala{\A w,\psi}}\right)\d t&=0,\quad\forall\psi\in L^2(0,\tend;\Va),\\
  \inner{B\p-\p+{\cpot}/{2}-\cm m(\t)-w(t)+\lambda,\zeta}_{L^2(\Q)}&= 0,\quad\forall\zeta\in L^2(0,\tend;\Vb),\\
 \inner{\eta-\lambda_{+},1-\p}_{L^2(\Q)}\geq 0,\quad \inner{\eta-\lambda_{-},\p}_{L^2(\Q)}&\geq 0,\quad\forall\eta\in L^2_+(\Q).
\end{aligned}\label{CH_VI_time}
\end{equation}

\section{Well-posedness of a time-discrete formulation}\label{sec:time-discrete}
{Next, we discretize~\eqref{CH_VI} in time and analyze the corresponding problem using an optimization approach.} 

For $T>0$ and $K\in\mathbb{N}$, we define $\tt=T/K$, $t^k=k\tt$, $k=1,\dots,K$.
Then, given $\t^0\in\Vt$, $\p^0\in\mathcal{K}$, we seek $(\t^k,\p^k,w^k)\in\Vt\times\mathcal{K}\times\Va$ such that
\begin{subequations}
\begin{align}
\inner{\t^k-\t^{k-1},\phi}+D\tt\inner{\nabla\t^k,\nabla\phi} -{L}\inner{\p^{k}-\p^{k-1},\phi}&=0,\quad\forall\phi\in\Vt \label{CH_VI_tk1}\\
{\mu}\inner{\p^k-\p^{k-1},\psi}+{\tt}\inner{w^k,\psi}+\beta{\tt}\inner{\nabla w^k,\nabla\psi}&=0,\quad\forall\psi\in\Va,\label{CH_VI_tk2}\\
\inner{B\p^k-\cpot\p^k+{\cpot}/{2}-\cm m(\t^{k-1}) - w^k,\zeta-\p^k}&\geq 0,\quad\forall\zeta\in\mathcal{K}\label{CH_VI_tk3}.
\end{align}\label{CH_VI_tk}\end{subequations}
Due to an explicit discretization of the coupling term $m(\t^{k-1})$, ~\eqref{CH_VI_tk} forms a decoupled system of equations. Furthermore, introducing $\lambda^k:=\lambda^k_{+}-\lambda^k_{-}$, $\lambda^k_{\pm}\in\M$ we can reformulate~\eqref{CH_VI_tk3} as a system of complementarity conditions:
\begin{subequations}\begin{align}
\inner{B\p^k-\cpot\p^k+{\cpot}/{2}-\cm m(\t^{k-1}) - w^k+\lambda^k,\zeta}&=0,\quad\forall\zeta\in\Vb,\label{CH_VI_tk_SP3}\\
\inner{\eta-\lambda^k_{+},1-\p^k}\geq 0,\quad \inner{\eta-\lambda^k_{-},\p^k}&\geq 0,\quad\forall\eta\in\M.\label{CH_VI_tk_SP4}
\end{align}\label{CH_VI_tk_SP}\end{subequations}
Then, this together with~\eqref{CH_VI_tk2} at each time step is the system of the Karush–Kuhn–\ Tucker (KKT) optimality conditions for the following minimization problem:
\begin{equation}
\min_{\p\in\mathcal{K}} \J(\p):=
\frac{\param}{2}\norml{\p}^2-\frac{1}{2}\inner{\kernel*\p,\p}+\frac{\mu}{2\tt}\norm{\p-\p^{k-1}}^2_{\Va^\prime}
+\frac12\inner{{\cpot} -2\cm m(\t^{k-1}),\p},
\label{min_problem}
\end{equation}
with $\xi:=\cker-\cpot$. 
Thus, for $k=1,\dots,K$, under appropriate conditions on the time step $\tt$ (as stated in Theorem~\ref{thm:wellpos_tk}) the problem~\eqref{CH_VI_tk_SP} can be equivalently expressed~as 
\begin{equation}
\begin{aligned}
\inner{\t^k-\t^{k-1},\phi}+D{\tt}\inner{\nabla\t^k,\nabla\phi} -{L}\inner{\p^{k}-\p^{k-1},\phi}&=0,\quad\forall\phi\in\Vt\\
u^{k}=\argmin_{\p\in\mathcal{K}} \J(\p).
\end{aligned}\label{min_problem_t}
\end{equation}

\begin{theorem}[Existence and uniqueness of a time-discrete solution]\label{thm:wellpos_tk}
Let $\kernel$ satisfy~\eqref{kernel_cond1},  then for $\xi\geq 0$ there exists a solution of~\eqref{min_problem_t}.  Furthermore,  for $\beta>0$,  $\xi>0$ and $\tt<{2\xi\mu}/((C_\eta^2+\beta\hat{C}_\eta^2)(1+C^2_I))$ or for $\beta=0$,  $\xi\geq 0$ and $\tt<\mu/(\ckerr(1+C_I^2)-\xi)$ the solution is unique.  Here,  the constants $\ckerr$, $C_\eta$, $\hat{C}_\eta$ and $C_I$ are as in~\eqref{cker},~\eqref{eq:kernel_molifiers} and~\eqref{eq:bound_exterior}, respectively, {and $\eta=\xi/(2+2C_I^2)$.  }
\end{theorem}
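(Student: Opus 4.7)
The plan is to exploit the explicit time-lagged coupling via $m(\t^{k-1})$, which decouples~\eqref{CH_VI_tk}: given $(\t^{k-1},\p^{k-1})$, first find $\p^k$ from the minimization~\eqref{min_problem} and then recover $\t^k$ from~\eqref{CH_VI_tk1}. The temperature step is a standard linear Neumann problem on $\Vt$, uniquely solvable for any $\tt>0$ by Lax--Milgram (its bilinear form is coercive and continuous on $H^1(\D)$), so the nontrivial work is entirely on the phase-field step.

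For existence I would apply the direct method of the calculus of variations on $\Vb\cong L^2(\D)$. The admissible set $\mathcal{K}$ is convex, $L^\infty$-bounded, and $L^2$-closed, hence weakly sequentially compact. The quadratic $\frac{\xi}{2}\norml{\p}^2$ with $\xi\geq 0$, the dual-norm term $\frac{\mu}{2\tt}\norm{\p-\p^{k-1}}^2_{\Va^\prime}$, and the linear term are handled by standard convex-continuous arguments; the only delicate piece is the indefinite form $-\frac12(\kernel*\p,\p)$. For a weakly convergent sequence $\p_n\rightharpoonup\p$ in $\mathcal{K}$, uniform $L^\infty$-boundedness combined with Proposition~\ref{prop:kernel_prop} (mollifying $\kernel$ by $\kernel_\eta\in W^{1,1}$ with uniform convergence of $\kernel_\eta*\p_n$) yields $(\kernel*\p_n,\p_n)\to(\kernel*\p,\p)$; hence $\J$ is weakly lower semicontinuous and a minimizer exists. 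The KKT conditions of this constrained quadratic program recover exactly~\eqref{CH_VI_tk_SP} by standard convex duality.

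For uniqueness I would establish strict convexity of $\J$ on $\mathcal{K}$ by showing
\[
D^2\J(\p)[v,v] \;=\; \xi\norml{v}^2 - (\kernel*v,v) + \frac{\mu}{\tt}\norm{v}^2_{\Va^\prime} \;>\; 0
\]
for $v\neq 0$ under the stated thresholds. In the case $\beta=0$, $\norm{v}_{\Va^\prime}=\norml{v}$, so Young's inequality together with the exterior estimate~\eqref{eq:bound_exterior} (yielding $\norml{v}^2_{L^2(\DD)}\leq(1+C_I^2)\norml{v}^2$) bounds $|(\kernel*v,v)|\leq\ckerr(1+C_I^2)\norml{v}^2$, giving strict convexity exactly under $\xi+\mu/\tt>\ckerr(1+C_I^2)$. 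In the case $\beta>0$ I would split $\kernel=\kernel_\eta+(\kernel-\kernel_\eta)$ via Proposition~\ref{prop:kernel_prop}: the rough remainder is absorbed into the $\xi\norml{v}^2$ term by choosing $\eta=\xi/(2+2C_I^2)$, and the smooth part is estimated via the $\Va$--$\Va^\prime$ duality $(\kernel_\eta*v,v)\leq\norm{\kernel_\eta*v}_{\Va}\norm{v}_{\Va^\prime}$. Decomposing $\norm{\kernel_\eta*v}^2_{\Va}=\beta\norml{(\nabla\kernel_\eta)*v}^2+\norml{\kernel_\eta*v}^2$ and applying Young's inequality with $\norm{\nabla\kernel_\eta}_{L^1}$ and $\norm{\kernel_\eta}_{L^1}$ (producing the constants $C_\eta$ and $\hat{C}_\eta$) combined with~\eqref{eq:bound_exterior}, one obtains the $\Va^\prime$-coercive bound that yields the stated threshold $\tt<2\xi\mu/((C_\eta^2+\beta\hat{C}_\eta^2)(1+C_I^2))$.

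The main obstacle is the $\beta>0$ uniqueness estimate: the duality step requires careful bookkeeping of the mollification parameter $\eta$, the $\Va$-norm decomposition of the smoothed convolution, and two applications of the exterior estimate~\eqref{eq:bound_exterior} on both sides of the pairing. Weak lower semicontinuity of the indefinite quadratic is the other subtle point, but it is fully resolved by the $L^\infty$-boundedness of $\mathcal{K}$ together with the smoothing provided by Proposition~\ref{prop:kernel_prop}.
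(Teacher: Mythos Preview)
Your proposal is correct and follows essentially the same route as the paper: decoupling via the explicit $m(\t^{k-1})$, Lax--Milgram for the temperature step, the direct method on the weakly compact set $\mathcal{K}$ for existence of the minimizer (the paper defers this to~\cite{BG2021CH}), and the same convolution-splitting estimate $\kernel=\kernel_\eta+(\kernel-\kernel_\eta)$ combined with the $\Va$--$\Va^\prime$ duality and~\eqref{eq:bound_exterior} for uniqueness. The only cosmetic difference is that you argue via strict convexity of the Hessian $D^2\J$, whereas the paper argues by contradiction at the level of the variational inequality (testing each solution against the other), but the resulting inequality $\xi\norml{U^k}^2-(\kernel*U^k,U^k)+\tfrac{\mu}{\tt}\norm{U^k}^2_{\Va^\prime}\leq 0$ and its treatment are identical.
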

\begin{proof}
Since the system~\eqref{min_problem_t} is a decoupled system of equations we can analyze existence and uniqueness of each variable separately.  For the temperature $\t$ existence and uniqueness follows a standard Lax-Milgram argument. 
Invoking a uniform boundedness on $m(\t^{k-1})$~\eqref{mt_condition} the proof of existence of the minimizer in~\eqref{min_problem_t} follows similar lines as in~\cite[Theorem 3.1]{BG2021CH} and we show only the proof for the uniqueness of the corresponding minimizer. 
First, let $\beta>0$ and setting $w^k=-(\mu/\tau)\G(\p^k-\p^{k-1})$, where $\G$ is defined as in~\eqref{greensVa_eq}, we can express~\eqref{CH_VI_tk2}--\eqref{CH_VI_tk3} as follows:
\begin{equation}
\left(\xi\p^k-\kernel*\p^k+\frac{\cpot}{2}-c_m m(\t^{k-1})+\frac{\mu}{\tt}\G(\p^{k}-\p^{k-1}),\zeta-\p^k\right)\geq 0. 
\end{equation}
Using a contradiction argument,  we assume $\p^k_1$ and $\p^{k}_2$ are two solutions, and $\pU^k=\p^k_1-\p^k_2$.  Now, taking $\zeta=\p_2^k$ for $\p_1^k$ and $\zeta=\p_1^k$ for $\p_2^k$ in the above variaitonal inequality,  adding both together and using~\eqref{greens_Va} leads to
\begin{align*}
\xi\norml{\pU^k}^2-(\kernel*\pU^k,\pU^k)+\frac{\mu}{\tt}(\G(\pU^k),\pU^k)=  \xi\norml{\pU^k}^2-(\kernel*\pU^k,\pU^k)+{\frac{\mu}{\tt}\norml{\pU^k}_{\Va^\prime}^2}\leq 0. 
\end{align*}
Using Young's and Cauchy inequalities we can bound the convolution by
\begin{multline*}
\norms{(\kernel*\pU^k,\pU^k)}\leq\norms{\duala{\kernel*\pU^k,\pU^k}}\\
\leq\norm{\kernel_{\eta}*\pU^k}_{\Va}\norm{\pU^k}_{\Va^\prime}+\norm{\kernel-\kernel_{\eta}}_{L^1(\R^n)}\norm{\pU^k}^2_{L^2(\D\cup\DI)}\\
\leq
\frac{\tt}{4\mu}\norm{\kernel_{\eta}*\pU^k}^2_{\Va}+\frac{\mu}{\tt}\norm{\pU^k}^2_{\Va^\prime}+ \norm{\kernel-\kernel_\eta}_{L^1(\R^n)}\norm{\pU^k}^2_{L^2(\D\cup\DI)}\\
=\left(\frac{\tt}{4\mu}\norm{\kernel_\eta}^2_{L^1(\R^n)}+\frac{\tt\beta}{4\mu}\norm{\nabla\kernel_{\eta}}^2_{L^1(\R^n)}+\norm{\kernel-\kernel_{\eta}}_{L^1(\R^n)}\right)\norm{\pU^k}^2_{L^2(\D\cup\DI)}+\frac{\mu}{\tt}\norm{\pU^k}^2_{\Va^\prime}.
\end{multline*}
Now, combining this with the previous estimate and using~\eqref{eq:bound_exterior} and~\eqref{eq:kernel_molifiers} we obtain
\begin{align*}
\left(\xi-\left(1+C_I^2\right)\left({\tt C_\eta^2}/{4\mu}+{\tt\beta\hat{C}_\eta^2}/{4\mu}+\eta\right)\right)\norm{U^k}^2\leq 0.
\end{align*}
Then,  choosing $\eta=\xi/(2+2C_I^2)$ and
$\tt<{2\xi\mu}/((C_\eta^2+\beta\hat{C}_\eta^2)(1+C^2_I))$
we obtain that $U^k=0$ and, hence, $\p^k_1=\p^k_2$. For $\beta=0$, $0\leq\xi<\ckerr(1+C_I^2)$ and $\tt<\mu/(\ckerr(1+C_I^2)-\xi)$ we obtain a uniqueness of the solution following similar arguments as above. 
\end{proof}

\subsection{Sharp interfaces}
In this section we study the conditions under which the phase-field variable can attain sharp interfaces. 

\begin{theorem}[Sharp interfaces, $\beta>0$]
Let $(\t^k,\p^k,w^k)$ be a solution of~\eqref{CH_VI_tk} and $\kernel$ satisfies~\eqref{kernel_cond1}. Then for $\xi>0$ and $\beta> 0$ at each time step the following projection formula holds true for the phase-field variable $\p^k$:
\begin{align}
&\p^k=P_{[0,1]}\left(\frac{1}{\xi}g^k\right)=
\begin{cases}
1&\text{if}\quad g^k\geq\xi,\\
0&\text{if}\quad g^k\leq 0,\\
g^k/\xi&\text{if}\quad g^k\in (0,\xi),
\end{cases}\label{proj_tk_CH}\\
\text{with} \quad &g^k:=w^k+\kernel*\p^k+\cm m(\t^{k-1})-\cpot/2.\label{eq:g_proj_regularity}
\end{align}
Moreover,  for $\xi=0$ and $\beta>0$ we have
\begin{align*}
\p^k\in \frac{1}{2}\left(1+{\rm sign}(g^k)\right)=
\begin{cases}
\{1\}\quad\text{if}\quad g^k>0,\\
\{0\}\quad\text{if}\quad g^k<0,\\
[0,1]\quad\text{if}\quad g^k=0,
\end{cases}
\end{align*}
where ${\rm sign}(0)=[0,1]$.  Furthermore,  if $|\{g^k\}|=0$ a.e.  in $\D$,  i.e., the set when $g^k=0$ has measure zero, then $\p^k$ can be discontinuous and attains only $\p^k\in\{0,1\}$.
\end{theorem}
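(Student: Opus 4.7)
The plan is to pass from the variational inequality to a pointwise identity and then carry out a case distinction on the value of $\p^k(x)$. Rewriting $B\p^k = \cker\p^k - \kernel*\p^k$ and using $\xi=\cker-\cpot$, the complementarity equation \eqref{CH_VI_tk_SP3} becomes
\begin{equation*}
\xi\p^k - g^k = \lambda_-^k - \lambda_+^k \quad \text{a.e.\ in } \D,
\end{equation*}
with $g^k$ as in \eqref{eq:g_proj_regularity}. Since test functions in $\Vb\cong L^2(\D)$ are arbitrary, this is a genuine a.e.\ identity. Because $w^k\in\Va$ (and, by Proposition~\ref{prop:kernel_prop}, $\kernel*\p^k\in C(\DD)$ as $\p^k\in L^\infty(\D)$), the right-hand side of \eqref{eq:g_proj_regularity} makes pointwise sense, and $\lambda_\pm^k\in\M\subset L^2(\D)$ by definition.

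Next, I would derive pointwise complementarity from \eqref{CH_VI_tk_SP4} in the same way as for \eqref{CH_VI_compl_split}: testing with $\eta=\lambda_+^k$ and $\eta=0$ gives $(\lambda_+^k,1-\p^k)=0$ with both factors nonnegative, hence $\lambda_+^k(x)(1-\p^k(x))=0$ a.e.; similarly $\lambda_-^k(x)\p^k(x)=0$ a.e. With these in hand, fix $x\in\D$ outside a null set and split into the three exhaustive cases allowed by the constraint $\p^k(x)\in[0,1]$:
\begin{itemize}
\item If $\p^k(x)=0$, then $\lambda_+^k(x)=0$, so the pointwise identity yields $-g^k(x)=\lambda_-^k(x)\geq 0$, i.e.\ $g^k(x)\leq 0$.
\item If $\p^k(x)=1$, then $\lambda_-^k(x)=0$, so $\xi - g^k(x)=-\lambda_+^k(x)\leq 0$, i.e.\ $g^k(x)\geq \xi$.
\item If $\p^k(x)\in(0,1)$, both multipliers vanish and $\xi\p^k(x)=g^k(x)$, forcing $g^k(x)\in(0,\xi)$ and $\p^k(x)=g^k(x)/\xi$.
\end{itemize}
Contrapositively, $g^k(x)\leq 0$ enforces $\p^k(x)=0$, $g^k(x)\geq\xi$ enforces $\p^k(x)=1$, and $g^k(x)\in(0,\xi)$ forces $\p^k(x)=g^k(x)/\xi$; this is exactly the projection formula \eqref{proj_tk_CH} onto $[0,1]$ of the scaled argument $g^k/\xi$.

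The case $\xi=0$ with $\beta>0$ is identical except that the pointwise identity becomes $g^k=\lambda_+^k-\lambda_-^k$: the same three-case analysis then gives $g^k(x)>0\Rightarrow\p^k(x)=1$, $g^k(x)<0\Rightarrow\p^k(x)=0$, and $\p^k(x)\in(0,1)\Rightarrow g^k(x)=0$, which is the multivalued $\frac{1}{2}(1+\mathrm{sign}(g^k))$ statement. When the level set $\{g^k=0\}$ has zero Lebesgue measure, the middle case is irrelevant a.e., so $\p^k\in\{0,1\}$ a.e.; wherever $g^k$ changes sign, $\p^k$ must jump between $0$ and $1$, so $\p^k$ is genuinely discontinuous.

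The main technical hurdle is the passage from the integral complementarity statements to their pointwise counterparts: one must verify that the $L^2$ inequalities in \eqref{CH_VI_tk_SP4} translate into a.e.\ complementarity, and that $g^k$ is well defined pointwise a.e., relying on the $\Va$-regularity of $w^k$, Proposition~\ref{prop:kernel_prop} for continuity of $\kernel*\p^k$, and the Lipschitz bound \eqref{mt_condition} on $m$. Once this pointwise framework is in place, the case analysis itself is routine obstacle-problem reasoning.
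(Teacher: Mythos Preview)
Your proposal is correct and follows the standard obstacle-problem route one expects here: pass from the weak complementarity system~\eqref{CH_VI_tk_SP3}--\eqref{CH_VI_tk_SP4} to the pointwise identity $\xi\p^k-g^k=\lambda_-^k-\lambda_+^k$ together with $\lambda_+^k(1-\p^k)=\lambda_-^k\p^k=0$ a.e., and then exhaust the three cases for $\p^k(x)$. The paper itself omits the argument and defers to~\cite{BG2021CH}, whose strategy is precisely this one, so your write-up is essentially the missing proof.
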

\begin{proof}
The proof follows similar strategy as in~\cite{BG2021CH} and we skip it for brevity. 
\end{proof}

\begin{corollary}[Projection formula, $\beta=0$]\label{corr:sharp_interfacesAC}
For the Allen-Cahn case ($\beta=0$) and for $\xi\geq 0$ a phase-field variable $\p^k$ also admits a representation formula:
\begin{equation}
\p^k:=P_{[0,1]}\left(\frac{1}{\mu/\tt+\xi}g^k\right)\quad\text{with}\quad g^k:=\frac{\mu}{\tt}\p^{k-1}+\kernel*\p^k+\cm m(\t^{k-1})-\frac{\cpot}{2}.\label{proj_tk_AC}
\end{equation}
{However,  since $\mu/\tt+\xi> 0$,  no sharp interfaces with a jump-discontinuity can occur in this case, unless the solution at a previous time step already has a jump discontinuity. 
Furthermore,  in the steady-state the corresponding solution admits:
\begin{align*}
\p= P_{[0,1]}\left(\frac{1}{\xi}\left(\kernel*\p-{\cpot}/{2}+\cpot m(\t)\right)\right),\quad\xi>0.
\end{align*}
Then,  if $\xi=0$ and $|\left\{\kernel*\p-{\cpot}/{2}+\cpot m(\t)\right\}|=0$ almost everywhere,  the solution $\p$ can admit only pure phases, $\p\in\{0,1\}$.}
\end{corollary}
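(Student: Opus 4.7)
The plan is to derive the projection formula~\eqref{proj_tk_AC} directly from the KKT system for the time-discrete problem and then to read off the remaining assertions from the structure of the resulting pointwise variational inequality. At $\beta=0$ the space $\Va$ collapses to $L^2(\D)$ and $\A$ becomes the identity, so that equation~\eqref{CH_VI_tk2} reduces to the algebraic identity $\mu(\p^k-\p^{k-1})+\tt\,w^k=0$. Eliminating $w^k$ and substituting into~\eqref{CH_VI_tk3}, while using $B\p=\cker\p-\kernel*\p$ and $\xi=\cker-\cpot$, yields
\begin{equation*}
\bigl((\xi+\mu/\tt)\p^k-g^k,\,\zeta-\p^k\bigr)\ge 0\quad\forall\,\zeta\in\mathcal{K},
\end{equation*}
with $g^k$ as in~\eqref{proj_tk_AC}. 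Since no differential operator acts on $\p^k$, this is a pointwise obstacle-type inequality in $L^2(\D)$, and the classical characterization of the metric projection onto $[0,1]$ then delivers~\eqref{proj_tk_AC}.

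For the ``no new jumps'' assertion I would observe that $\xi+\mu/\tt>0$ together with continuity of $P_{[0,1]}$ implies that $\p^k$ inherits the regularity of $g^k$. By Proposition~\ref{prop:kernel_prop}, $\kernel*\p^k\in C(\DD)$ since $\p^k\in\mathcal{K}\subset L^\infty$; by~\eqref{mt_condition}, $m$ is Lipschitz, so $\cm m(\t^{k-1})$ inherits whatever continuity $\t^{k-1}$ possesses (which is ensured by the parabolic smoothing in~\eqref{CH_VI_tk1}). Consequently the only term of $g^k$ capable of producing a jump is $(\mu/\tt)\p^{k-1}$, proving that discontinuities must be inherited from the previous time step rather than spontaneously created.

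For the steady-state part, setting $\partial_t\p=0$ forces $w=0$, and the limit of~\eqref{CH_VI_tk3} reduces to
\begin{equation*}
(\xi\p-\kernel*\p+\cpot/2-\cpot m(\t),\,\zeta-\p)\ge 0\quad\forall\,\zeta\in\mathcal{K}.
\end{equation*}
For $\xi>0$ the same projection argument yields the announced closed form. For $\xi=0$, setting $h:=\kernel*\p-\cpot/2+\cpot m(\t)$ recasts the inequality as $(h,\p)\ge(h,\zeta)$ for every $\zeta\in\mathcal{K}$, so $\p$ pointwise maximizes a linear functional on the box $[0,1]$. The only mildly delicate step is the resulting bang--bang selection: testing against $\zeta=\mathbf{1}_E$ for arbitrary measurable $E\subset\{h>0\}$ (and the analogous variation on $\{h<0\}$) forces $\p=1$ a.e.\ on $\{h>0\}$ and $\p=0$ a.e.\ on $\{h<0\}$; the hypothesis $|\{h=0\}|=0$ then exhausts $\D$ up to a null set and yields $\p\in\{0,1\}$ a.e.
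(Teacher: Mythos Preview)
Your proof is correct and follows exactly the strategy the paper intends: at $\beta=0$ eliminate $w^k$ via the algebraic relation $w^k=-(\mu/\tt)(\p^k-\p^{k-1})$, reduce~\eqref{CH_VI_tk3} to a pointwise obstacle inequality, and read off the projection formula; the steady-state reduction and the bang--bang argument for $\xi=0$ are likewise the natural ones. The paper does not spell out a separate proof for this corollary but relies on the same projection characterization as in the preceding theorem and~\cite{BG2021CH}; one small caveat is that your appeal to ``parabolic smoothing'' for continuity of $m(\t^{k-1})$ is heuristic in dimensions $n\geq 2$ (since $\t^{k-1}\in H^1(\D)$ need not be continuous), but this matches the informal nature of the ``no new jumps'' remark in the statement itself.
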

\begin{remark}
We note that the projection formula~\eqref{proj_tk_CH} or~\eqref{proj_tk_AC} provides a useful insight into stability and regularity properties of the solution. Furthermore, for $\beta=0$ (the Allen-Cahn case), 
using explicit discretization of the convolution and coupling terms, the formula~\eqref{proj_tk_AC} can be used to directly evaluate the solution $\p^k$, provided $\kernel*\p^{k-1}$, without the need to solve the corresponding nonlocal system (see Section~\ref{sec:discretization}).  
\end{remark}

\begin{theorem}[Higher regularity]
Let $(\t^k,\p^k,w^k)$ be the solution of~\eqref{CH_VI_tk} and $\kernel$ satisfies~\eqref{kernel_cond1} and $\kernel\in W^{1,1}(\R^n)$. Then if $\xi>0$ and $u_0\in H^1(\D)$ (for $\beta = 0$), we obtain that $\p^k\in H^1(\D)$ for all $k=1,\dots,K$, $\beta\geq 0$, and the following holds
\begin{equation}
\norml{\nabla\p^k}\leq \frac{C}{\xi}\norm{g^k}_{H^1(\D)},
\label{reg_u_tk}
\end{equation}
where $C = 1$ for $\beta>0$ and $C=\tt\xi/(\mu+\tt\xi)<1$ for $\beta=0$, and $g^k$ is defined as in~\eqref{eq:g_proj_regularity} and~\eqref{proj_tk_AC} for $\beta>0$ and $\beta=0$, respectively. 
Additionally, for $\xi\geq 0$ and $\beta>0$ we also have that $\lambda_{\pm}^k$ in~\eqref{CH_VI_tk_SP} is in $H^1(\D)$ for $k=1,\dots,K$. 
\end{theorem}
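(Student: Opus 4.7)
The core idea is that the projection formulas~\eqref{proj_tk_CH} and~\eqref{proj_tk_AC} express $\p^k$ as a Lipschitz truncation $P_{[0,1]}$ applied to an explicit function $g^k/c$, with $c=\xi$ for $\beta>0$ and $c=\mu/\tt+\xi$ for $\beta=0$. By Stampacchia's truncation lemma, $P_{[0,1]}\colon H^1(\D)\to H^1(\D)$ is well-defined and non-expansive on the gradient, since the weak gradient of $P_{[0,1]}(v)$ equals $\chi_{\{0<v<1\}}\nabla v$. Therefore, once $g^k\in H^1(\D)$ is established, the estimate~\eqref{reg_u_tk} follows with constant $1/c$, which equals $1/\xi$ for $\beta>0$ and $\tt/(\mu+\tt\xi)=C/\xi$ with $C<1$ for $\beta=0$.

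The remaining work is to prove $g^k\in H^1(\D)$. The plan is to bound each constituent of $g^k$ separately. The convolution $\kernel*\p^k$ has weak gradient $(\nabla\kernel)*\p^k$; by Young's inequality, the assumption $\kernel\in W^{1,1}(\R^n)$ and $\p^k\in\mathcal{K}\subset L^{\infty}(\D)\cap L^2(\DD)$ yield $\kernel*\p^k\in H^1(\D)$. For the nonlinear coupling $m(\t^{k-1})$, the uniform Lipschitz bound~\eqref{mt_condition} combined with $\t^{k-1}\in\Vt=H^1(\D)$, which is standard for the linear elliptic equation~\eqref{CH_VI_tk1}, gives $m(\t^{k-1})\in H^1(\D)$ via the chain rule. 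Constants contribute nothing to the gradient, and in the case $\beta>0$ one additionally has $w^k\in\Va\cong H^1(\D)$ by~\eqref{eq:Va_H1_relation}. Collecting the terms in~\eqref{eq:g_proj_regularity} yields $g^k\in H^1(\D)$, which together with the first paragraph completes the $\beta>0$ case.

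For $\beta=0$, the expression for $g^k$ in~\eqref{proj_tk_AC} contains $\p^{k-1}$ in place of $w^k$, so I would proceed by induction on $k$. The base case $\p^0=u^0\in H^1(\D)$ is the hypothesis, and the inductive step is identical to the previous paragraph: assuming $\p^{k-1}\in H^1(\D)$, one obtains $g^k\in H^1(\D)$, and the projection formula delivers $\p^k\in H^1(\D)$ with the claimed bound. For the Lagrange multipliers with $\beta>0$, solving~\eqref{CH_VI_tk_SP3} for $\lambda^k$ gives $\lambda^k=g^k-\xi\p^k$, which lies in $H^1(\D)$ for any $\xi\geq 0$; the case $\xi=0$ is immediate because the term $\xi\p^k$ drops out and no regularity of $\p^k$ is needed. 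The complementarity relations $\lambda_+^k(1-\p^k)=0$ and $\lambda_-^k\p^k=0$, together with $\lambda_\pm^k\geq 0$, force $\lambda_+^k$ and $\lambda_-^k$ to have disjoint supports in $\{\p^k=1\}$ and $\{\p^k=0\}$, so that $\lambda_+^k=(\lambda^k)_+$ and $\lambda_-^k=(\lambda^k)_-$; since $H^1(\D)$ is a vector lattice, both parts lie in $H^1(\D)$.

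The main subtlety is the Allen--Cahn case, where $g^k$ itself depends on $\p^k$ through the convolution. This self-reference is harmless: $\kernel*\p^k$ requires only $\p^k\in L^2(\DD)$, guaranteed by $\p^k\in\mathcal{K}$, to deliver an $H^1(\D)$-function via the $W^{1,1}$ kernel, so no circular regularity argument is required. The induction on $k$ is needed solely to pass the $H^1$-regularity through the explicit $\p^{k-1}$ term coming from the discrete time derivative.
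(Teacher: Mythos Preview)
Your proposal is correct and follows essentially the same approach as the paper: both arguments establish $g^k\in H^1(\D)$ term by term (convolution via Young's inequality with $\kernel\in W^{1,1}$, coupling term via the Lipschitz bound on $m$, and $w^k\in\Va\cong H^1$ when $\beta>0$), then invoke the $H^1$-stability of the truncation $P_{[0,1]}$ to conclude, with induction on $k$ handling the $\beta=0$ case. Your treatment of the Lagrange multipliers is in fact more complete than the paper's: the paper only writes out $\lambda^k\in H^1(\D)$ from the explicit formula $\lambda^k=-\xi\p^k+\kernel*\p^k-\cpot/2+\cm m(\t^{k-1})+w^k$ and does not address the individual parts $\lambda_\pm^k$, whereas you correctly observe that the complementarity conditions force $\lambda_\pm^k=(\lambda^k)_\pm$ and then use the lattice property of $H^1(\D)$; you also handle the $\xi=0$ case cleanly by noting that the potentially irregular term $\xi\p^k$ simply vanishes.
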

\begin{proof}
Since $\p^k\in L^{\infty}(\R^n)$ and $\kernel\in W^{1,1}(\R^n)$ by Young's inequality we have that $\kernel*\p^k\in W^{1,\infty}(\R^n)$. Furthermore, since $\t^{k-1}\in H^1(\D)$,  from~\eqref{mt_condition} it follows that
\[
\norml{\nabla m(\t^{k-1})}\leq\norm{m^\prime(\t^{k-1})}_{L^\infty(\D)}\norml{\nabla\t^{k-1}}\leq C,
\]
and, hence, $m(\t^{k-1})\in H^1(\D)$. Now, let $\beta=0$ and $k=1$ then $g^1\in H^1(\D)$ and by the stability of the $L^2-$projection in $H^1$ it directly follows that 
\[
\norml{\nabla\p^1}\leq (1/\tt+\xi)^{-1}\norml{\nabla g^1}\leq C,\quad C>0,
\] 
and hence $\p^1\in H^1(\D)$. Applying an induction argument it is easy to see that $\p^{k}\in H^1(\D)$. The proof for $\beta>0$ follows similar lines. 
The regularity of the Lagrange multiplier $\lambda^k\in H^1(\D)$ for $\beta>0$ follows directly by invoking the regularities of $\p^k, w^k,\t^{k-1}\in H^1(\D)$ and the fact that 
$\lambda^k=-\xi\p^k+\kernel*\p^k-\cpot/2+\cm m(\t^{k-1})+w^k$.
\end{proof}

\section{Continuous problem}\label{sec:continuous_problem}
In this section, we establish the existence of the solution of problem~\eqref{CH_VI} by using a Rothe method and studying the limit $\tt\to 0$ in~\eqref{CH_VI_tk}. We also discuss the conditions when sharp interfaces occur. 

\subsection{Interpolants}
Let ${X}$ be either $\Vb$, $L^2(\D)$, $\Va$ or $\Vt$ and  
for a given sequence of functions $\{z^k\}_{k=1}^K\subset X$ we introduce 
piecewise constant and piecewise linear interpolants:  
\begin{equation}
\begin{aligned}
\inp{z}(t):=&\ z^k,\quad \inpl{z}(t):=z^{k-1}, &t\in(t_{k-1},t_k],\quad k=1,\dots,K,\\
\inpt{z}(t):=&\ \frac{t-t_{k-1}}{\tt} z^{k}+\frac{t_k-t}{\tt}z^{k-1}, &t\in[t_{k-1},t_k],\quad k=1,\dots,K,
\end{aligned}\label{eq:t_interpolant}
\end{equation}
where $\tt=\tend/K$. Then, $\partial_t\inpt{z}=(z^{k}-z^{k-1})/\tt$, $t\in[t_{k-1},t_k]$, $k=1,\dots,K$ and we have
\begin{align*}
\norm{\inp{z}}^2_{L^2(0,\tend;X)}=\tt\sum_{k=1}^K\norm{z^k}_{X}^2,\quad
\norm{\inpl{z}}^2_{L^2(0,\tend;X)}=\tt\sum_{k=0}^{K-1}\norm{z^k}_{X}^2,\\
\norm{\partial_t\inpt{z}}^2_{L^2(0,\tend;X)}=\tt\sum_{k=1}^K\norm{(z^k-z^{k-1})/\tt}_{X}^2=\frac{1}{\tt}\sum_{k=1}^K\norm{z^k-z^{k-1}}_{X}^2.
\end{align*}
The following holds true for the above interpolants~\eqref{eq:t_interpolant}, (see Proposition~3.9 in~\cite{colli2019well}):
\begin{align}
&\norm{\inpt{z}}^2_{L^2(0,\tend;Z)}\leq \tt\norm{z_0}^2_{Z}+2\norm{\inp{z}}^2_{L^2(0,\tend;Z)},\label{prop_interp1}\\
&\norm{\inp{z}-\inpt{z}}^2_{L^2(0,\tend;Z)}=\frac{\tt^2}{3}\norm{\partial_t\inpt{z}}^2_{L^2(0,\tend;Z)},\label{prop_interp2}
\end{align}
where $Z=H^1(\D)$ or $Z=L^2(\D)$.  We also make use of the following property
\begin{multline}
\norm{\inpl{z}-\inpt{z}}^2_{L^2(\Q)}\leq \tt\norm{\partial_t\inpt{z}}^2_{L^2(0,\tend;(H^1(\D))^\prime)}
+\tt\norm{\inp{z}}^2_{L^2(0,\tend;H^1(\D))}+\tt^2\norm{z^0}_{H^1(\D)}^2,\label{prop_interp3}
\end{multline}
which follows directly by realizing that
\begin{multline*}
\norm{\inpl{z}-\inpt{z}}^2_{L^2(\Q)}=\sum_{k=1}^K\int_{t_{k-1}}^{t_k}\frac{(t-t_{k-1})^2}{\tt^2}\norml{z^{k-1}-z^k}^2\d t=\frac{\tt}{3}\sum_{k=1}^K\norml{z^{k-1}-z^k}^2\\
=\frac{\tt^2}{3}\sum_{k=1}^K\dualh{\frac{z^{k}-z^{k-1}}{\tt},z^k-z^{k-1}}
\leq \frac{\tt^2}{3}\sum_{k=1}^K\norm{\partial_t\inpt{z}}_{(H^1(\D))^\prime}\norm{z^{k}-z^{k-1}}_{H^1(\D)}\\
\leq \tt\norm{\partial_t\inpt{z}}^2_{L^2(0,\tend;(H^1(\D))^\prime)}+\tt\norm{\inp{z}}^2_{L^2(0,\tend;H^1(\D))}+\tt^2\norm{z^0}_{H^1(\D)}^2.
\end{multline*}
\subsection{Existence of a solution}
Now, we can rewrite the system\eqref{CH_VI_tk1}--\eqref{CH_VI_tk2},~\eqref{CH_VI_tk_SP} in terms of the interpolants as follows
\begin{subequations}
\begin{align}
(\partial_t\inpt{\t}(t),\phi)+D\inner{\nabla\inp{\t}(t),\nabla\phi} -L\inner{\partial_t\inpt{\p}(t),\phi}&=0,\quad\forall\phi\in\Vt\\
{\mu}\inner{\partial_t \inpt{\p}(t),\psi}+\inner{\inp{w}(t),\psi}+\beta\inner{\nabla\inp{w}(t),\nabla\psi}&=0,\quad\forall\psi\in\Va,\label{CH_VI_tk_int_b}\\
\inner{B\inp{\p}(t)-\cpot\inp{\p}(t)+{\cpot}/{2}-{\cm m(\inpl{\t}(t))} - \inp{w}(t)+\inp{\lambda}(t),\zeta}&=0,\quad\forall\zeta\in\Vb,\label{CH_VI_tk_int_c}\\
\inner{\eta-\inpp{\lambda}(t),1-\inp{\p}(t)}\geq 0,\quad \inner{\eta-\inpm{\lambda}(t),\inp{\p}(t)}&\geq 0,\quad\forall\eta\in\M.
\label{CH_VI_tk_int_d}
\end{align}\label{CH_VI_tk_int}\end{subequations}
By analyzing the system above for $\tt\to 0$ we derive the following existence result.

\begin{theorem}
Let $\kernel\in W^{1,1}(\R^n)$ and satisfies~\eqref{kernel_cond1},  and let $\t^0\in\Vt$,  $\p^0\in\mathcal{K}$. Then for $\xi\geq 0$, 
$0\leq \beta\leq\min\left\{\frac{\mu D}{2\cpot C_m L},\frac{\mu^2 D^2}{4\cpot^2C_P^2C_m^2L^2}\right\}$ there exist a solution quadruple $(\t,\p, w,\lambda)$ solving~\eqref{CH_VI_time} and fulfilling the regularity 
\begin{align*}
\t\in  L^2(0,\tend;\Vt),\quad \p \in L^\infty(\QC),\quad w\in L^2(0,\tend;\Va),\quad \lambda\in L^2(0,\tend;\Va),\\
\text{and}\qquad \partial_t\t\in L^2(0,\tend; \Vt^\prime),\quad \partial_t\p\in L^2(0,\tend;\Va^\prime),
\end{align*}
where we recall $L^2(0,\tend;\Va)\cong L^2(0,\tend;H^1(\D))$ for $\beta>0$.
Additionally,  if i) $\xi>0$,  $\beta>0$ or  ii) $\xi\geq 0$,  $\beta=0$,  and $\p^0 \in H^1(\D)$,  then $\p$ admits an improved regularity
\begin{equation}
\p\in L^\infty(0,\tend;H^1(\D))\cap L^\infty(\QC).
\label{eq:higher_regularity_p}
\end{equation}
\end{theorem}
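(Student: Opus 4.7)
The plan is to follow the Rothe method already foreshadowed in the paper: pass to the limit $\tt\to 0$ in the interpolated semi-discrete system \eqref{CH_VI_tk_int}, which is well-posed by Theorem~\ref{thm:wellpos_tk}. The first task is to derive $\tt$-uniform a priori bounds on the discrete iterates $(\t^k,\p^k,w^k,\lh^k)$ and their interpolants. I would test \eqref{CH_VI_tk1} with $\t^k$ and \eqref{CH_VI_tk2} with $w^k$, and use the variational inequality \eqref{CH_VI_tk3} against $\zeta=\p^{k-1}\in\mathcal{K}$ to extract an energy-type identity for the phase field (exploiting that the functional in \eqref{min_problem} is bounded below uniformly thanks to $0\le \p^k\le 1$ and \eqref{mt_condition}). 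Summing in $k$ and using the discrete chain rule for $\norm{\p^k-\p^{k-1}}_{\Va^\prime}^2$ yields
\begin{equation*}
\norm{\inp{\t}}^2_{L^2(0,\tend;\Vt)}+\norm{\inp{w}}^2_{L^2(0,\tend;\Va)}+\tt\sum_{k}\norm{\partial_t\inpt{\p}}_{\Va^\prime}^2\le C,
\end{equation*}
together with $\norm{\inp{\p}}_{L^\infty(\QC)}\le 1$ from $\p^k\in\mathcal{K}$ and a $\Vt$-bound on $\inpt{\t}$ via \eqref{prop_interp1}.

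\textbf{The main obstacle} is the coupling: the term $L\inner{\partial_t\inpt{\p},\t}$ in the temperature equation is controlled by $\norm{\partial_t\inpt{\p}}_{\Va^\prime}\norm{\t}_{\Va}$, which mixes an $H^1$-norm of $\t$ with a dual-norm of $\partial_t\p$. Absorbing this into the $D\norml{\nabla\t}^2$ coercivity requires Young's inequality together with the Poincar\'e inequality \eqref{eq:poincare_neumann} and the equivalence \eqref{eq:Va_H1_relation}, and produces precisely a smallness condition on $\beta$. Tracking the constants (using $|m|\le 1/2$ and $|m^\prime|\le C_m$ to control the $m(\t)$-dependent terms that enter the phase-field energy) will yield the stated bound $\beta\le\min\{\mu D/(2\cpot C_m L),\mu^2 D^2/(4\cpot^2 C_P^2 C_m^2 L^2)\}$. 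From the phase-field equation I then recover the $V_A^\prime$-bound on $\partial_t\inpt{\p}$ and, by duality in \eqref{CH_VI_tk1}, the $\Vt^\prime$-bound on $\partial_t\inpt{\t}$. The $\lambda$-bound in $L^2(0,T;\Va)$ is obtained by reading off $\lh$ from the algebraic identity \eqref{CH_VI_tk_int_c} once all other terms are controlled in that space, using that $\kernel*\inp{\p}$ is uniformly bounded in $L^\infty$ by Young's convolution inequality.

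\textbf{Passage to the limit.} The bounds above, with \eqref{prop_interp1}--\eqref{prop_interp3}, give weak(-$*$) convergence along a subsequence of $\inp{\t},\inpt{\t}\rightharpoonup \t$ in $L^2(0,T;\Vt)$, $\inp{\p},\inpt{\p}\rightharpoonup^* \p$ in $L^\infty(\QC)$, $\inp{w}\rightharpoonup w$ in $L^2(0,T;\Va)$, and $\inppm{\lambda}\rightharpoonup\lh_{\pm}$ in $L^2(Q)$ with limit in the cone $L^2_+(Q)$ (cones are weakly closed). An Aubin--Lions argument applied to the temperature and to $\inpt{\p}$ (using $\partial_t\inpt{\p}\in L^2(V_A^\prime)$ and $\inp{\p}\in L^\infty$ together with the identity \eqref{prop_interp2}) provides strong $L^2(Q)$-convergence of $\inp{\t}$ and $\inpl{\t}$ to $\t$, which is exactly what is needed to pass to the limit in the nonlinear coupling $m(\inpl{\t})\to m(\t)$ via the Lipschitz property \eqref{mt_condition}. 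The convolution term $\kernel*\inp{\p}\to\kernel*\p$ passes to the limit by Proposition~\ref{prop:kernel_prop} together with the weak-$*$ convergence of $\inp{\p}$. The complementarity inequalities \eqref{CH_VI_tk_int_d} pass to the limit by combining strong $L^2$-convergence of $\inp{\p}$ and weak $L^2$-convergence of $\inppm{\lambda}$; the identity \eqref{prop_interp3} justifies replacing $\inpl{\t}$ by $\t$ and $\inp{\p}$ by $\p$ in the limit. The initial conditions are recovered from the continuity of the interpolants and the bounds on the time derivatives.

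\textbf{Improved regularity.} For case (i) ($\xi>0,\beta>0$) the projection formula \eqref{proj_tk_CH}--\eqref{eq:g_proj_regularity} gives $\p^k=P_{[0,1]}(g^k/\xi)$, and since $P_{[0,1]}$ is a $1$-Lipschitz contraction on $H^1(\D)$ one obtains $\norml{\nabla\p^k}\le\xi^{-1}\norml{\nabla g^k}$; the right-hand side is uniformly bounded once $\nabla\kernel\in L^1$ gives $\kernel*\p^k\in W^{1,\infty}$ and $w^k,\t^{k-1}\in H^1$ have uniform $H^1$-bounds (by an induction that propagates the $H^1$-bound through the temperature equation). This yields $\inp{\p}$ uniformly bounded in $L^\infty(0,T;H^1(\D))$, and a Banach--Alaoglu argument passes it to the weak-$*$ limit in that space. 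Case (ii) ($\beta=0$) is analogous via \eqref{proj_tk_AC}, using $\p^0\in H^1(\D)$ as the base case for the induction with the $\tt$-uniform contraction factor $\tt\xi/(\mu+\tt\xi)<1$ noted in \eqref{reg_u_tk}.
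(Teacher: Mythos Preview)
Your overall strategy coincides with the paper's: Rothe method, discrete energy bounds, Aubin--Lions compactness, and the projection formula for regularity. Two points need correction.

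\textbf{Origin of the $\beta$-smallness.} Testing \eqref{CH_VI_tk1} with $\t^k$ directly leaves an uncontrolled $\norml{\t^k}^2$ term; the paper instead tests with the zero-mean projection $P(\t^k)=\t^k-\t^{k,\D}$, so that Poincar\'e \eqref{eq:poincare_neumann} converts $\norml{P(\t^k)}$ into $C_P\norml{\nabla\t^k}$. More importantly, your description locates the smallness condition in the absorption of the coupling term into $D\norml{\nabla\t}^2$, but in fact that absorption succeeds for \emph{any} $\beta$ by choosing the Young parameter appropriately. The genuine constraint arises on the phase-field side: the energy inequality (your $\zeta=\p^{k-1}$ test, equivalently $J_k(\p^k)\le J_k(\p^{k-1})$) produces the term $\norm{c_F-2c_m m(\inpl{\t})}_{\Va}^2$, whose $\Va$-norm carries $\beta\norml{\nabla m(\inpl{\t})}^2\le \beta C_m^2\norml{\nabla\inpl{\t}}^2$. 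Substituting the temperature-gradient bound---which itself contains $\norm{\partial_t\inpt{\p}}_{\Va^\prime}^2$ with coefficient $L^2(\beta+C_P^2)/D^2$---yields a term proportional to $\beta(\beta+C_P^2)$ that must be reabsorbed into the left-hand $\tfrac{\mu}{2}\norm{\partial_t\inpt{\p}}_{\Va^\prime}^2$. This is what forces $\beta(\beta+C_P^2)<\mu^2D^2/(4c_F^2C_m^2L^2)$.

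\textbf{Compactness of $\inp{\p}$.} Your Aubin--Lions step for $\inpt{\p}$ is not valid as written: the embedding $L^\infty(\D)\hookrightarrow L^2(\D)$ is \emph{not} compact, so the triple $L^\infty(\D)\hookrightarrow L^2(\D)\hookrightarrow\Va^\prime$ does not yield strong $L^2(Q)$-convergence, and without it you cannot pass to the limit in the product $(\inppm{\lambda},\,\cdot\,)$ in \eqref{CH_VI_tk_int_d}. The paper's remedy is that the $L^\infty(0,T;H^1(\D))$ bound on $\inp{\p}$---which you derive separately as ``improved regularity'' via the projection formulas \eqref{proj_tk_CH} and \eqref{proj_tk_AC}---is in fact used \emph{inside} the existence proof to supply the compact embedding $H^1(\D)\hookrightarrow\hookrightarrow L^2(\D)$ needed for Aubin--Lions on $\inpt{\p}$. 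So the order of the argument matters: first establish the uniform $H^1$-bound on $\inp{\p}$ (by induction on $k$, as you outline), then invoke compactness, then pass to the limit in the complementarity system. For the residual case $\beta>0$, $\xi=0$, where no $H^1$-bound on $\p$ is available, the paper defers to \cite{BG2021CH}; there the algebraic identity \eqref{eq:proof1_lagrange_mult} gives $\inp{\lambda}\in L^2(0,T;H^1(\D))$, and compactness is extracted on the multiplier side instead.
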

\begin{proof}
For compactness of the presentation in some cases we will use a generic constant $C>0$ in the estimates,  which is independent of $\tt$. The inclusion $\p\in L^\infty(\QC)$ follows immediately from the fact that $0\leq \inp{\p}(t)\leq 1$ for a.e. $t\in(0,\tend)$. 

\begin{itemize}
\item \emph{Estimates for the time derivative of the phase-field variable $\partial_t\inpt{\p}$.}
\end{itemize}

First, we consider the following estimate for the discrete time-derivative,
\begin{multline*}
\frac{\mu}{\tt}\norm{\p^k-\p^{k-1}}^2_{\Va^\prime}\leq
{\xi}\norm{\p^{k-1}}^2-{\xi}\norm{\p^{k}}^2+\inner{\kernel*\p^{k},\p^k}-\inner{\kernel*\p^{k-1},\p^{k-1}}\\
+
\inner{\cpot-2\cm m(\t^{k-1}),\p^{k-1}-\p^k},
\end{multline*}
which is obtained from the fact that $\p^k$ is a solution of~\eqref{min_problem} and $\J(\p^k)\leq \J(\p^{k-1})$, $k=1,\dots,K$.
From the above expression using Young's inequality we further obtain
\begin{multline}
\mu\norm{\partial_t\inpt{\p}}^2_{L^2(0,T;\Va^\prime)}=
\frac{\mu}{\tt}\sum_{k=1}^K\norm{\p^{k}-\p^{k-1}}_{\Va^\prime}^2
\leq \xi\norm{\p^0}^2-\xi\norm{\p^K}^2\\
+\norm{\p^0}\norm{\gamma*\p^0}+\norm{\p^K}\norm{\gamma*\p^K}+\sum_{k=1}^K\norms{\duala{\cpot-2\cm m(\t^{k-1}),\p^{k-1}-\p^k}}\\
\leq(\xi+\ckerr)\norm{\p^0}_{L^2(\DD)}^2+\ckerr\norm{\p^K}^2_{L^2(\DD)}\\
+{\frac{\mu}{2}}\norm{\partial_t\inpt{\p}}^2_{L^2(0,T;\Va^\prime)}
+{\frac{\cpot^2}{2\mu}}\norm{1-2 m(\inpl{\t})}_{L^2(0,\tend;\Va)}^2,
\label{eq:proof_dt_estimate}
\end{multline}
where $\ckerr$ as in~\eqref{cker}. Next, we estimate the last term in~\eqref{eq:proof_dt_estimate}:
\begin{multline}
\norm{1-2 m(\inpl{\t})}^2_{L^2(0,\tend;\Va)}
=
\norm{1-2 m(\inpl{\t}(t))}^2_{L^2(\Q)}+4\beta\norm{\nabla m(\inpl{\t})}_{L^2(\Q)}^2\\
\leq C+4\beta C_m^2\left(\tt\norml{\nabla\t^0}^2+\norm{\nabla\inp{\t}}^2_{L^2(\Q)}\right),\label{eq:proof_est1}
\end{multline}
where in the last estimate we have used~\eqref{mt_condition} and the relationship
\begin{align}
\norm{\nabla\inpl{\t}}^2_{L^2(\Q)}\leq\tt\sum_{k=1}^{K}\norml{\nabla\t^k}^2+\tt\norml{\nabla\t^0}^2= \norm{\nabla\inp{\t}}^2_{L^2(\Q)}+\tt\norml{\nabla\t^0}^2.\label{eq:t_interp_rel}
\end{align}
To bound the last term in~\eqref{eq:proof_est1} we introduce the orthogonal projection $\pt:L^2(\D)\to L^2(\D)$, $\pt(v):=v-v^{\D}$, where $v^{\D}:=\frac{1}{|\D|}\int_{\D} v\d\x$. 
We also have stability estimates:
\begin{equation}
\norm{\pt(v)}_{\mathcal{X}}\leq\norm{v}_{\mathcal{X}},\quad\forall v\in\mathcal{X},\label{eq:proj_stability}
\end{equation}
where $\mathcal{X}$ is either $L^2$, $\Va$ or $\Va^\prime$.  For $\mathcal{X}\in\{L^2, \Va\}$ the property follows directly,  while for $\Va^\prime$ this is obtained by a duality argument. Then, by the linearity of the projection operator and the fact that $\nabla P(\t^k)=\nabla\t^k$,  from~\eqref{CH_VI_tk} we obtain the following
\begin{equation*}
\inner{\pt(\t^k)-\pt(\t^{k-1}),\phi}+D\tt\inner{\nabla\t^k,\nabla\phi} -{L}\inner{\pt(\p^{k}-\p^{k-1}),\phi}=0,\quad\forall\phi\in\Vt. 
\end{equation*}
Taking $\phi=2\tt\pt(\t^k)$ and using the relation $2a(a-b)=a^2+(a-b)^2-b^2$ leads to
\begin{multline*}
\norml{\pt(\t^k)}^2+2\tt D\norml{\nabla\t^k}^2
=\norml{\pt(\t^{k-1})}^2-\norml{\pt(\t^k)-\pt(\t^{k-1})}^2\\+2L\inner{\pt(\p^k-\p^{k-1}),\pt(\t^k)}
\leq \norml{\pt(\t^{k-1})}^2+2L\norm{\pt(\p^k-\p^{k-1})}_{\Va^\prime}\norm{\pt(\t^k)}_{\Va}\\
\leq \norml{\pt(\t^{k-1})}^2+{\frac{L}{q}}\norm{\pt(\p^k-\p^{k-1})}_{\Va^\prime}^2+{Lq}\left(\beta\norml{\nabla P(\t^{k})}^2+\norml{P(\t^k)}^2\right)\\
\leq
\norml{\pt(\t^{k-1})}^2+{\frac{L}{q}}\norm{\p^k-\p^{k-1}}_{\Va^\prime}^2+{Lq}(\beta+C_P^2)\norml{\nabla\t^{k}}^2,
\end{multline*}
where in the last two estimates we have used Young's inequality with the constant $q>0$,~\eqref{eq:proj_stability}, and~\eqref{eq:poincare_neumann}. Now, taking a summation from $k=1\dots K$ we obtain
\begin{equation*}
(2\tt D-Lq(\beta+C_P^2))\sum_{k=1}^K\norml{\nabla\t^k}^2
\leq
\norml{\t^{0}}^2+{\frac{L}{q}}\sum_{k=1}^K\norm{\p^k-\p^{k-1}}_{\Va^\prime}^2,
\end{equation*}
and now choosing $q=\tt D/(L(\beta+C_P^2))$ leads to
\begin{equation}
\norm{\nabla\inp{\t}}^2_{L^2(\Q)}
\leq D^{-1}\norml{\t^{0}}^2+{D^{-2}L^2(\beta+C_P^2)}\norm{\partial_t\inpt{\p}}_{L^2(0,\tend;\Va^\prime)}^2.\label{eq:proof1_t}
\end{equation}
Using the above estimate and~\eqref{eq:proof_est1} in~\eqref{eq:proof_dt_estimate} we obtain
\begin{multline}
\frac{\mu}{2}\norm{\partial_t\inpt{\p}}^2_{L^2(0,T;\Va^\prime)}
\leq 
C\left(1+ \norm{\p^0}^2+\norml{\p^K}^2
+\norml{\t^0}^2+\tt\norml{\nabla{\t}^0}^2\right)\\
+\frac{2\beta C_m^2\cpot^2L^2(\beta+C_P^2)}{\mu D^2}\norm{\partial_t\inpt{\p}}^2_{L^2(0,\tend;\Va^\prime)}.
 \label{eq:proof_der_u}
\end{multline}
For 
$
\beta(\beta+C_P^2)<{\mu^2D^2}/({2\cpot^2C_m^2 L^2}),
$
or alternatively for $\beta\leq\min\left\{\frac{\mu D}{2\cpot C_m L},\frac{\mu^2 D^2}{4\cpot^2C_P^2C_m^2L^2}\right\}$ 
\begin{equation*}
\left(\frac{\mu}{2}-\frac{2\beta C_m^2\cpot^2L^2(\beta+C_P^2)}{\mu D^2}\right)\norm{\partial_t\inpt{\p}}^2_{L^2(0,T;\Va^\prime)}\leq  C,
\end{equation*}
where $C$ depends on $\p^0$, $\p^K$, $\t^0$ and, hence, we have $\partial_t\inpt{\p}\in L^2(0,T;\Va^\prime)$.

\begin{itemize}
\item \emph{Estimates for the temperature $\inp{\t}$ and $\partial_t\inp{\t}$.}
\end{itemize}
Using the regularity $\partial_t\inpt{\p}\in L^2(0,T;\Va^\prime)$, from~\eqref{eq:proof1_t} it immediately follows that 
\begin{equation}
\norm{\nabla\inp{\t}}^2_{L^2(\Q)}
\leq C\left(\norml{\t^{0}}^2+\norm{\partial_t\inpt{\p}}_{L^2(0,\tend;\Va^\prime)}^2\right)\leq C,\label{eq:proof_t_bound}
\end{equation}
and, hence, $\inp{\t}\in L^2(0,\tend;\Vt)$,  and from~\eqref{eq:t_interp_rel} it is also $\inpl{\t}\in L^2(0,\tend;\Vt)$. {Then from~\eqref{mt_condition} we have $m(\inp{\t}),m(\inpl{\t})\in L^2(0,\tend;\Vt)\cap L^\infty(\QC)$.} 
Next, from~\eqref{CH_VI_tk_int} we obtain
\begin{multline*}
\norms{\inner{\partial_t\inpt{\t}(t),\phi}}\leq D\norms{\inner{\nabla\inp{\t}(t),\nabla\phi}}+L\norms{\duala{\partial_t\inpt{\p}(t),\phi}}\\
\leq \left(D\norm{\nabla\inp{\t}(t)}+L\max\{1,\beta\}\norm{\partial_t\inpt{\p}(t)}_{\Va^\prime}\right)\norm{\phi}_{\Vt},
\end{multline*}
where we have used repeatedly Cauchy inequality and $\norm{\phi}_{\Va}\leq\max\{1,\beta\}\norm{\phi}_{\Vt}$. 
Now, dividing by $\norm{\phi}_{\Vt}\neq 0$, taking a supremum over $\norm{\phi}_{\Vt}\neq 0$, and then taking the square of the above estimate and integrating over $(0,T)$ leads to
\begin{equation}
\|\partial_t\inpt{\t}\|^2_{L^2(0,\tend;\Vt^\prime)}\leq C\left(\norm{\inp{\t}}_{L^2(0,\tend;\Vt)}^2
+\norm{\partial_t\inpt{\p}}_{L^2(0,\tend;\Va^\prime)}^2\right)\leq C\label{eq:proof_dtt_bound}
\end{equation}
and, hence, $\partial_t\inpt{\t}\in L^2(0,\tend;\Vt^\prime)$.
\begin{itemize}
\item \emph{Estimates for the chemical potential $\inp{w}$.}
\end{itemize}
Using the fact that $\inp{w}(t)=-{\mu}\G (\partial_t\inpt{\p}(t))$, where $\G:\Va^\prime\to\Va$ is defined as in~\eqref{greensVa_eq}, together with~\eqref{greens_Va} we obtain
\begin{equation*}
\norm{\inp{w}}^2_{L^2(0,\tend;\Va)}=\mu^2\norm{\G(\partial_t\inpt{\p})}^2_{L^2(0,\tend;\Va)}=\mu^2\norm{\partial_t\inpt{\p}}^2_{L^2(0,\tend;\Va^\prime)}\leq C,
\end{equation*}
and hence $\inp{w}\in L^2(0,\tend; \Va)\cong L^2(0,\tend;H^1(\D))$ (for $\beta>0$).

\begin{itemize}
\item \emph{Estimates for the Lagrange multiplier $\inp{\lambda}$.}
\end{itemize}

Consider~\eqref{CH_VI_tk_int_c} and recall that $B\inp{u}-\cpot\inp{u}=\xi\inp{u}-\kernel*\inp{u}$,  then 
\begin{align}
\inp{\lambda}(t) 
=-\xi\inp{\p}(t)+\kernel*\inp{\p}(t)-{\cpot}/{2}+{\cm m(\inpl{\t}(t))} + \inp{w}(t).\label{eq:proof1_lagrange_mult}
\end{align}
Using Youngs inequality,~\eqref{mt_condition} and the fact that $\p\in L^\infty(\QC)$ we can estimate
\begin{multline*}
\norml{\inp{\lambda}(t)}
\leq \norm{\kernel*\inp{\p}(t)}+\norm{\xi\inp{\p}(t)} + \norm{{\cm m(\inpl{\t}(t))}-{\cpot}/{2}}+\norm{\inp{w}(t)}
\leq C+\norm{\inp{w}(t)}_{\Va}.
\end{multline*}
Taking a square and integrating over $(0,\tend)$ leads to $\|\inp{\lambda}\|^2_{L^2(\Q)}\leq C$,
hence $\lambda\in L^2(Q)$.

\begin{itemize}
\item \emph{Higher regularity estimates for the phase-field variable $\inp{\p}$.}
\end{itemize}
We show that for $\beta>0$,  $\xi>0$ or $\beta=0$, $\xi\geq 0$, $\inp{\p}\in L^2(0,\tend;H^1(\D))$. For $\beta>0$,  $\xi>0$ the proof is similar as in~\cite{BG2021CH} and we consider the case $\beta=0$,  $\xi\geq 0$. Then, from~\eqref{proj_tk_AC} and~\eqref{reg_u_tk} it follows that for a.e. $t\in (0,1)$ the following holds
\begin{equation*}
\inp{\p}(t)=P_{[0,1]}\left(\frac{1}{\mu/\tt+\xi}\inp{g}(t)\right)\quad\text{and}\quad \norml{\nabla\inp{\p}(t)}\leq \frac{1}{\mu/\tt+\xi}\norml{\nabla\inp{g}(t)},
\end{equation*}
where $\inp{g}:=({\mu}/{\tt})\inpl{\p}+\kernel*\inp{\p}+{\cm m(\inpl{\t})}-\cpot/2$.
Then, using Young's inequality,~\eqref{mt_condition} and $\inpl{u}(t)\in L^{\infty}(\QC)$ we estimate
\begin{multline*}
\norml{\nabla\inp{\p}(t)}
\leq \frac{1}{\mu/\tt+\xi}\left(
\frac{\mu}{\tt}\norml{\nabla\inpl{\p}(t)}+\norml{\nabla(\kernel*\inp{\p}(t))}
+\norml{\nabla\left(\cm m(\inpl{\t}(t))-\cpot/2\right)}\right)\\
\leq \frac{1}{\mu/\tt+\xi}\left(
\frac{\mu}{\tt}\norml{\nabla\inpl{\p}(t)}+\norml{\nabla\kernel}_{L^1(\R^n)}\norml{\inp{\p}(t))}_{L^{2}(\DD)}
+\norm{\cm m^\prime(\inpl{\t}(t))}_{L^\infty(\D)}{\norml{\nabla\inpl{\t}(t)}}\right)\\
\leq \frac{1}{1+\xi\tt/\mu}\norml{\nabla\inpl{\p}(t)}+\frac{C}{\mu/\tt+\xi}\left(1+\norml{\nabla\inpl{\t}(t)}\right)
\leq \norml{\nabla\inpl{\p}(t)}+C\tt\left(1+\norml{\nabla\inpl{\t}(t)}\right),
\end{multline*}
where in the last estimate we have used that $1/(1+\xi\tt/\mu)\leq1$ and $ C/(\mu/\tt+\xi)\leq C\tt$, and $C>0$ is independent of $\tt$. From the above estimate for $k=1$ we deduce that
\begin{align*}
\norml{\nabla{\p}^1}\leq \norml{\nabla\p^0}+C\tt(1+\norml{\nabla\t^0})\leq C+C\tt(1+\norml{\nabla\t^0}),
\end{align*}
and by an induction argument we obtain that for $k=1,\dots,K$ it holds
\begin{align*}
\norml{\nabla{\p}^k}\leq \norml{\nabla\p^{k-1}}+C\tt\left(1+\norm{\nabla\t^{k-1}}\right)\leq C+C\tt\sum_{i=1}^k(1+\norml{\nabla\t^{i}}).
\end{align*}
Taking the supremum over $k$ in the above estimate leads to
\begin{multline*}
\esssup_{t\in(0,T)} \norm{\nabla\inp{\p}(t)}=\sup_{k}\norm{\nabla\p^k}\leq C+C\sup_k\tt\sum_{i=1}^k\left(1+\norm{\nabla\t^i}\right)\\
= C+C\tt K+\tt\sum_{i=1}^K\norm{\nabla\t^i}\leq C\left(1+\norm{\inp{\t}}_{L^1(0,T;H^1(\D))}\right),
\end{multline*}
which implies that $\inp{\p}\in L^{\infty}(0,\tend;H^1(\D))$ for $\beta\geq 0$ and $\xi\geq 0$.

\begin{itemize}
\item \emph{Higher regularity estimates for the Lagrange multiplier $\inp{\lambda}$.}
\end{itemize}
For $\beta>0$ and $\xi=0$, in general,  we can not expect higher regularity of the phase-field variable $\inp{u}$.  However,  we can demonstrate that the corresponding Lagrange multiplier $\inp{\lambda}(t)\in\Va$ for $\xi\geq 0$ and $\beta>0$.  
Indeed,  let $\xi\geq 0$ then from~\eqref{CH_VI_tk_int_c},~\eqref{eq:proof1_lagrange_mult}, using the regularity of $\inp{w}\in L^2(0,\tend;\Va)\cong L^2(0,\tend;H^1(\D))$, $\inp{\p}\in L^2(0,\tend; H^1(\D))$ (for $\xi>0$),  $m(\inp{\t})\in L^2(0,\tend;H^1(\D))$ and the fact that 
\[
\norm{\kernel*u}^2_{L^2(0,\tend;\Va)}\leq\left(\beta\norm{\nabla\kernel}^2_{L^1(\R^n)}+\norm{\kernel}^2_{L^1(\R^n)}\right)\norm{\inp{\p}}^2_{L^\infty(\QC)}\leq C,
\]
it follows that $\inp{\lambda}\in L^2(0,\tend;H^1(\D))$.
For $\beta=0$ we have $\inp{\lambda}\in L^2(\Q)$ and,  in general,  no higher regularity can be expected,  since in~\eqref{eq:proof1_lagrange_mult} $\inp{w}=-\partial_t\inpt{\p}\in L^2(\Q)$.

Summarizing the above we derive the following energy estimate for $\xi\geq 0$, $\beta\geq 0$:
\begin{multline}
\norm{\partial_t\inpt{\p}}^2_{L^2(0,\tend;\Va^\prime)}+\|\partial_t\inpt{\t}\|_{L^2(0,\tend; \Vt^\prime)}^2+\norm{\inp{\p}}^2_{L^\infty(\QC)}+\norm{\inp{\t}}^2_{L^2(0,\tend; \Vt)}\\
+ \norm{\inp{w}}^2_{L^2(0,\tend;\Va)}+\norm{\inp{\lambda}}^2_{L^2(\Q)}\leq C,\label{eq:aux_energy_est}
\end{multline}
where the underlying constant depends on the initial data $\p^0$ and $\t^0$.

\begin{itemize}
\item \emph{Passing to the limit $\tt\to 0$.}
\end{itemize}

From the energy estimate~\eqref{eq:aux_energy_est} using Banach-Alaoglu theorem we can extract weakly convergent subsequences.  That is,  there exist functions $\p$,  $\t$,  $\lambda$ and $w$ ($\beta>0$) such that as $\tt\to 0$ (equivalently $K\to\infty$) the following holds
\begin{align*}
\partial_t\inpt{\p}\rightharpoonup\partial_t\p\quad &\text{weakly in}\; L^2(0,\tend;\Va^\prime),\\
\partial_t\inpt{\t}\rightharpoonup\partial_t\t\quad &\text{weakly in}\; L^2(0,\tend;\Vt^\prime),\\
\inp{\t}\rightharpoonup \t\quad &\text{weakly in}\; L^2(0,\tend; \Vt),\\
\inp{\p}\rightharpoonup\p\quad &\text{weakly-* in}\; L^{\infty}(0,\tend;H^1(\D)),&&\text{for}\; \xi>0,\\
\inp{\p}\rightharpoonup\p\quad &\text{weakly-* in}\; {L^\infty(Q),}&&\text{for}\; \xi\geq 0,\\
\inp{\lambda}\rightharpoonup\lambda\quad &\text{weakly in}\; {L^2(0,\tend,H^1(\D)),} &&\text{for}\; \beta>0,\\
\inp{\lambda}\rightharpoonup\lambda\quad &\text{weakly in}\; {L^2(Q),}&&\text{for}\;\beta = 0,\\
\inp{w}\rightharpoonup w\quad &\text{weakly in}\; L^2(0,\tend;\Va),
\end{align*}
where by a slight abuse of notation we drop the subsequent index.   
To show that the limiting functions fulfill the variational formulation~\eqref{CH_VI},   we pass to the limit when $\tt\to 0$ (or $K\to\infty$) in the time-integrated variant of the variational system~\eqref{CH_VI_tk_int}:
\begin{multline}
\int_0^\tend\inner{\partial_t\left(\inpt{\t}-L\inpt{\p}\right),\phi}\d t+\int_{0}^{\tend}D\inner{\nabla\inp{\t},\nabla\phi}\d t =0,\quad\forall\phi\in L^2(0,\tend;\Vt)\\
\int_0^{\tend}\inner{{\mu}\partial_t \inpt{\p}+\inp{w},\psi}\d t+\int_0^{\tend}\beta\inner{\nabla\inp{w},\nabla\psi}\d t=0,\quad\forall\psi\in L^2(0,\tend;\Va),\\
\inner{\xi\inp{\p}-\kernel*\inp{\p}+{\cpot}/{2}-{\cm m(\inpl{\t})} - \inp{w}+\inp{\lambda},\zeta}_{L^2(\Q)}=0,\quad\forall\zeta\in L^2(0,\tend;\Vb),\\
\inner{\eta-\inpp{\lambda},1-\inp{\p}}_{L^2(\Q)}\geq 0,\quad \inner{\eta-\inpm{\lambda},\inp{\p}}_{L^2(\Q)}\geq 0,\quad\forall\eta\in L^2(0,\tend;\M).
\label{CH_VI_tk_integrated_time}
\end{multline}
Passing to the limit in the first two equations,  due to linearity,  leads us to the fact that the limiting functions $\t$ and $\p$ satisfy the first two equations in~\eqref{CH_VI_time}.
Due to nonlinearity induced by $m(\t)$ and inequality constraints,  respectively,  we need to achieve strong convergences in the corresponding sequences before passing to the limit in the remaining last two equations in~\eqref{CH_VI_tk_integrated_time}.

Since $\inpt{\t}$ is uniformly bounded in $L^2(0,T;H^1(\D))\cap W^{1,2}(0,T;(H^1(\D))^\prime)$, where 
for $p\in[1,\infty)$, $W^{1,p}(0,\tend;X)=\{u\in L^p(0,\tend;X)\colon \partial_t u\in L^p(0,\tend;X)\}$, which follows from~\eqref{eq:proof_t_bound},~\eqref{prop_interp1}, and~\eqref{eq:proof_dtt_bound},
by the Aubin-Lion's lemma we obtain that $\inpt{\t}\to\t$ strongly in $L^2(\Q)$.
Then,  using~\eqref{prop_interp3} it follows
\begin{align*}
\norm{\inpl{\t}-\t}^2_{L^2(\Q)}\leq\norm{\inpl{\t}-\inpt{\t}}^2_{L^2(\Q)}
+\norm{\inpt{\t}-\t}^2_{L^2(\Q)}\\
\leq \tt\norm{\partial_t\inpt{\t}}^2_{L^2(0,\tend;\Vt^\prime)}+\tt\norm{\inp{\t}}^2_{L^2(0,\tend;\Vt)}+\tt^2\norm{\t^0}^2_{\Vt}+\norm{\inpt{\t}-\t}^2_{L^2(\Q)},
\end{align*}
and passing to the limit for $\tt\to 0$ it follows that $\inpl{\t}\to\t$ strongly in $L^2(\Q)$.  Now,  using a uniform Lipschitz continuity of $m(\inpl{\t})$~\eqref{mt_condition} we deduce that $m(\inpl{\t})\to m(\t)$.  Then, passing to the limit $\tt\to 0$ in the third equation in~\eqref{CH_VI_tk_integrated_time} we obtain that the limiting functions $\p$, $\t$, $w$, and $\lambda$ satisfy the corresponding equation~in~\eqref{CH_VI_time}.

Analogously,  to pass to the limit in the inequality constraints we upgrade a weak convergence to strong in $\inp{\p}$.  We consider the case $\beta=0$ and $\xi\geq 0$,  since the remaining case $\beta>0$,  follows similar steps as in~\cite{BG2021CH}.  
First,  we note that $\inpt{\p}$ is uniformly bounded in $L^2(0,\tend;H^1(\D))$,  which directly follows the bound on $\inp{\p}$ in $L^2(0,\tend;H^1(\D))$ and~\eqref{prop_interp1}.   
Then, invoking the Aubin-Lion's lemma we obtain that  $\inpt{\p}\to \p$ strongly in $L^2(\Q)$, and using~\eqref{prop_interp2} we have that
\begin{multline*}
\norm{\inp{\p}-\p}_{L^2(\Q)}\leq \norm{\inp{\p}-\inpt{\p}}_{L^2(\Q)}+\norm{\inpt{\p}-\p}_{L^2(\Q)} 
\leq\frac{\tt}{3}\norm{\partial_t\inpt{\p}}_{L^2(\Q)}+\norm{\inpt{\p}-\p}_{L^2(\Q)}.
\end{multline*}
Then, passing to the limit when $\tt\to 0$ we obtain that $\inp{\p}\to\p$ strongly in $L^2(\Q)$.  Invoking the properties~\eqref{CH_VI_compl_split} and the fact that the inner product of a strong and weak convergences converges we pass to the limit in~\eqref{CH_VI_tk_integrated_time}
\begin{align*}
0\leq (\eta,1-\inp{\p})_{L^2(\Q)}\to(\eta,1-\p)_{L^2(\Q)}\;\;\text{and}\;\;
\inner{\inpp{\lambda},1-\inp{\p}}_{L^2(\Q)}\to \inner{\lambda_+,1-\p}_{L^2(\Q)}.
\end{align*}
Proceeding similarly for $\inpm{\lambda}$ we obtain that the limiting pair $(\p,\lambda_{\pm})$ satisfy the complementarity conditions  in~\eqref{CH_VI_time} and this concludes the proof. 
\end{proof}

\subsection{Sharp interfaces}
In the next theorem we extend the projection formula~\eqref{proj_tk_CH} to the time continuous case and provide conditions when sharp interfaces occur.
\begin{theorem}[Sharp interfaces]
Let $(\t,\p,w,\lambda)$ be a solution of~\eqref{CH_VI_time} and $\kernel\in W^{1,1}(\R^n)$ satisfies~\eqref{kernel_cond1}, then the following holds a.e.  in $\D\times (0,T)$:
\[
\p(t)=P_{[0,1]}\left(\xi^{-1}{g(t)}\right),\quad\text{where}\; g:=w+\kernel*\p+\cm m(\t)-{\cpot}/{2}.
\]
Furthermore,  if $\xi>0$,  $\beta> 0$ and  $|\{g(t)\}|=0$ a.e.  $t\in(0,T)$,  then $\p(t)$ is discontinuous and attains only pure phases $\p(t)\in\{0,1\}$ a.e.  in $\D\times(0,T)$.
\end{theorem}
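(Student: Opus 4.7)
My plan is to mirror the proof of the time-discrete sharp interface theorem (for $\beta>0$), but now working directly with the pointwise a.e.\ interpretation of the $L^2(\Q)$-integrated variational system \eqref{CH_VI_time}. In particular, I would avoid passing to the limit $\tt \to 0$ inside the nonlinear projection operator $P_{[0,1]}$ (which is delicate because of the nonlinearity of $P_{[0,1]}$); instead, I re-derive the projection formula at the continuous level from the KKT-type conditions embedded in \eqref{CH_VI_time}.

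First, since the third equation in \eqref{CH_VI_time} holds for all $\zeta \in L^2(0,\tend;\Vb) \cong L^2(\Q)$, the fundamental lemma of the calculus of variations gives the pointwise identity $\param\p - \kernel * \p + \cpot/2 - \cm m(\t) - w + \lambda = 0$ a.e.\ in $\Q$, where $\param = \cker - \cpot$ and $\lambda = \lambda_+ - \lambda_-$. Rearranging in the notation of the statement, this becomes $\param \p = g - \lambda$ pointwise a.e. Second, by choosing $\eta = 0$ and $\eta = 2\lambda_\pm \in L^2_+(\Q)$ in the two inequalities of \eqref{CH_VI_time}, I obtain (just as in the argument leading to \eqref{CH_VI_compl_split}) the pointwise complementarity relations $\lambda_\pm \geq 0$, $\lambda_+(1-\p) = 0$, and $\lambda_- \p = 0$ together with $0 \leq \p \leq 1$ a.e. These state precisely that $\lambda(x,t) \in \partial\mathbb{I}_{[0,1]}(\p(x,t))$ for a.e.\ $(x,t) \in \Q$.

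Combining these two facts yields the pointwise inclusion $\param \p + \partial\mathbb{I}_{[0,1]}(\p) \ni g$, which by the standard resolvent characterization of projection onto a closed convex set is equivalent to $\p = P_{[0,1]}(g/\param)$ for $\param > 0$. This establishes the projection formula. For the sharp interface statement, the formula gives $\p(x,t) \in (0,1)$ if and only if $g(x,t)/\param \in (0,1)$, so the stated measure-zero hypothesis on the relevant level set of $g$ immediately forces $\p(x,t) \in \{0,1\}$ a.e.\ in $\Q$, hence a $\{0,1\}$-valued profile and a discontinuous interface. The main obstacle is Step~2: rigorously passing from the $L^2(\Q)$-integrated complementarity inequalities to pointwise a.e.\ relations, by exploiting the arbitrariness of the nonnegative test function $\eta \in L^2_+(\Q)$ (localizing with indicator functions of arbitrary measurable subsets of $\Q$); once this pointwise identification of $\lambda$ as a selection of $\partial\mathbb{I}_{[0,1]}(\p)$ is in hand, Steps 3 and 4 are a standard convex-analytic fact and a direct consequence of the projection formula, respectively, combined with the regularity $w,\kernel * \p, m(\t) \in L^2(0,\tend;H^1(\D))$ which for $\beta>0$ follows from the bounds established in the existence theorem.
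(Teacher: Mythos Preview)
Your approach is correct and is essentially the natural one: the paper omits the proof entirely (and for the time-discrete analogue merely refers to \cite{BG2021CH}), so there is nothing to compare against beyond noting that your argument---reading off the pointwise identity $\param\p = g - \lambda$ from the third line of \eqref{CH_VI_time}, localizing the complementarity inequalities to obtain $\lambda\in\partial\mathbb{I}_{[0,1]}(\p)$ a.e., and invoking the resolvent characterization of $P_{[0,1]}$---is precisely the expected route and mirrors the discrete case.

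Two small remarks. First, your decision to work directly at the continuous level rather than pass to the limit in the discrete projection formula is sensible, though note that $P_{[0,1]}$ is $1$-Lipschitz, so the limit passage would in fact be harmless once you have strong $L^2(\Q)$ convergence of $\inp{g}$ (which the existence proof already supplies for $\beta>0$ via strong convergence of $\inp{w}$, $\kernel*\inp{\p}$, and $m(\inpl{\t})$); your direct argument is simply cleaner. Second, be aware that the ``Furthermore'' clause of the theorem as stated appears to carry a typo: for $\param>0$ the projection formula shows $\p\in(0,1)$ exactly on $\{0<g<\param\}$, so the hypothesis needed to force pure phases is that \emph{this} set (not merely $\{g=0\}$) has measure zero---or, matching the discrete theorem, the clause should read $\param=0$. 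You have tacitly handled this by writing ``the relevant level set of $g$'', which is the right instinct; just make the discrepancy explicit if you write this up.
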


\section{Discretization and numerical examples}\label{sec:discretization}
We discuss discretization of~\eqref{CH_VI_tk_SP} using piece-wise linear finite elements and present several numerical examples. 

\subsection{Discretization}
Let $\{\mathcal{T}_h\}_h$ be a shape-regular triangulation of $\DD$ with $h$ denoting a maximum diameter of the elements $\mathcal{T}\in\{\mathcal{T}_h\}_h$,  We denote $\mathcal{J}_h^k$, $k\in\{i,c,ic\}$ a set of nodes corresponding to the triangulation of $\overline{\D}$ ($k=i$), $\DD\setminus\overline{\D}$ ($k=c$) and $\overline{\DD}$ ($k=ic$).
Associated with $\{\mathcal{T}_h\}_h$ we define piece-wise linear finite element spaces: ${S}^h_\omega=\{v_h\in C^{0}(\overline{\omega})\colon v_h|_{\mathcal{T}}\in P_1(\mathcal{T}),\;\forall \mathcal{T}\in\mathcal{T}_h\}$,  with $\omega\in\{\D,\DD\}$, see~\cite{BG2021CH} for more details.
As detailed there we employ a trapezoidal quadrature rule that provides mass-lumping property for the local and nonlocal terms:
\begin{align*}
\inner{B\phi,\psi_j}=b_h(\phi,\psi_j):=(\cker^h\phi,\psi_j)_h-(\kernel\circledast\phi,\psi_j)_h+(\mathcal{N}_h\phi,\psi_j)_{\bar{h}},\quad \forall\phi,\psi_j\in\Sh,
\end{align*}
where the last term vanishes for $j\in\mathcal{J}_h^{i}$ and $b_h(\phi,\psi_j)=(\mathcal{N}_h\phi,\psi_j)_{\bar{h}}$ for $j\in\mathcal{J}_h^{c}$,  where $\mathcal{N}_h\phi=\cker^h\phi-\kernel\circledast\phi$.
Here,  $(\cdot,\cdot)_{h,\bar{h}}$ denotes a mass-lumped $L^2(\D)$ and $L^2(\DI)$ inner product, respectively,  and 
\[
(\kernel\circledast\phi)(\x)=\int_{\DD}I_h^{\xx}\left[\kernel(\x,\xx)\phi(\xx))\right]\d\xx,\quad \cker^{h}(\x)=\int_{\DD}I_h^{\xx}\left[\kernel(\x,\xx)\right]\d\xx,
\]
where $I_h^\xx\left[\cdot\right]$ denotes a nodal interpolant with respect to $\xx$.

Then,  given $\t^0_h\in\Sho$,  $\p^0_h\in\Sh$ we seek $\t_h^k,w^k,\lambda^k\in\Sho$ and $\p_h^k\in\Sh$, such that for $k=1,\dots,K$ it holds
\begin{align*}
&(\th^k,\phi)_{h}+\tt D(\nabla \th^k,\nabla\phi)-L(\ph^k,\phi)_h=(\th^{k-1}+L\ph^{k-1},\phi)_{h},&\forall\phi\in\Sho,\\
&\mu(\ph^k,\psi)_{h}+\tt (\wh^k,\psi)_h+\beta\tt(\nabla \wh^k,\nabla\psi)=\mu(\ph^{k-1},\psi)_{h},&\forall\psi\in\Sho,\\
&b_h(\p_h^k,\zeta)-(\cpot\ph^k+\wh^k+{\cm m(\th^{k-1})}-\lh^k-\cpot/2,\zeta)_{h}=0,  &\forall\zeta\in\Sh,\\
&\lhp^k(\x_j)(\ph^k(\x_j) - 1)=0,\quad \lhm^k(\x_j)\ph^k(\x_j)=0, &\forall \x_j\in\mathcal{J}_h^{i},\\
&\lh^k=\lhp^k-\lhm^k,\quad \lhpm^k\geq 0,\quad 0\leq \ph^k\leq 1.
\end{align*}
For the solution algorithm we adapt a primal-dual-active set strategy~\cite{kunisch2002} that has been already successfully applied in local and nonlocal settings~\cite{blank2011,BurkovskaGunzburger2019,BG2021CH}. 
Additionally,  employing an explicit discretization of the convolution term, 
$b_h(\ph^k,\zeta)\approx(\cker^h\ph^k,\zeta)_h-(\kernel\circledast\ph^{k-1},\zeta)_h$ can further speed-up computations,  and in the Allen-Cahn case ($\beta=0$)
can completely avoid a solution of a nonlinear phase-field system,  since $\p_h^k$ can be directly evaluated from a discrete version of the projection formula~\eqref{proj_tk_AC}.  That is,   for a given $\th^{k-1}$ and $\ph^{k-1}$, $K=1,\dots,K$,  $x_j\in \mathcal{J}_h^{i}$ it holds
 \begin{equation*}
\ph^k(x_j):=P_{[0,1]}\left(\frac{g_h^k(x_j)}{\mu/\tt+\cker^h(x_j)-\cpot}\right),\quad g_h^k:=\frac{\mu}{\tt}\ph^{k-1}+\kernel\circledast\ph^{k-1}+\cm m(\th^{k-1})-\frac{\cpot}{2}.
\end{equation*}

\subsection{Numerical examples}
We set $\D=(0,1)^n$, $n\in\{1,2\}$, which is discretized with a uniform mesh of a mesh size $h$.  
We consider a polynomial type nonlocal kernel:
\begin{equation*}
\kernel(\x-\xx)=\begin{cases}
\varepsilon^2 C(\delta)\max\left(0,1-\frac{|\x-\xx|^2}{\delta^2}\right),\quad\text{if}\;|\x-\xx|\leq \delta,\quad\delta>0,\\
0,\quad\text{otherwise},
\end{cases}
\end{equation*}
where $C(\delta)$ is chosen such that $\int_{\R^n}|\zeta|^2\kernel(|\zeta|)\d\zeta=2n\varepsilon^2$,  thus,  $C(\delta)=15/(2\delta^3)$ for $n=1$ and $C(\delta)=24/(\pi\delta^4)$ for $n=2$. {The kernel is additionally scaled by $\varepsilon>0$ as in~\eqref{eq:strong_solid_nonlocalCH} to keep resemblance to the corresponding local model for vanishing nonlocal interactions. }
Then,  for all $\x\in\D$ the constant $\cker$ in~\eqref{cker} can be computed exactly using $\cker=\frac{2\pi^{n/2}}{\Gamma(n/2)}\int_{0}^\delta|\xi|^{n-1}\hat{\kernel}(|\xi|)\d\xi$,  which leads to 
$\cker=10\varepsilon^2\delta^{-2}$ ($n=1$) and $\cker=12\varepsilon^2\delta^{-2}$ ($n=2$). 
The coupling term $m(\t)$ is chosen as in~\eqref{mt_kobayashi}
together with $\cpot=1/6$ in order to match the condition $F(0,\t)-F(1,\t)=m(\t)/6$; see~\cite{kobayashi1993}.  
\subsection*{Example~1}
We set the model parameters to 
$\mu=0.0012$, 
$\t_e=1$, 
$\alpha =0.9$, 
$\rho=20$, 
$L=0.5$, 
$D=1$, 
$\e=0.02$.  The final time is set to $\tend=0.05$,  $\tt=0.0003$ and $h=0.0024$.  We chose $\beta=0.02$ and set a nonlocal interaction radius to
$\delta=0.1540$, which corresponds to 
$\xi=\cker-\cpot=0.002$. 
The initial conditions are set to:
\begin{equation*}
\p^0(\x)=\begin{cases}
1,\quad\x\leq 0.2\\
0,\quad\text{otherwise},
\end{cases}\qquad
\t^0(\x)=0.
\end{equation*}
The corresponding snapshots of the solutions at different time instances are presented on Figure~\ref{fig:ex1}. We can observe that the nonlocal model delivers sharp interfaces (up to two grid points per interface) compared to the corresponding local model, where the interface is diffuse.  Furthermore, the sharpness of the interface in the nonlocal solution remains during a whole time evolution. 
We also investigate the effect of the parameter $\beta>0$ on the solution. Using the same settings as above on Figure~\ref{fig:ex2} we plot the snapshots of the nonlocal solutions for different values of $\beta$. As expected, we observe that the thickness of the interface decreases with increasing~$\beta$.  
\begin{remark}\label{rem:1}
We notice that the speed of the interface differs with respect to $\beta$. 
This suggest that in order to correctly match the speed of the interface with, e.g., nonlocal or local Allen-Cahn type models the remaining model parameters must be scaled appropriately with respect to $\beta$. The question of the interplay of the model parameters and nonlocal parameters (such as those $\delta$, $\beta$ or $\cker$) to match certain physical properties of the solution, such as e.g., a speed of the interface or mass, is more complex and will be detailed in our forthcoming work~\cite{BDGR23}. 
\end{remark}
\begin{figure}[ht!]
  \includegraphics[width=0.3\textwidth]{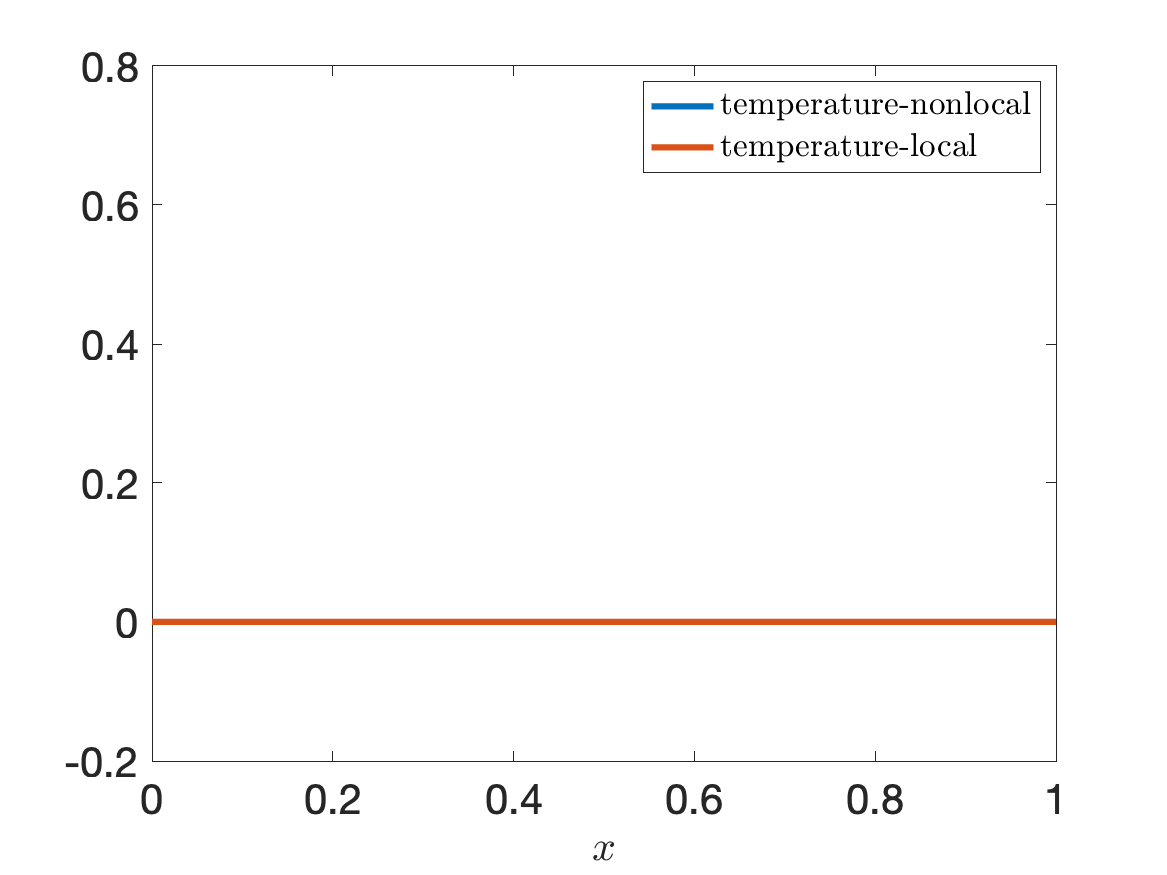}
   \includegraphics[width=0.3\textwidth]{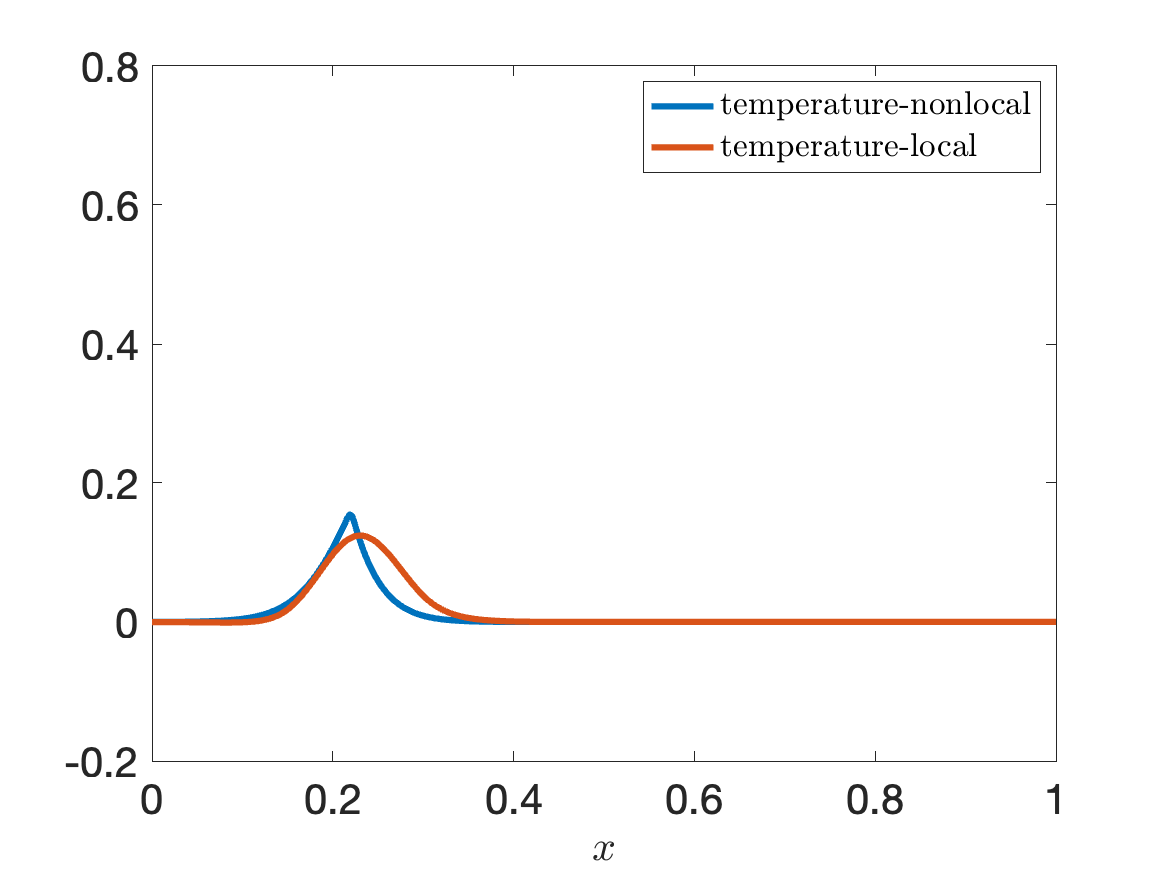}
    \includegraphics[width=0.3\textwidth]{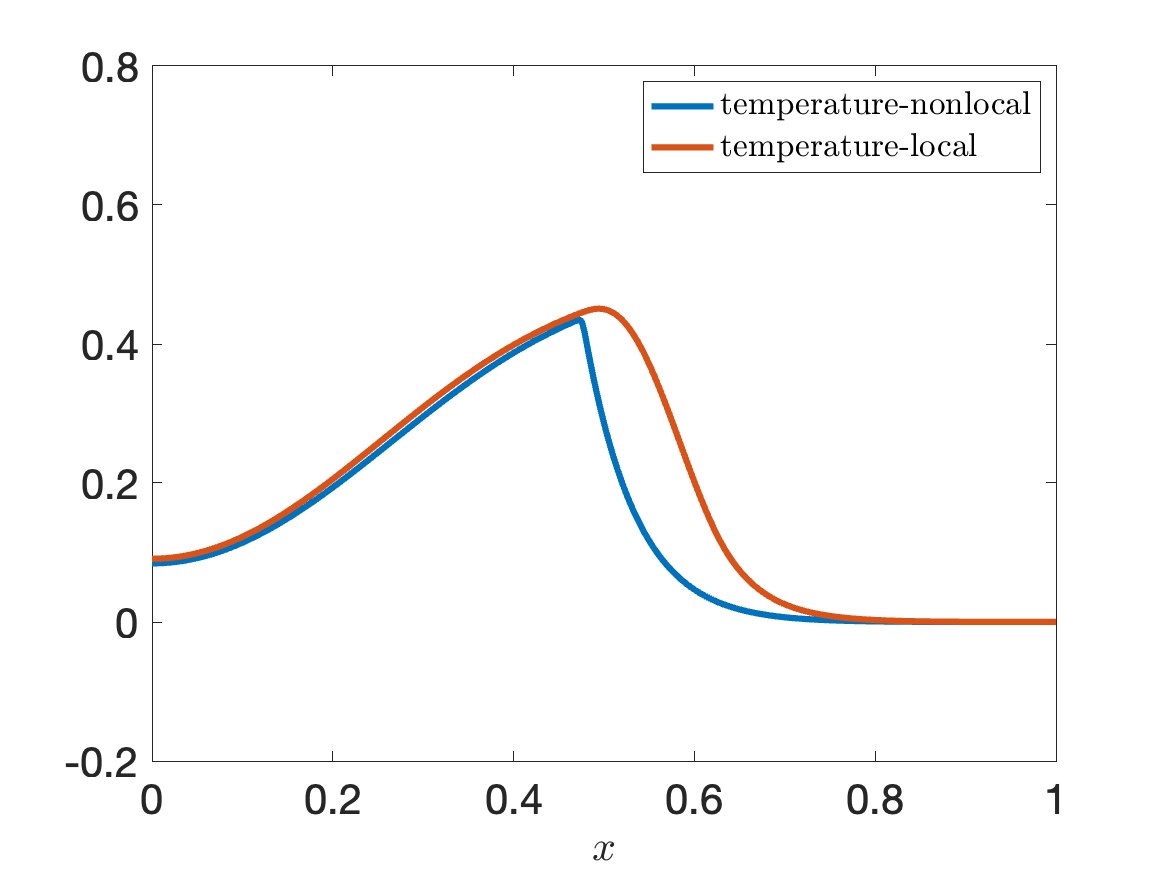}\\
        \includegraphics[width=0.3\textwidth]{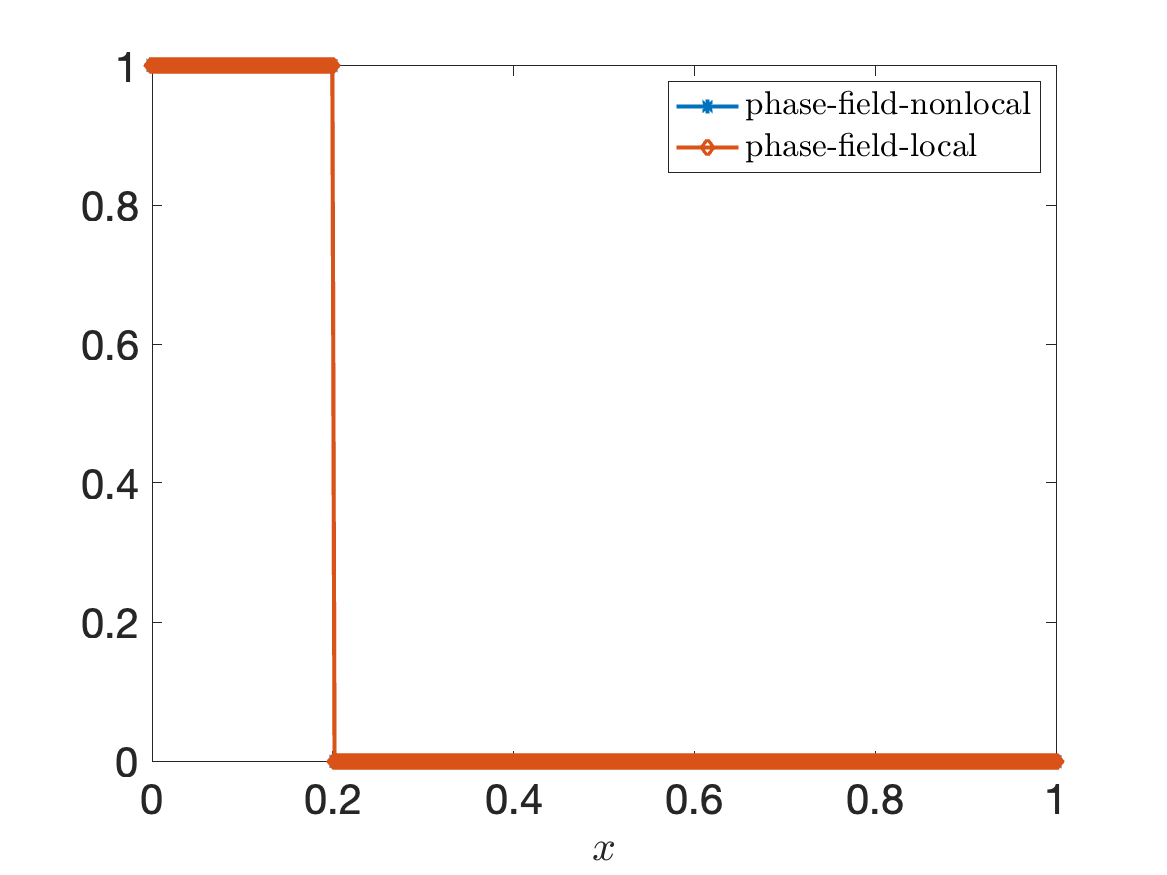}
   \includegraphics[width=0.3\textwidth]{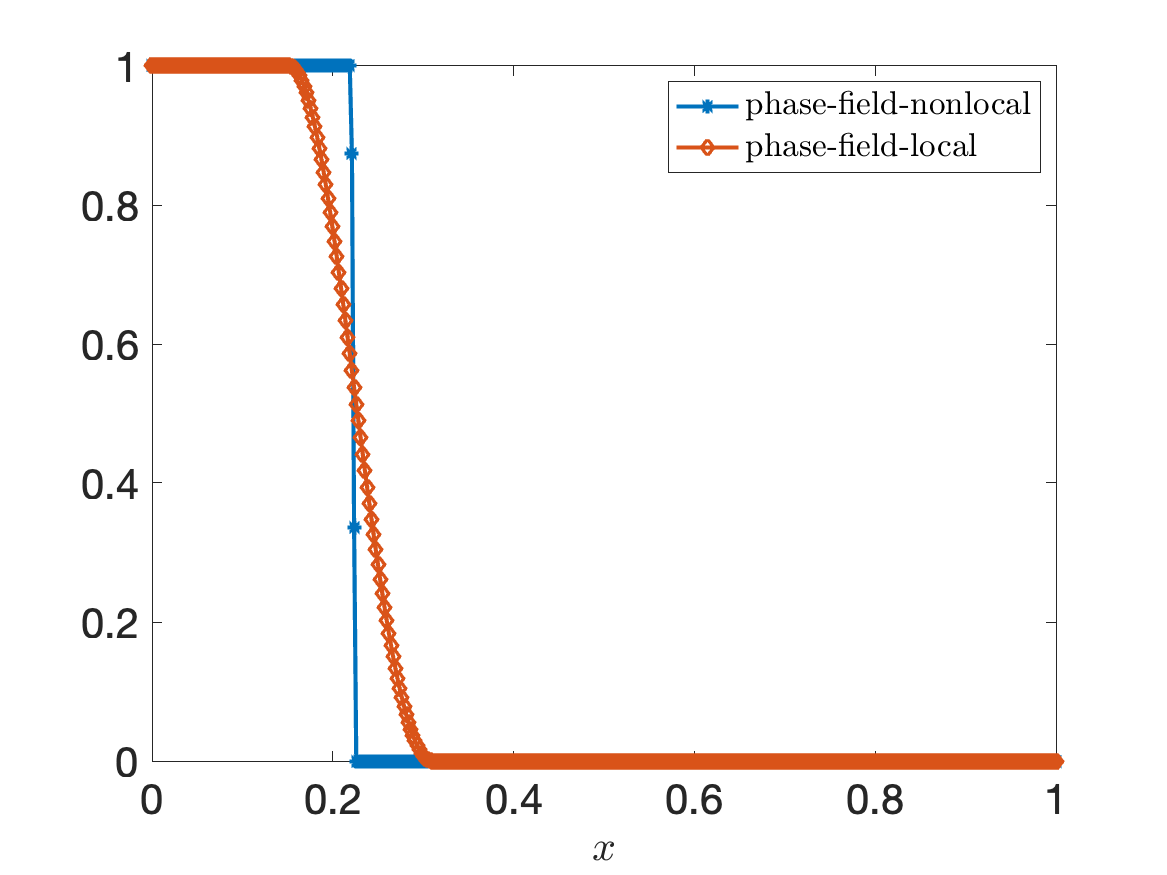}
    \includegraphics[width=0.3\textwidth]{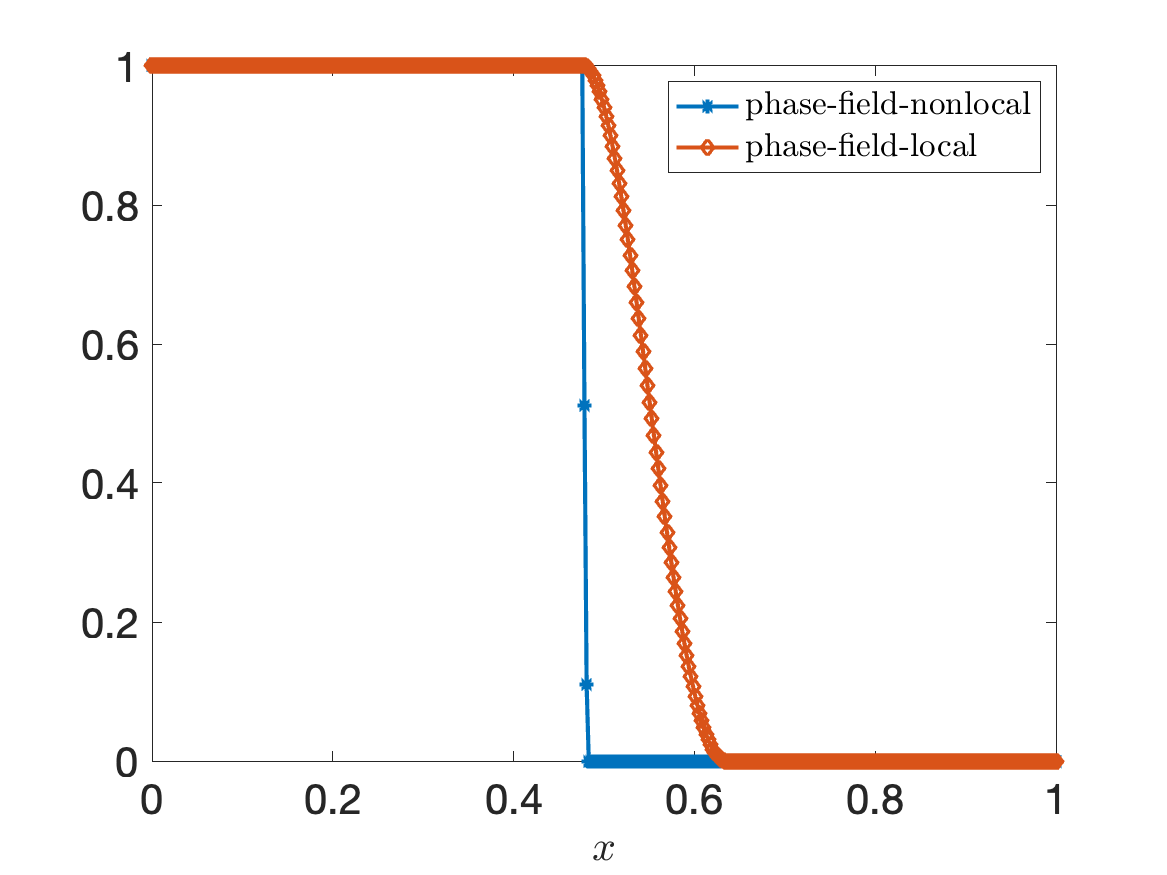}
 \caption{Snapshots of the local (red) and nonlocal (blue) solutions of the temperature (top) and phase-field variable (bottom) at $t_k=[0,0.0013,0.0163]$ (from left to right).}\label{fig:ex1}
 \end{figure} 
 \subsection*{Example~2}
We investigate numerically the effect of the nonlocal parameter $\delta$ on the solution.  We set $h=0.0012$,  $\beta=0.08$ and keep the remaining settings apart from $\delta$ as in Example~1.  On Figure~\ref{fig:ex2} (right) we plot the snapshots of the phase-field solution for different values of $\delta$.  As expected, we observe for decreasing $\delta$ or equivalently increasing $\xi$ the interface becomes more diffuse and the nonlocal solution converges to the corresponding local solution. 
 \begin{figure}[ht!]
  \includegraphics[width=0.32\textwidth]{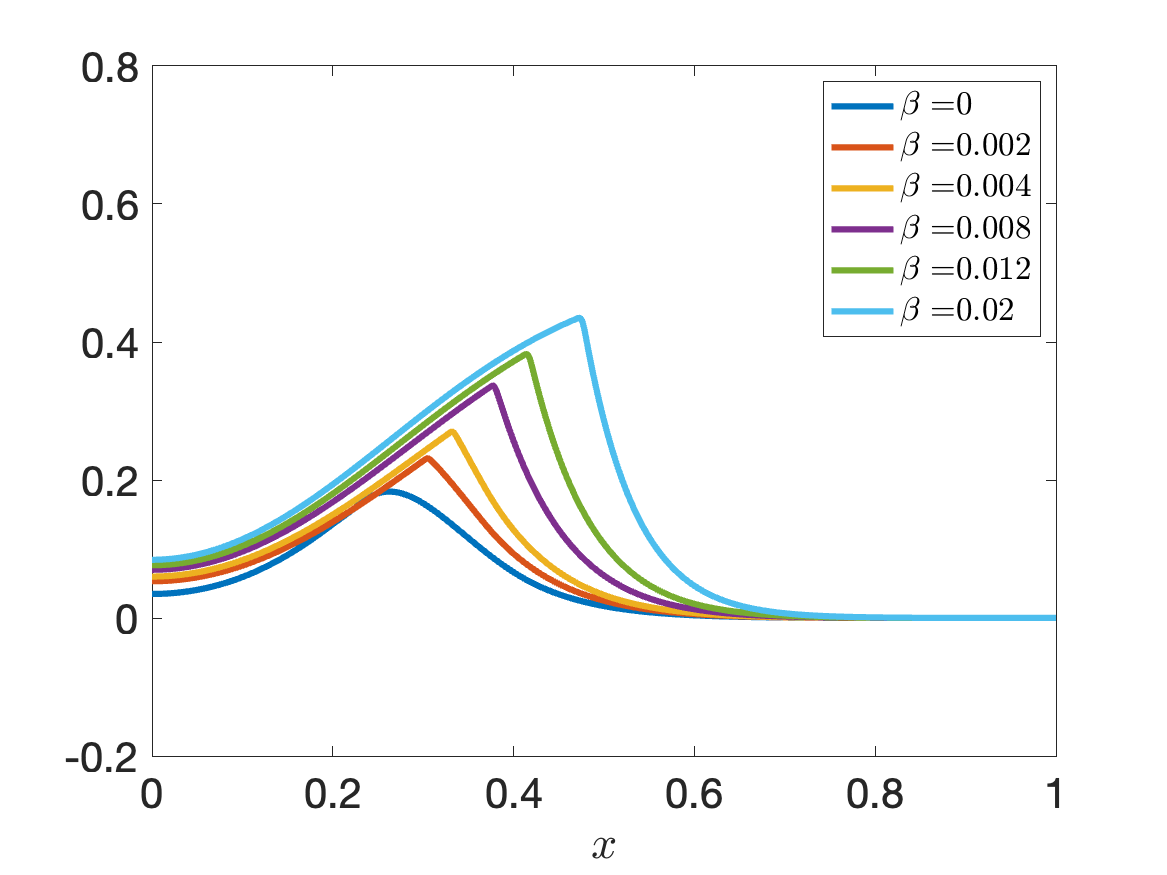}
   \includegraphics[width=0.32\textwidth]{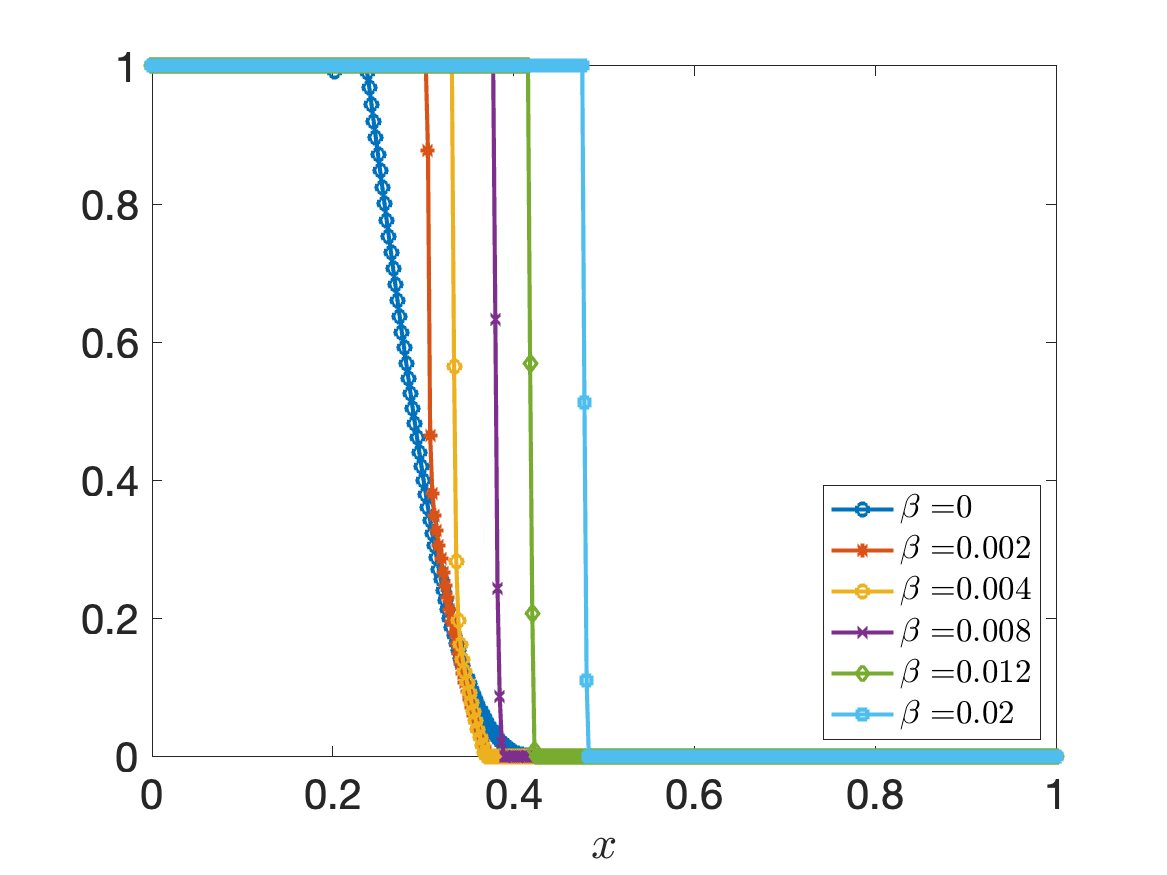}
    \includegraphics[width=0.32\textwidth]{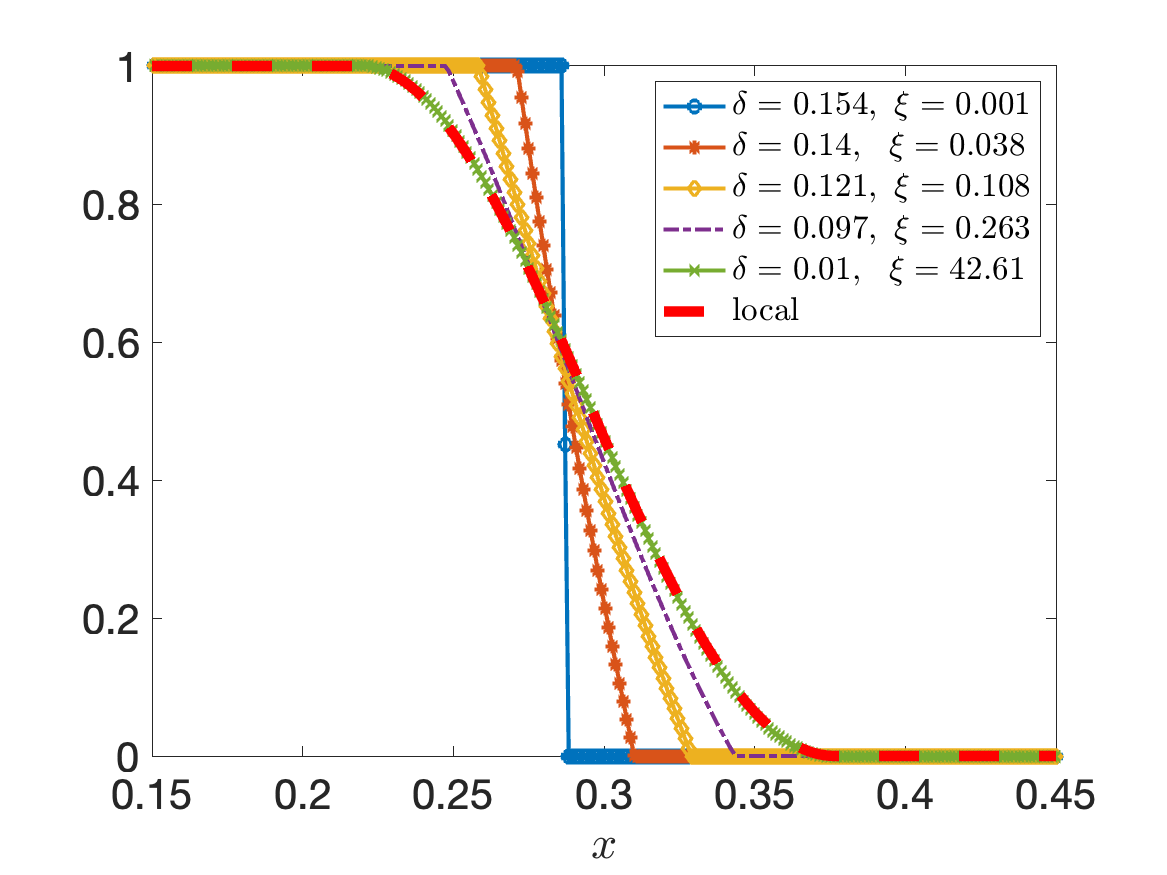}
 \caption{Snapshots of the temperature (left) and phase-field (middle) solutions of the nonlocal model at $t=0.017$ for different values of $\beta$ (Example~1). Right: Snapshots of the nonlocal and the local solutions at $t=0.0037$, $\beta=0.08$ (zoomed-in) (Example~2).}\label{fig:ex2}\end{figure} 
 \subsection*{Example~3}
Lastly,  we consider a two-dimensional example. This example is inspired by an example in~\cite{kobayashi1993},  which corresponds to solidification of pure materials, where we have a solid region around the boundaries of the domain and a pool of liquid in the interior. As the time evolves,  the solidification that occurs from the walls propagates inward until all liquid solidifies. We set the model parameters to
$\mu=0.0003$, 
$\t_e=1$, 
$\alpha =0.9$, 
$\rho=10$, 
$L=0.5$, 
$D=1$, 
$\e=0.01$. For discretization we use $\tend=0.03$,
$\tt=0.0001$,
$h\approx 0.0048$, and we also set
$\beta=0.002$ and $\delta=0.0826$,  which corresponds to
$\xi=0.0093$. 
The initial conditions are 
\begin{equation}
\p^0(\x,\xx)=\begin{cases}
1,\quad 0.1<\x<0.9,\quad 0.1<\xx<0.9\\
0,\quad\text{otherwise},
\end{cases}\qquad
\t^0(\x,\xx)=0.
\end{equation}
Phase-field and temperature solutions are presented on Figure~\ref{fig:ex5}--\ref{fig:ex5a}, where we also include solutions of the local model with the regular potential~\eqref{potential_regular}.  We observe that the nonlocal model with $\beta>0$ delivers the solution with the sharpest interface, whereas the most diffuse interface occurs in the local model with the regular potential. 
The corresponding width of the interface region is depicted on Figure~\ref{fig:ex5b}. We observe that the interface width in the nonlocal Cahn-Hilliard case spans approximately $1-2$ grid cells, while in the nonlocal and local Allen-Cahn solutions this varies between $16-18$ and $18-20$ grid cells, respectively.
 We also notice that the interface thickness affects the speed of the interface which is different for all solutions, and is explained by the fact that the same set of parameters is used for all models (cf.  Remark~\ref{rem:1}).
 \begin{figure}[ht!]\centering
   \includegraphics[width=0.2\textwidth]{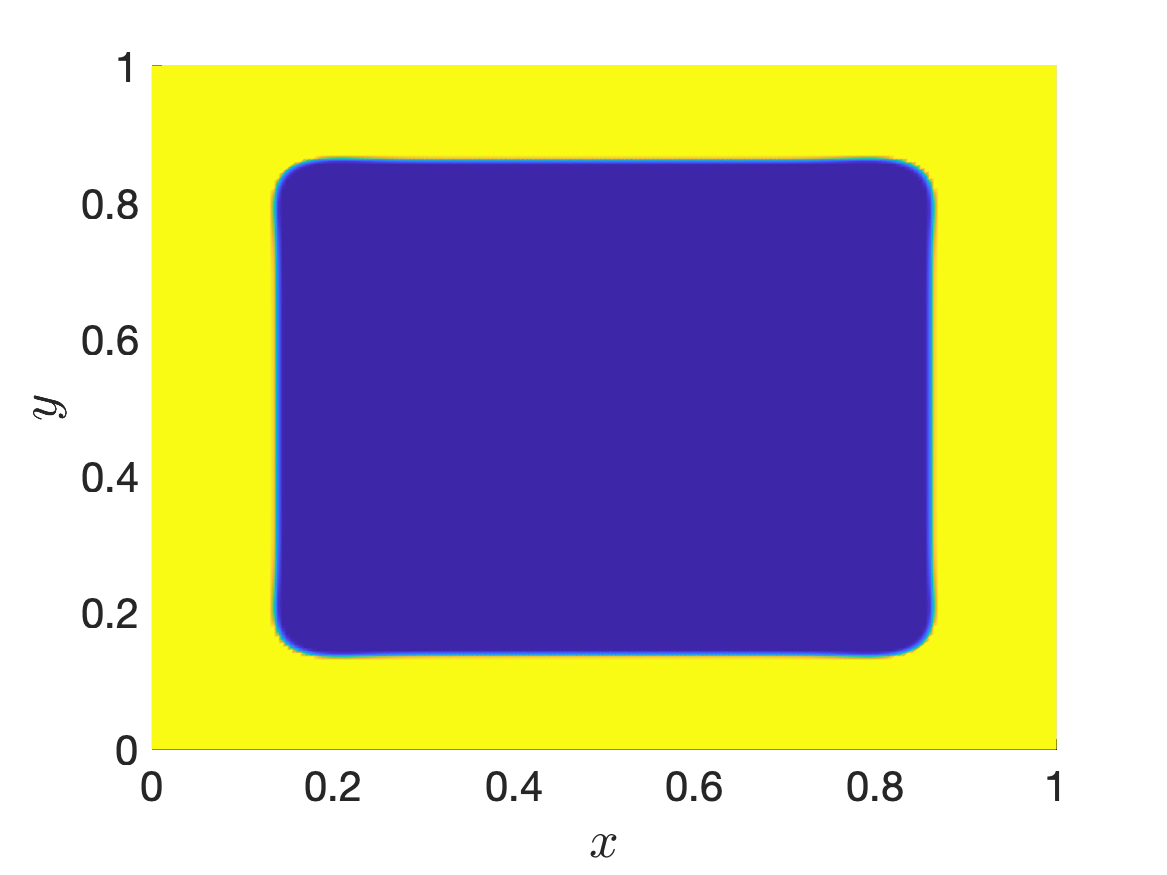}
    \includegraphics[width=0.2\textwidth]{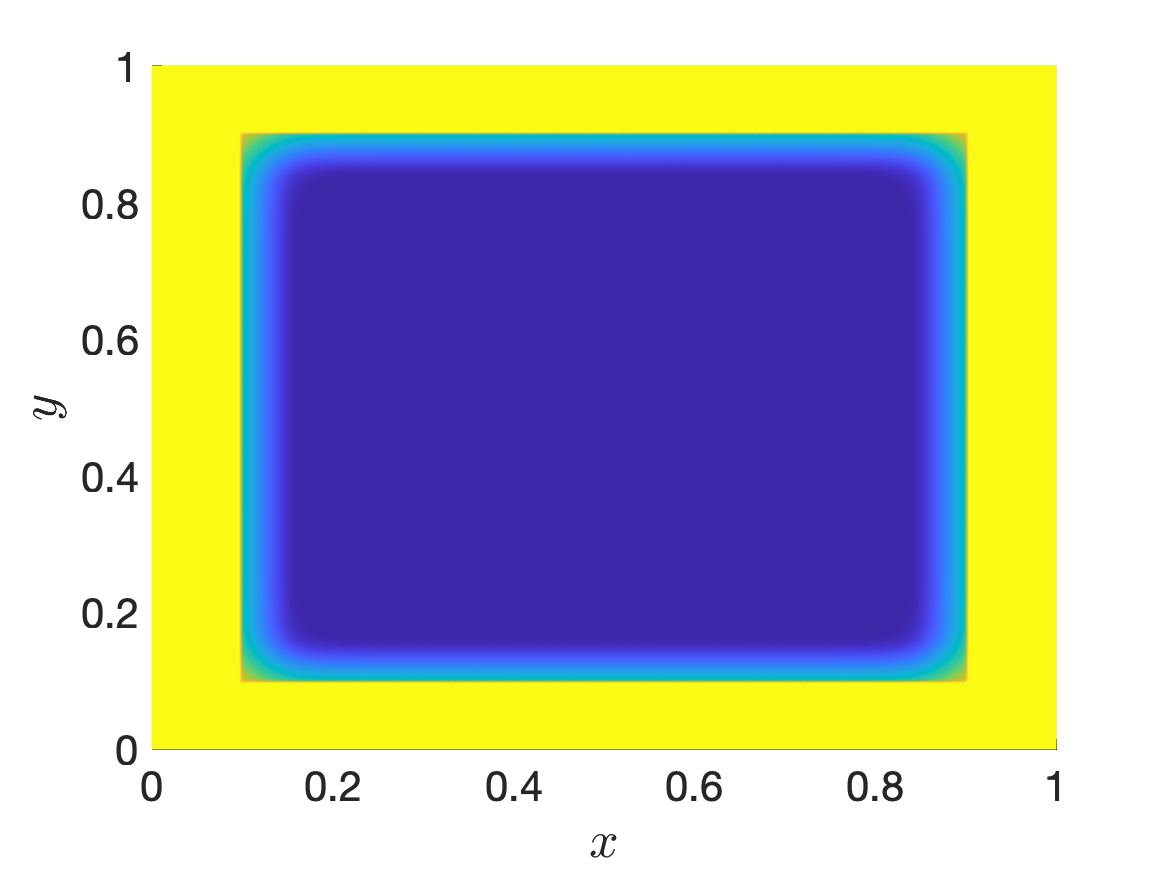}
   \includegraphics[width=0.2\textwidth]{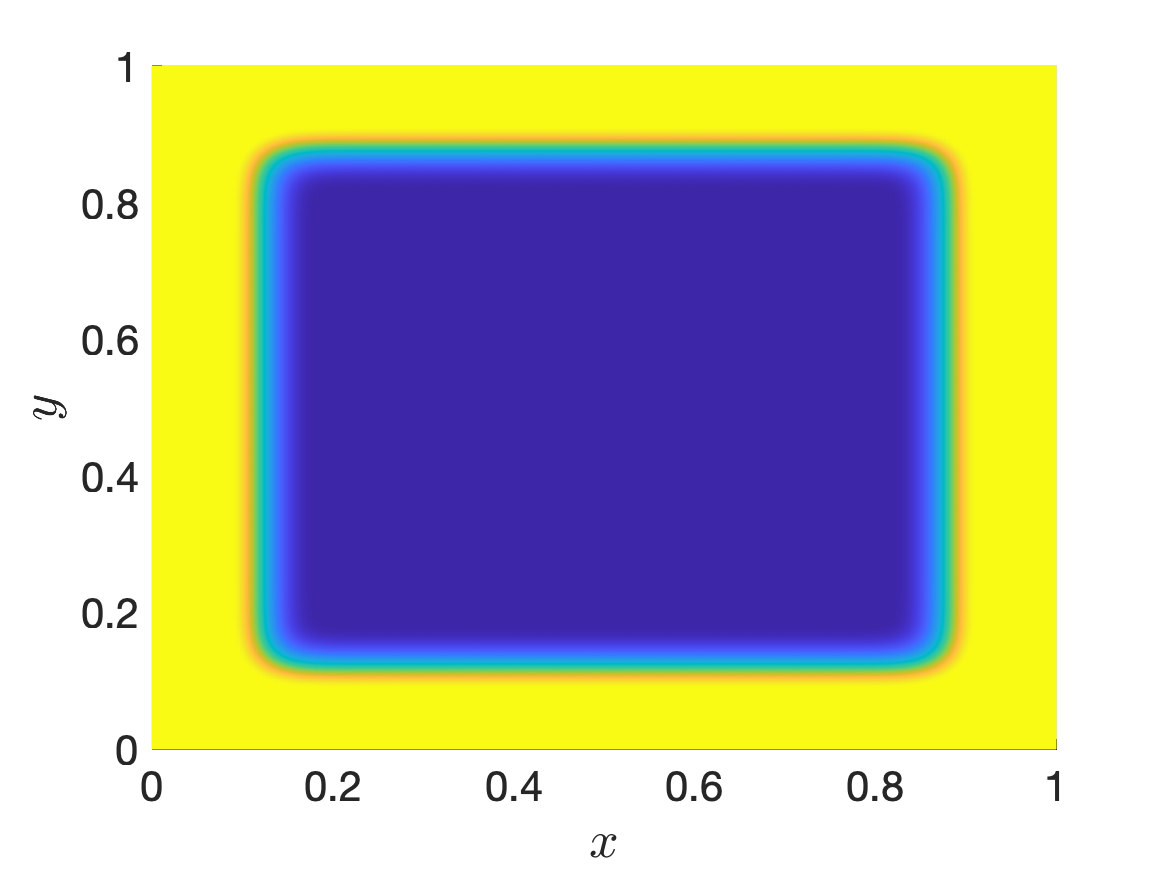}
   \includegraphics[width=0.2\textwidth]{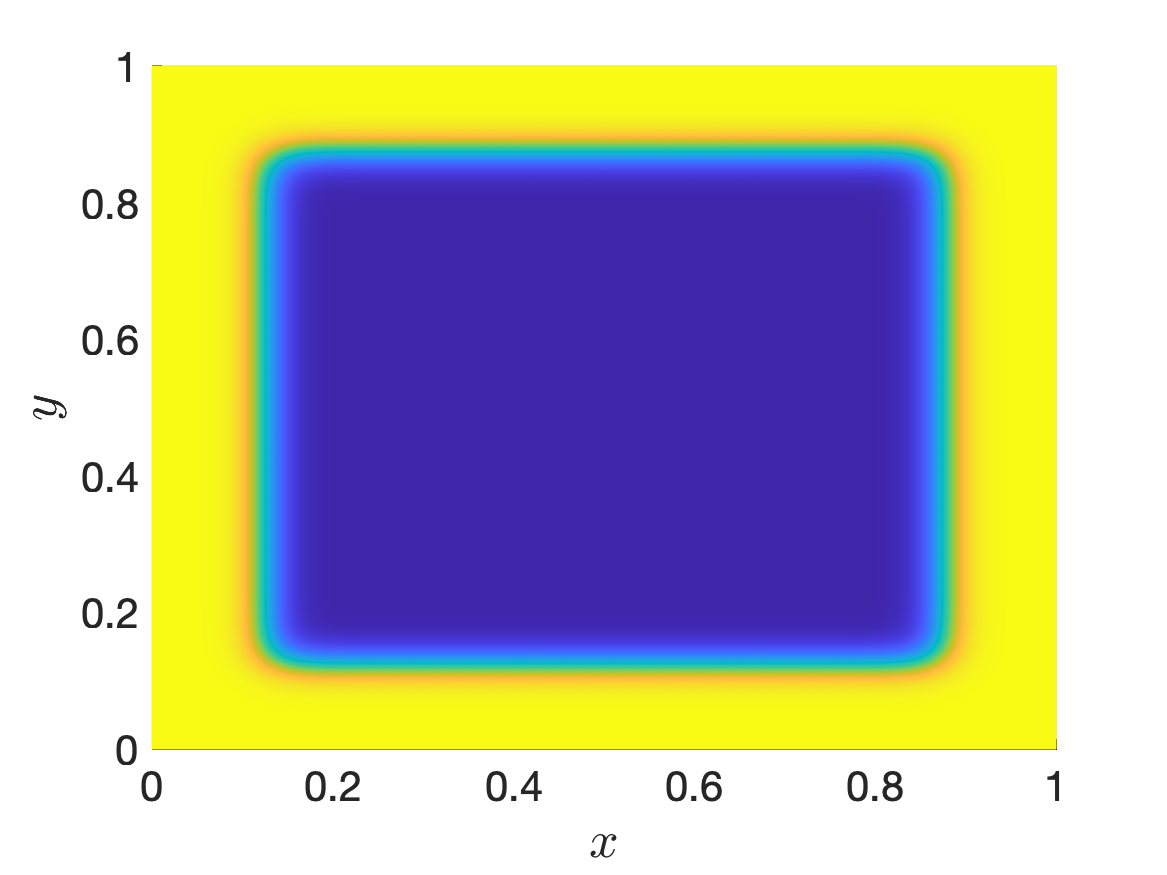}\\
       \includegraphics[width=0.2\textwidth]{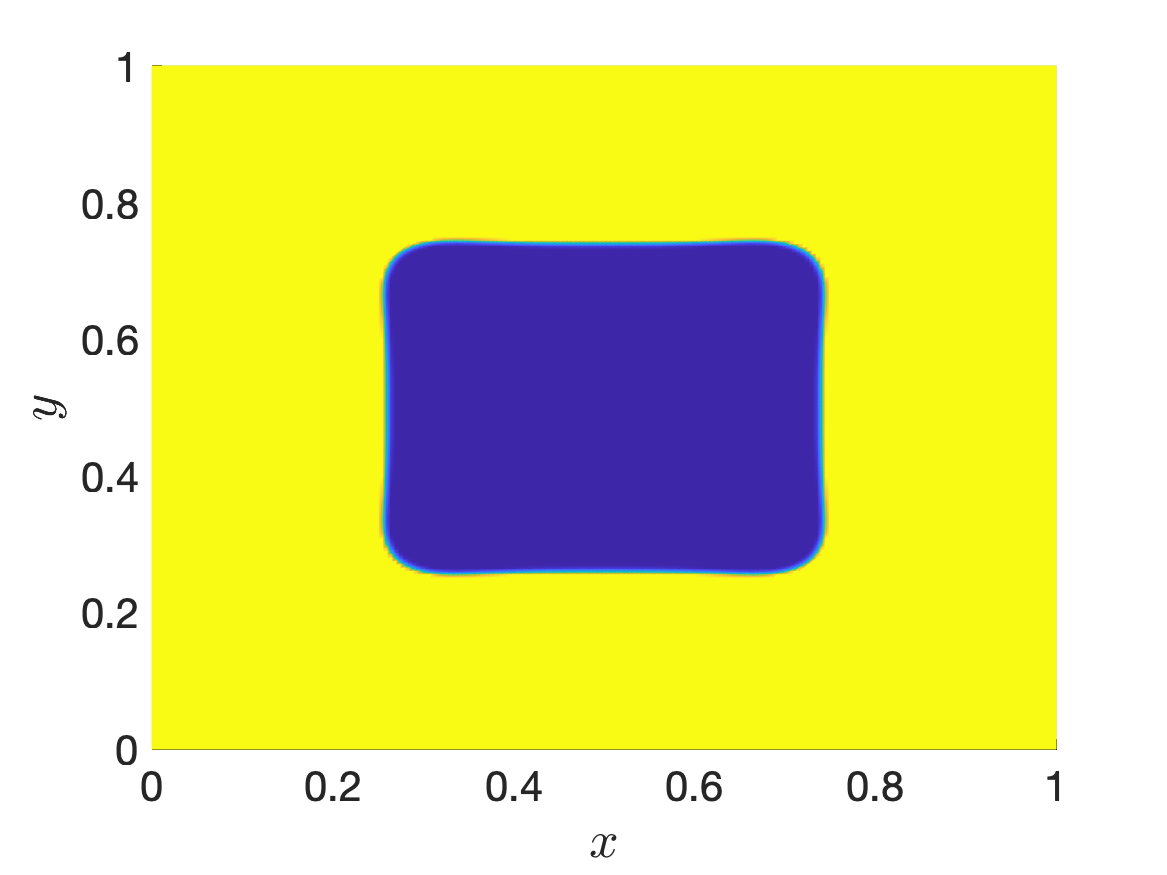}
   \includegraphics[width=0.2\textwidth]{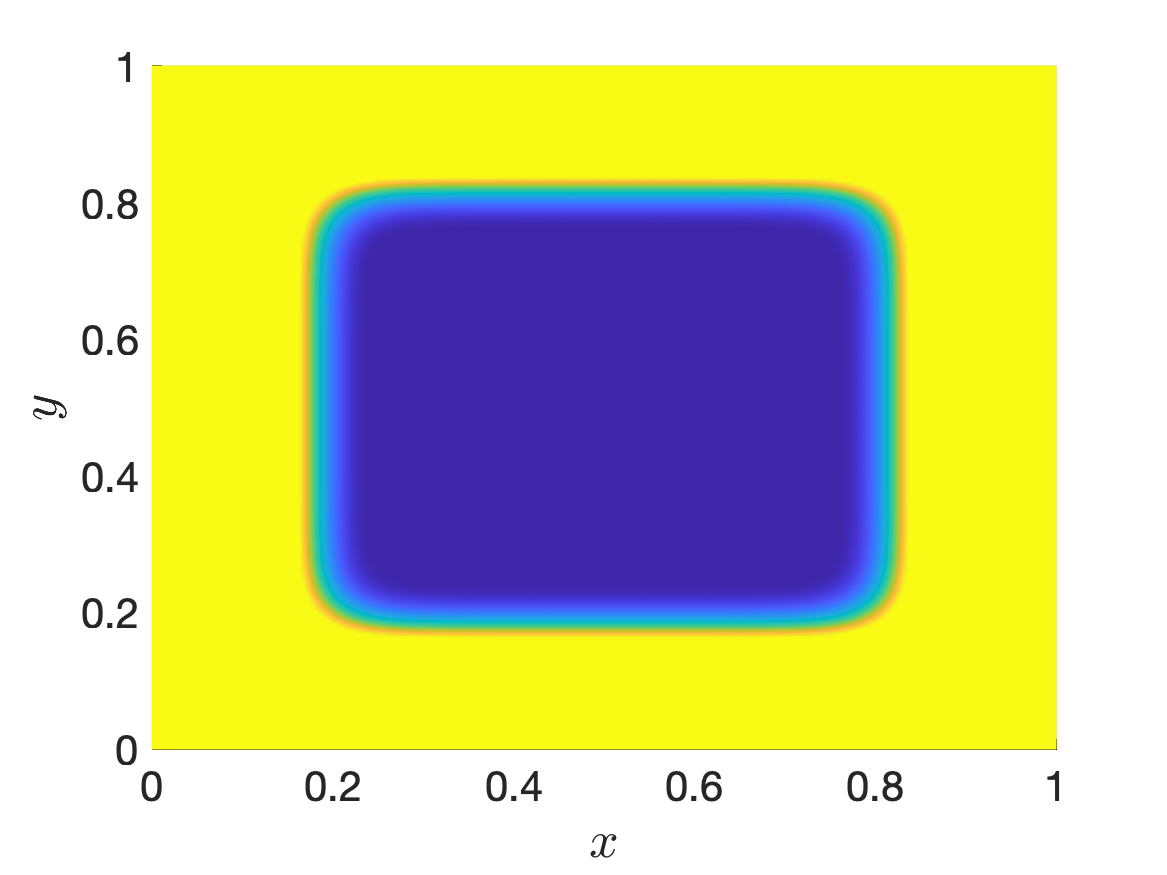}
    \includegraphics[width=0.2\textwidth]{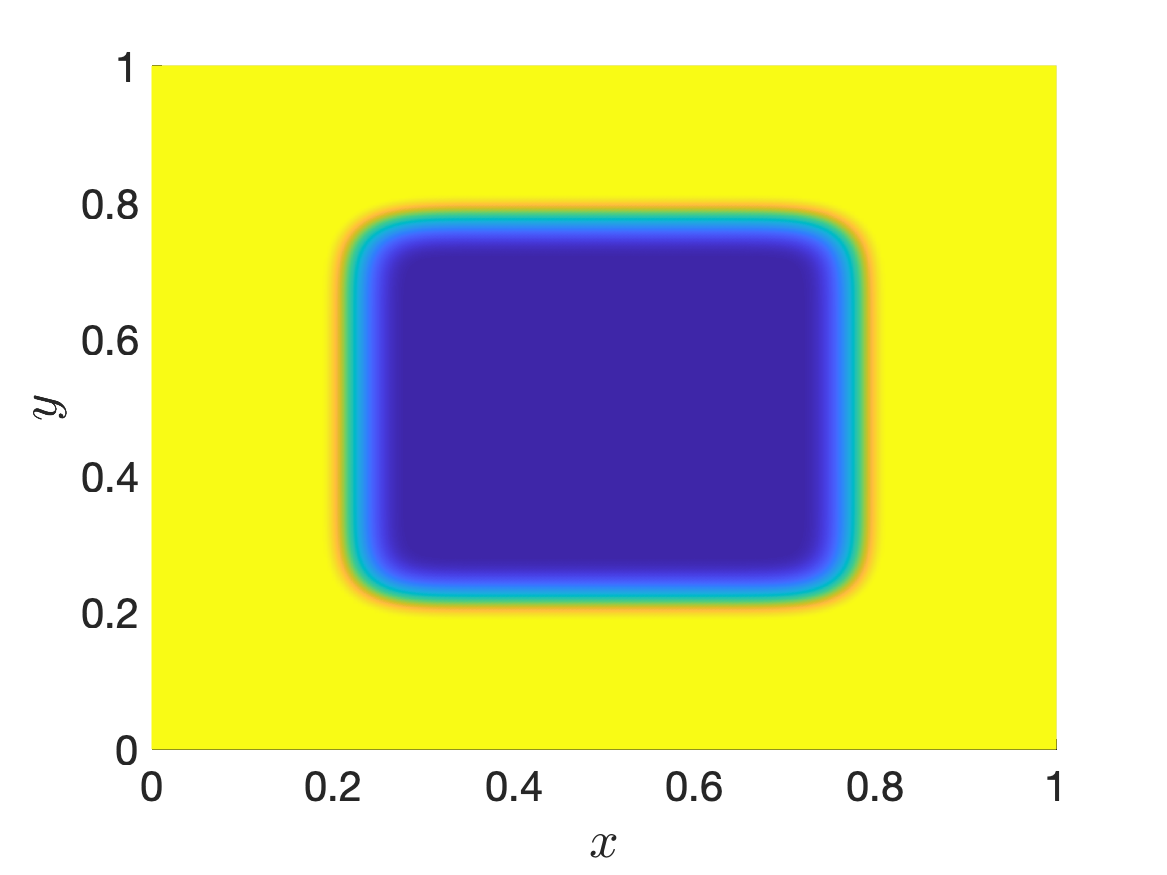}
     \includegraphics[width=0.2\textwidth]{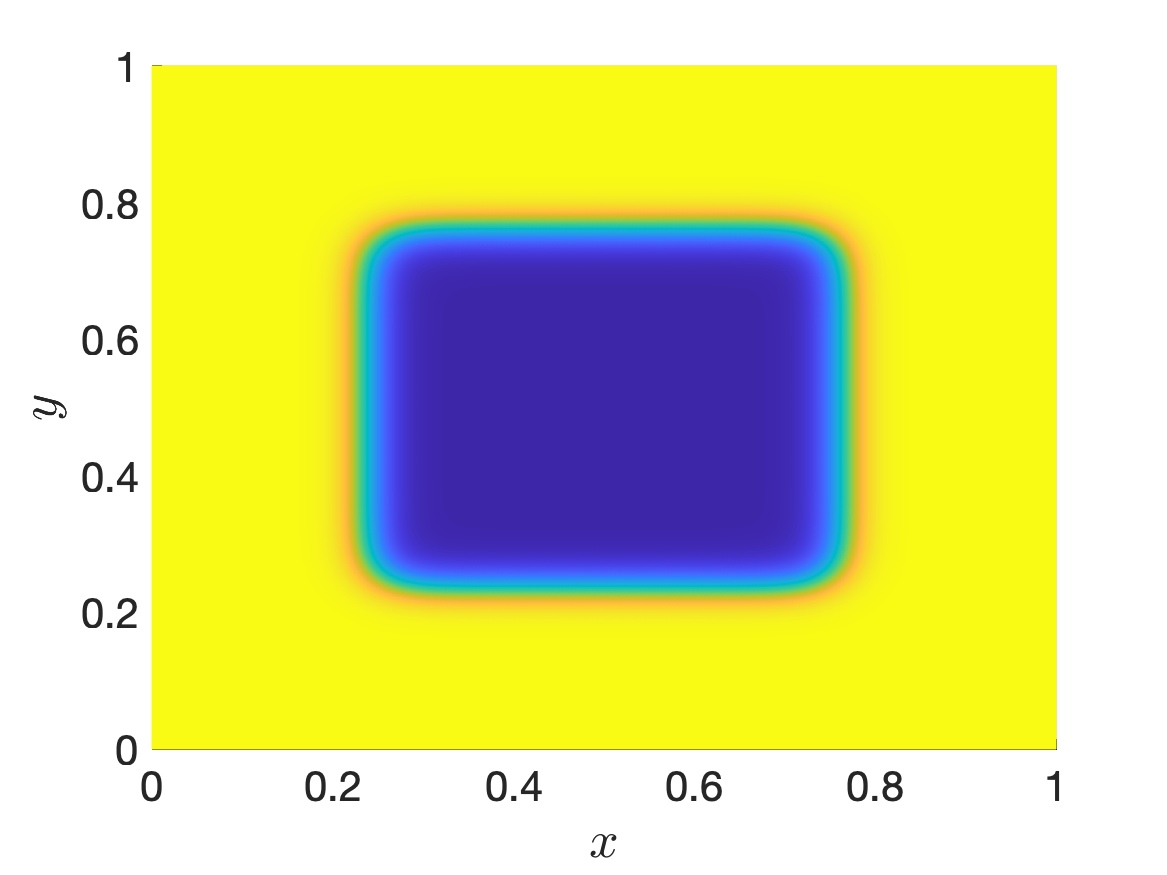}\\
   \includegraphics[width=0.2\textwidth]{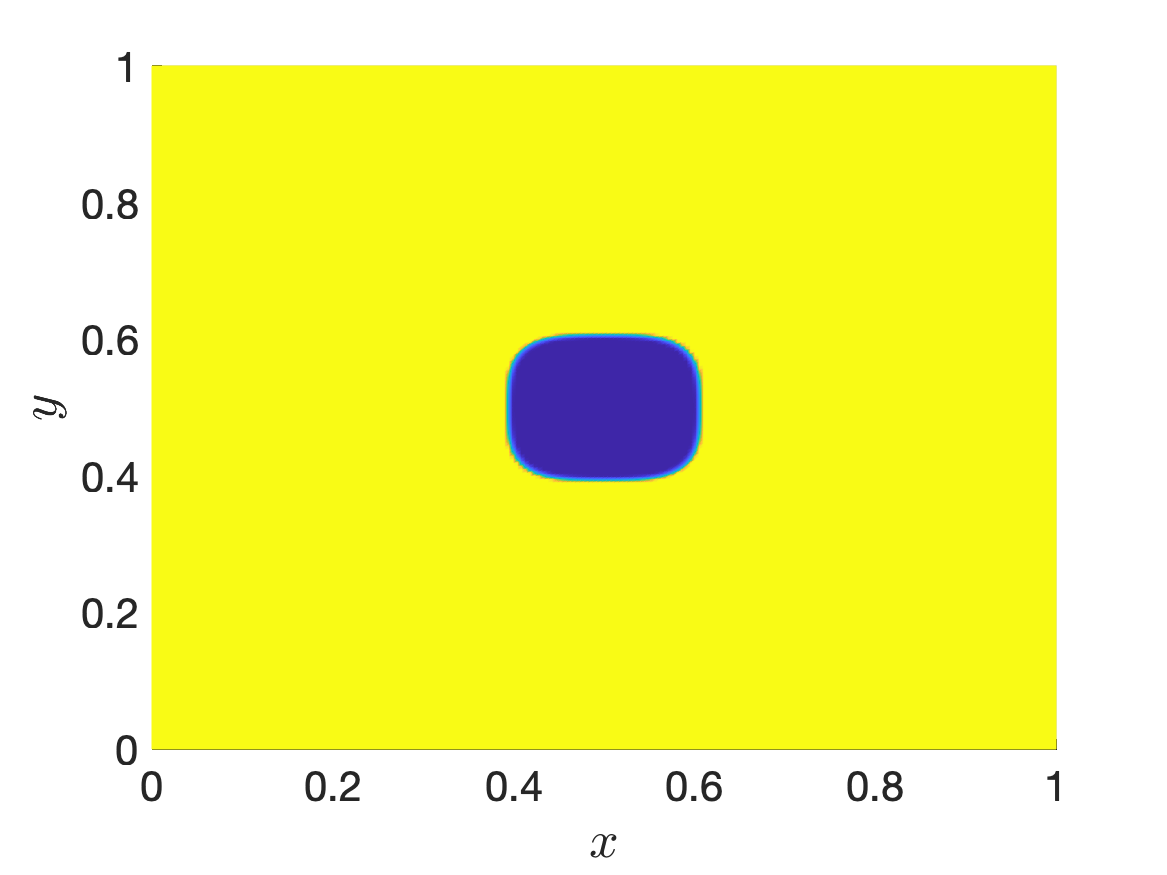}
    \includegraphics[width=0.2\textwidth]{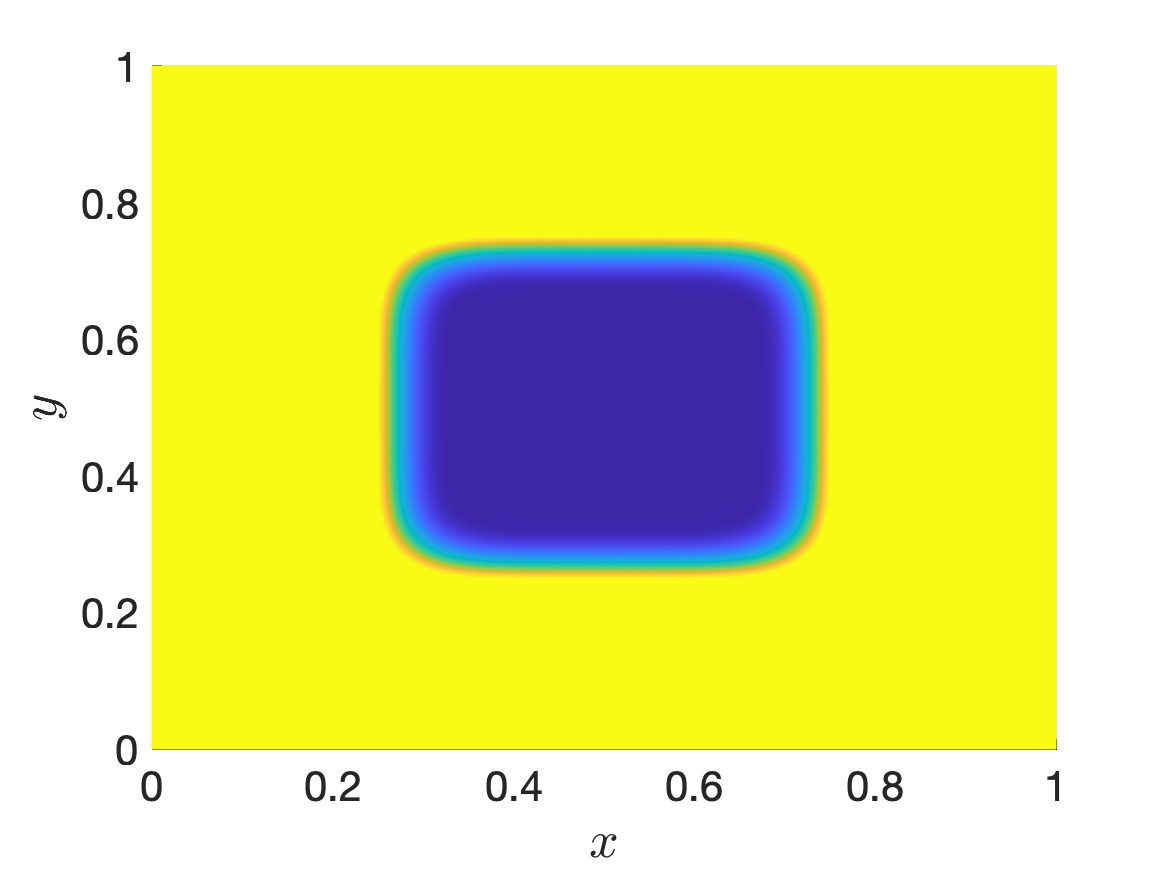}
    \includegraphics[width=0.2\textwidth]{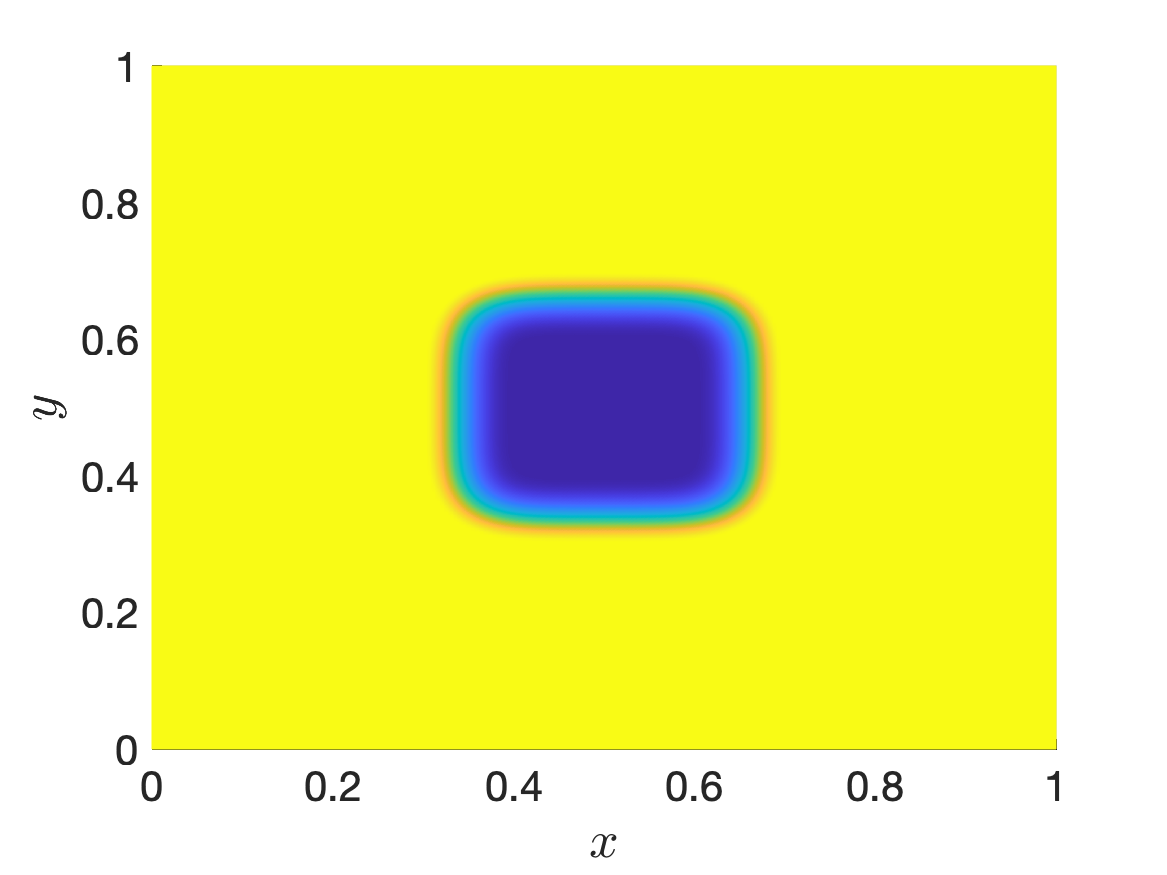}
    \includegraphics[width=0.2\textwidth] {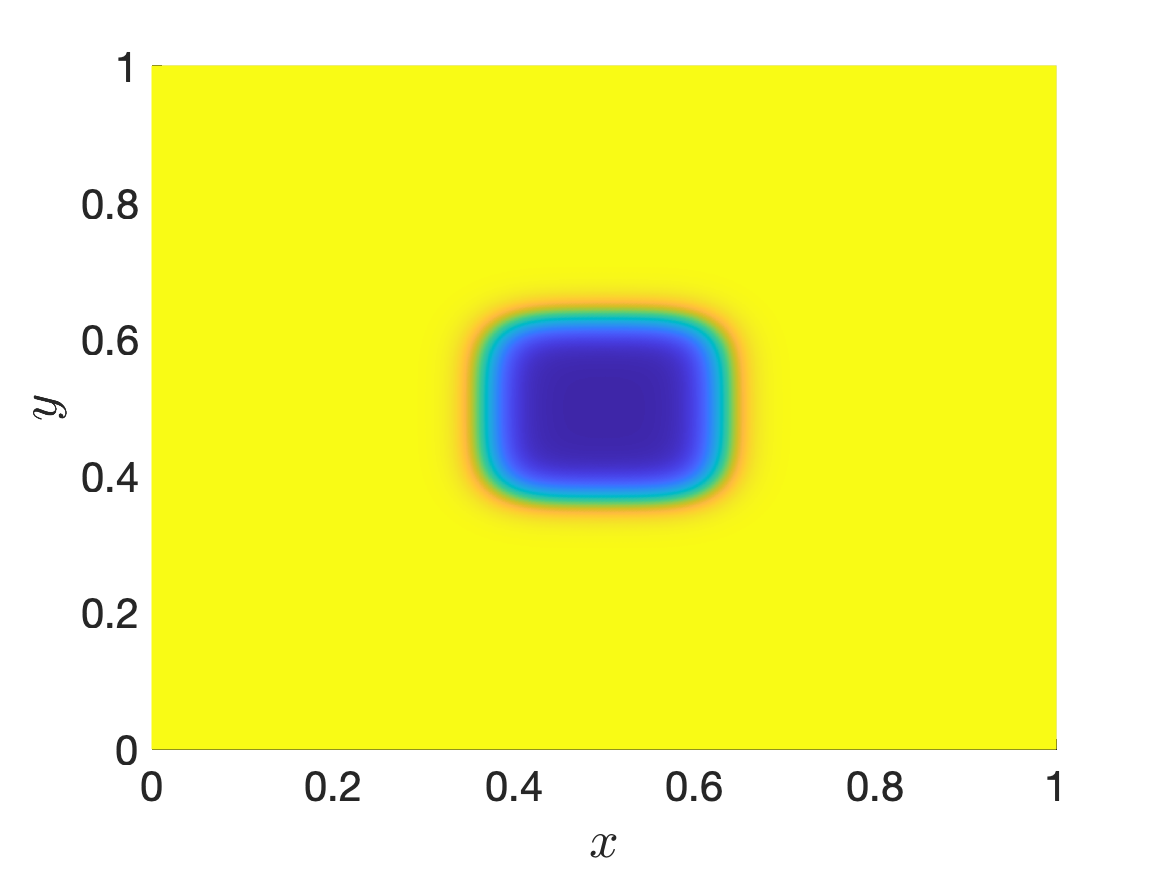}
 \caption{Snapshots of the phase-field solutions at $t=[0.002,    0.008,    0.015]$ (from top to bottom). From left to right: nonlocal model with an obstacle potential for $\beta=0.002$ (first column) and $\beta=0$ (second column),  local $(\beta=0)$ model with an obstacle potential (third column) and regular potential (fourth column).}\label{fig:ex5}
 \end{figure} 
  \begin{figure}[ht!]\centering
   \includegraphics[width=0.2\textwidth]{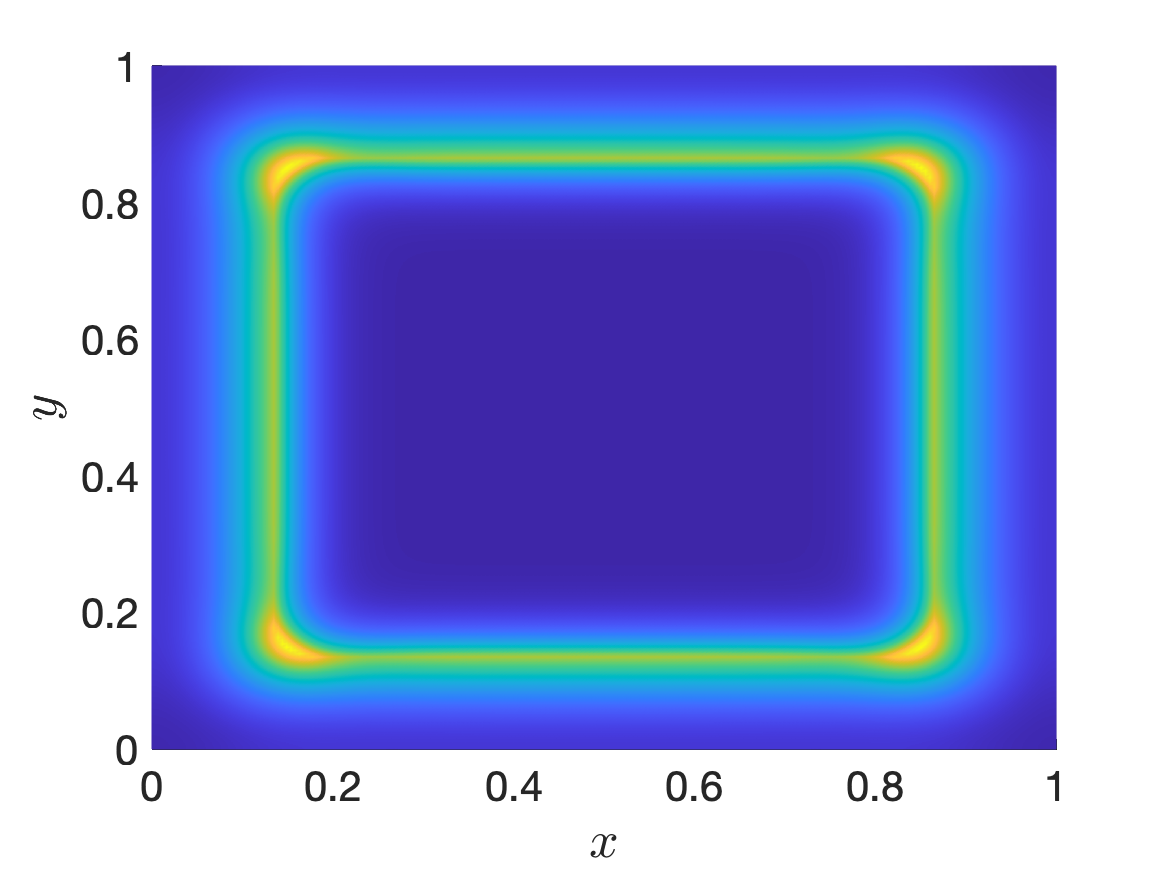}
    \includegraphics[width=0.2\textwidth]{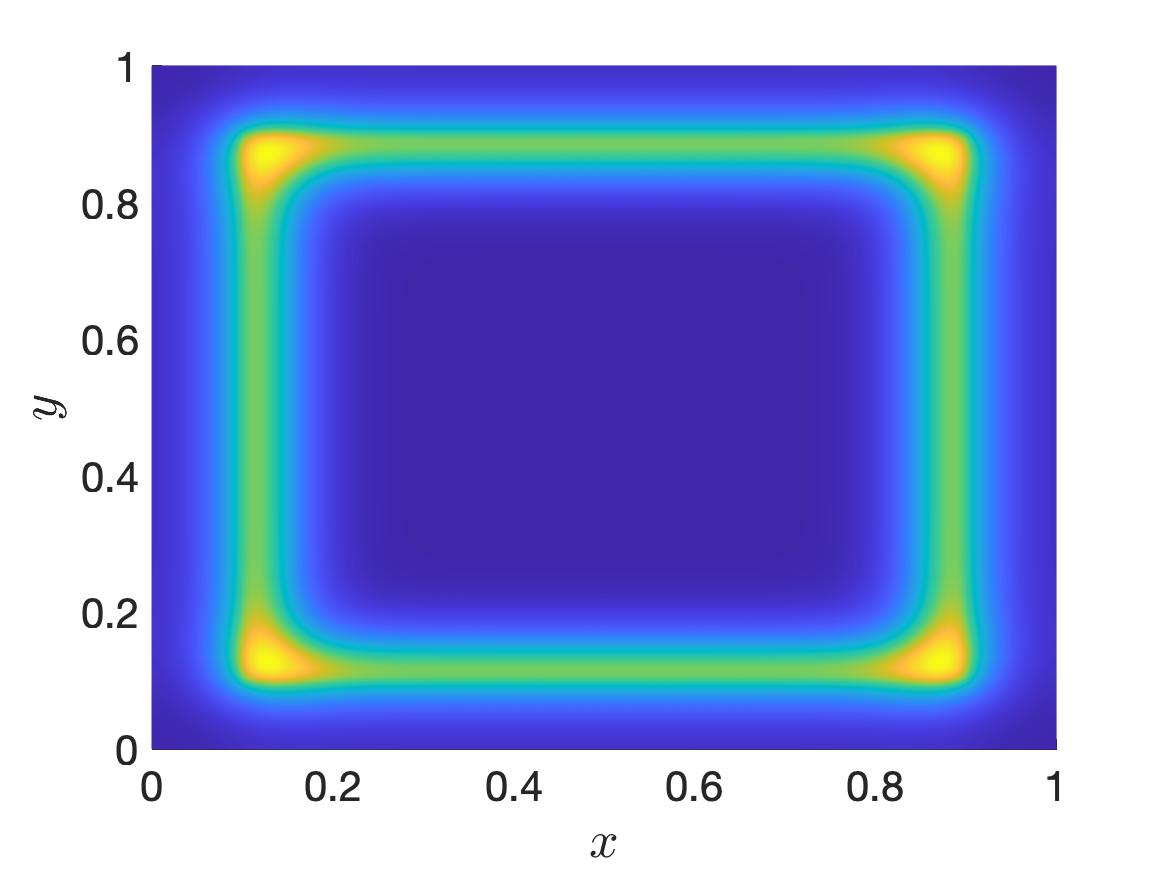}
   \includegraphics[width=0.2\textwidth]{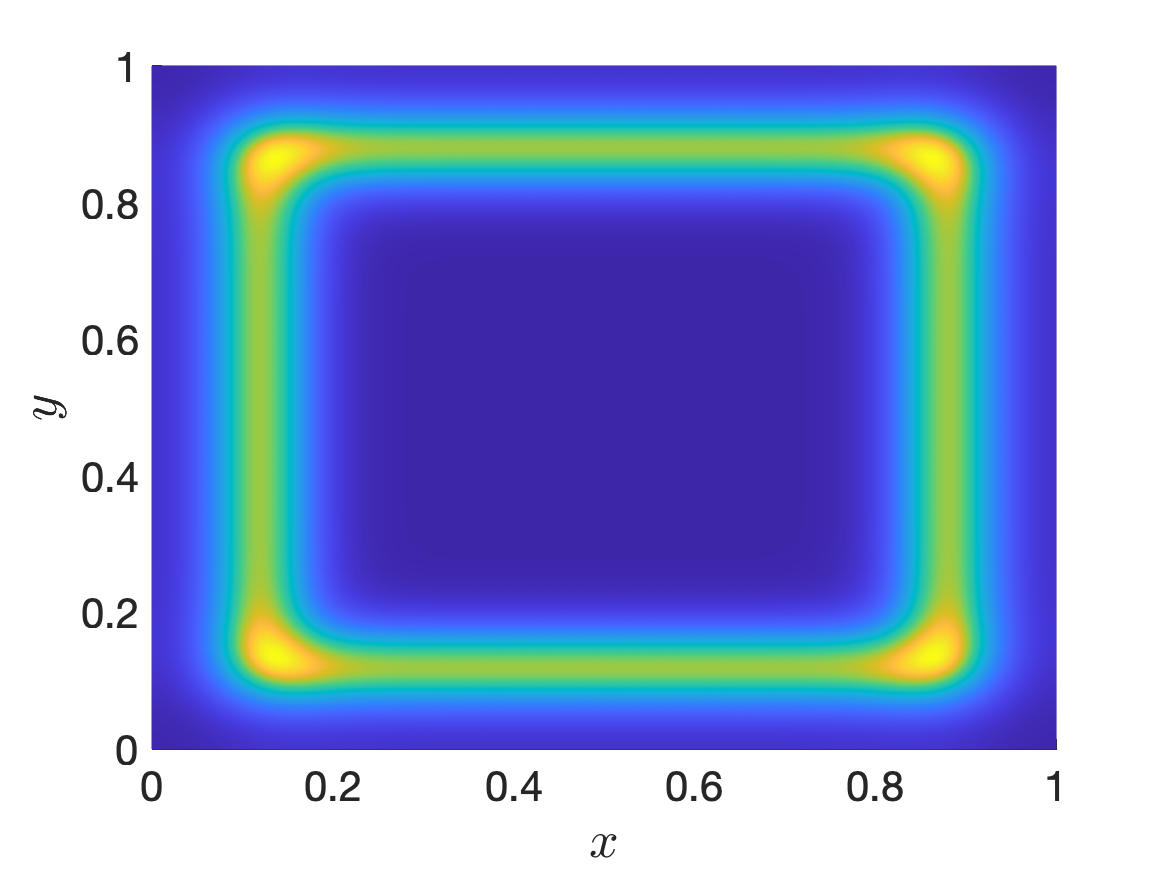}
   \includegraphics[width=0.2\textwidth]{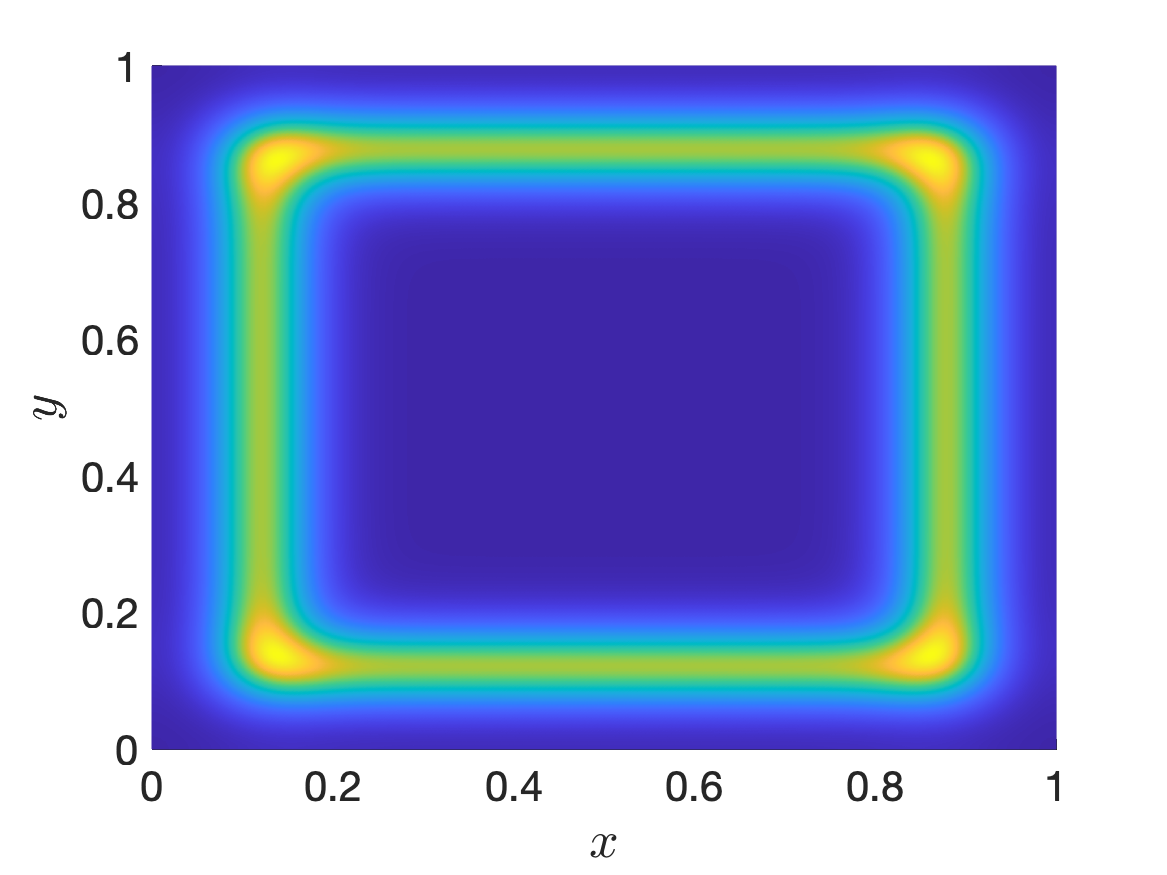}\\
       \includegraphics[width=0.2\textwidth]{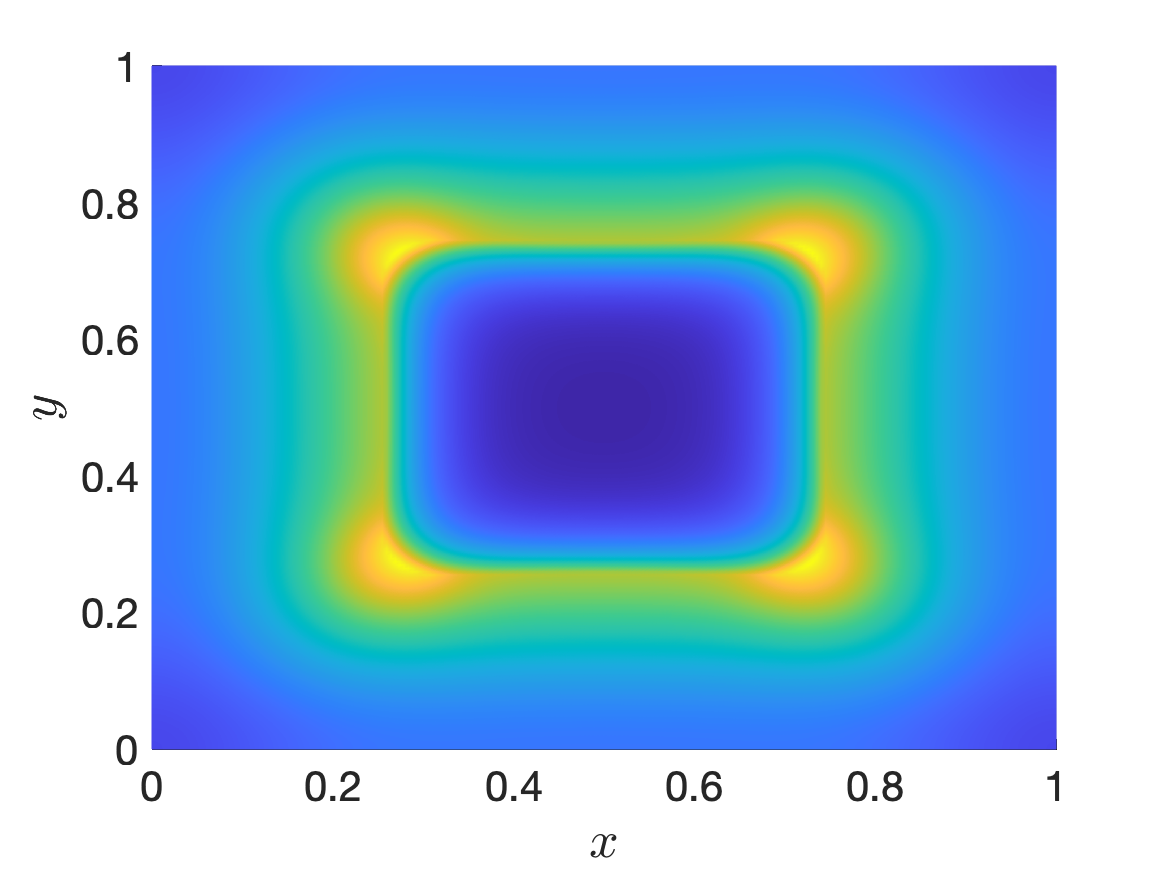}
   \includegraphics[width=0.2\textwidth]{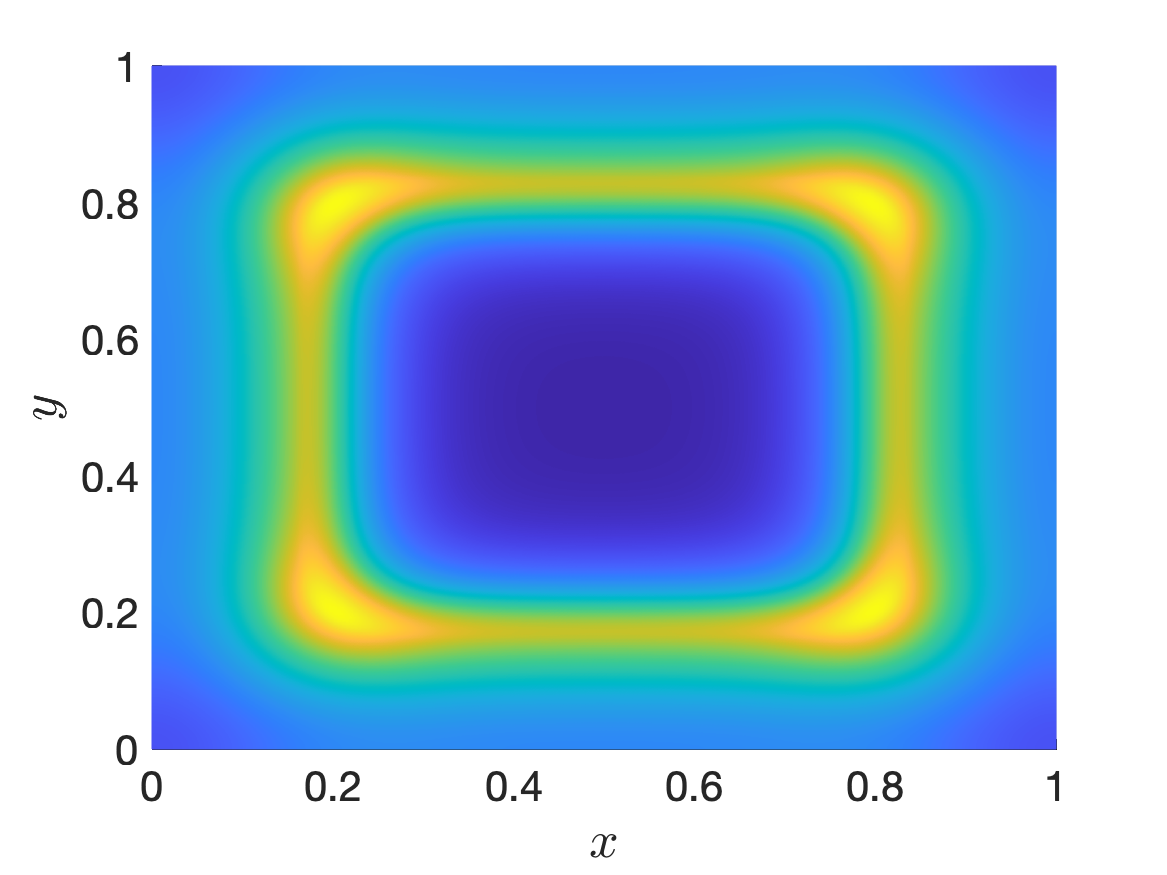}
    \includegraphics[width=0.2\textwidth]{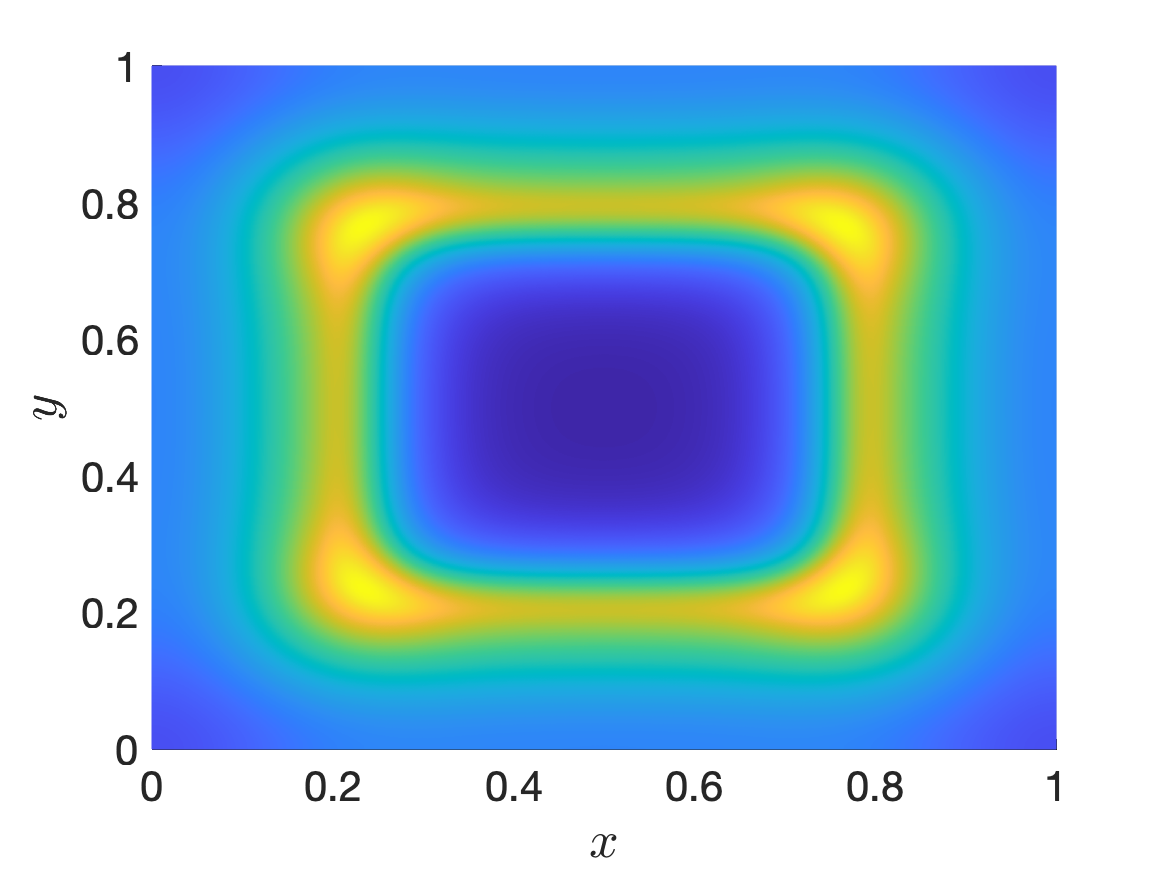}
     \includegraphics[width=0.2\textwidth]{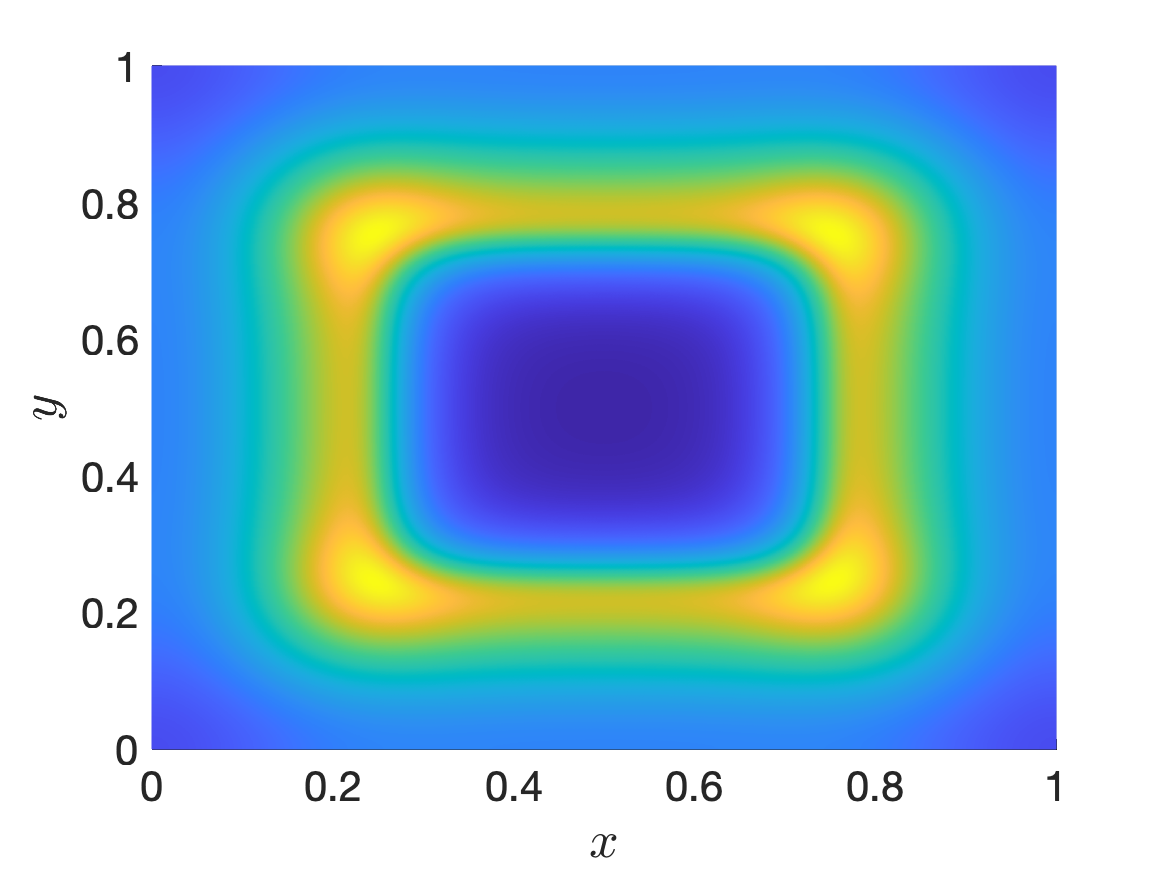}\\
   \includegraphics[width=0.2\textwidth]{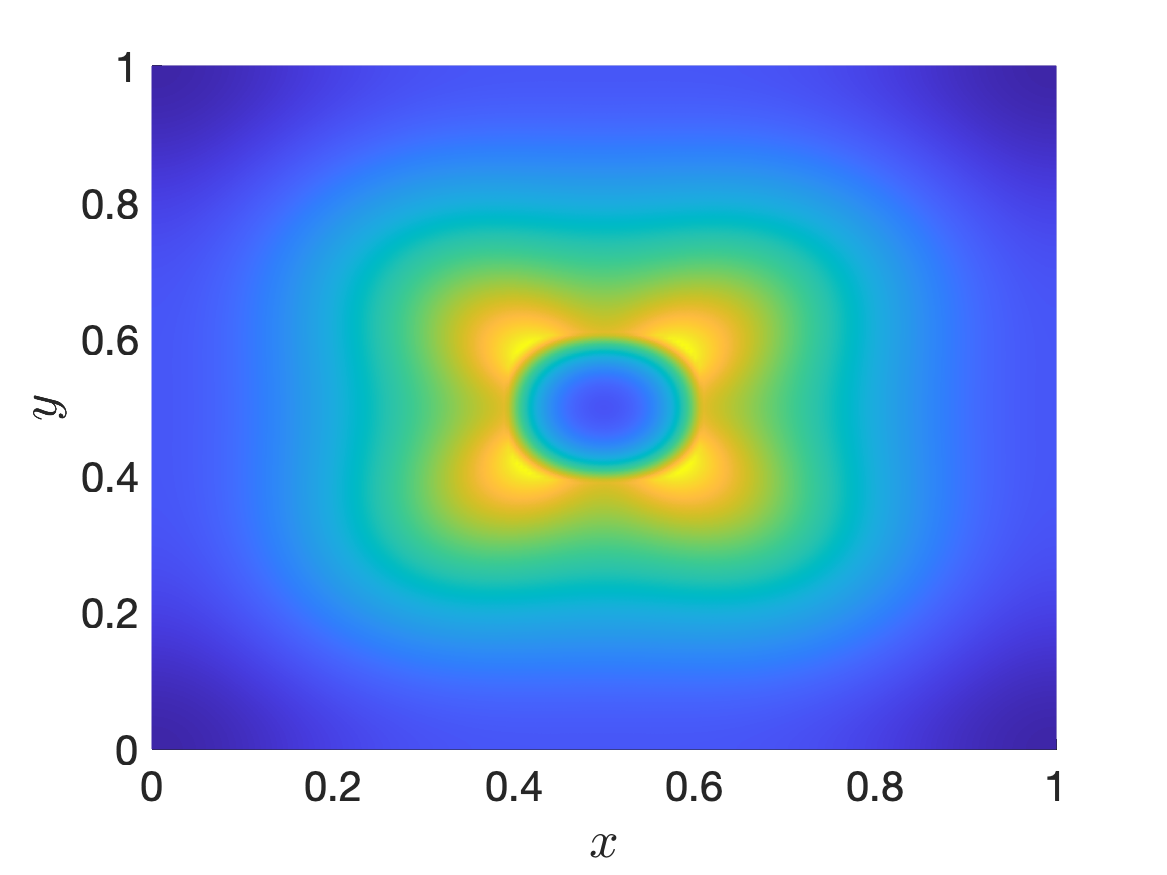}
    \includegraphics[width=0.2\textwidth]{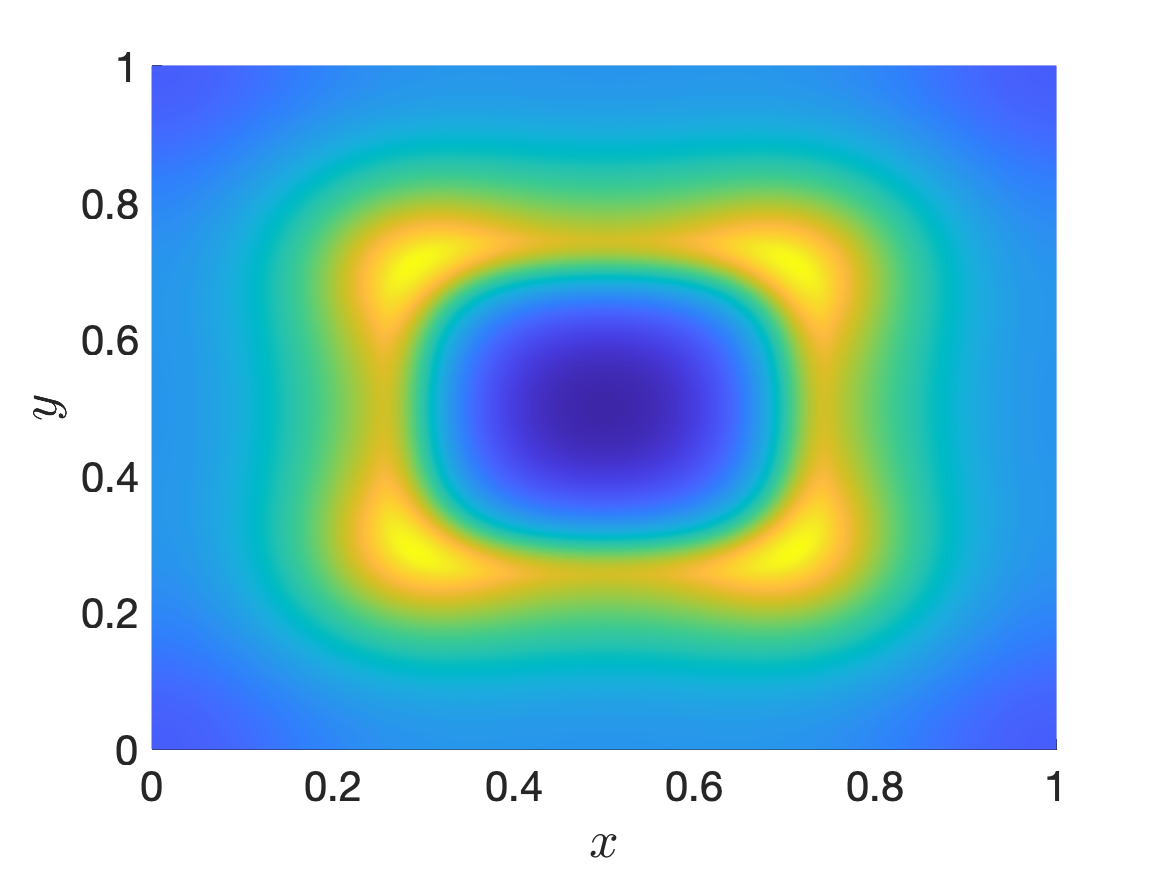}
    \includegraphics[width=0.2\textwidth]{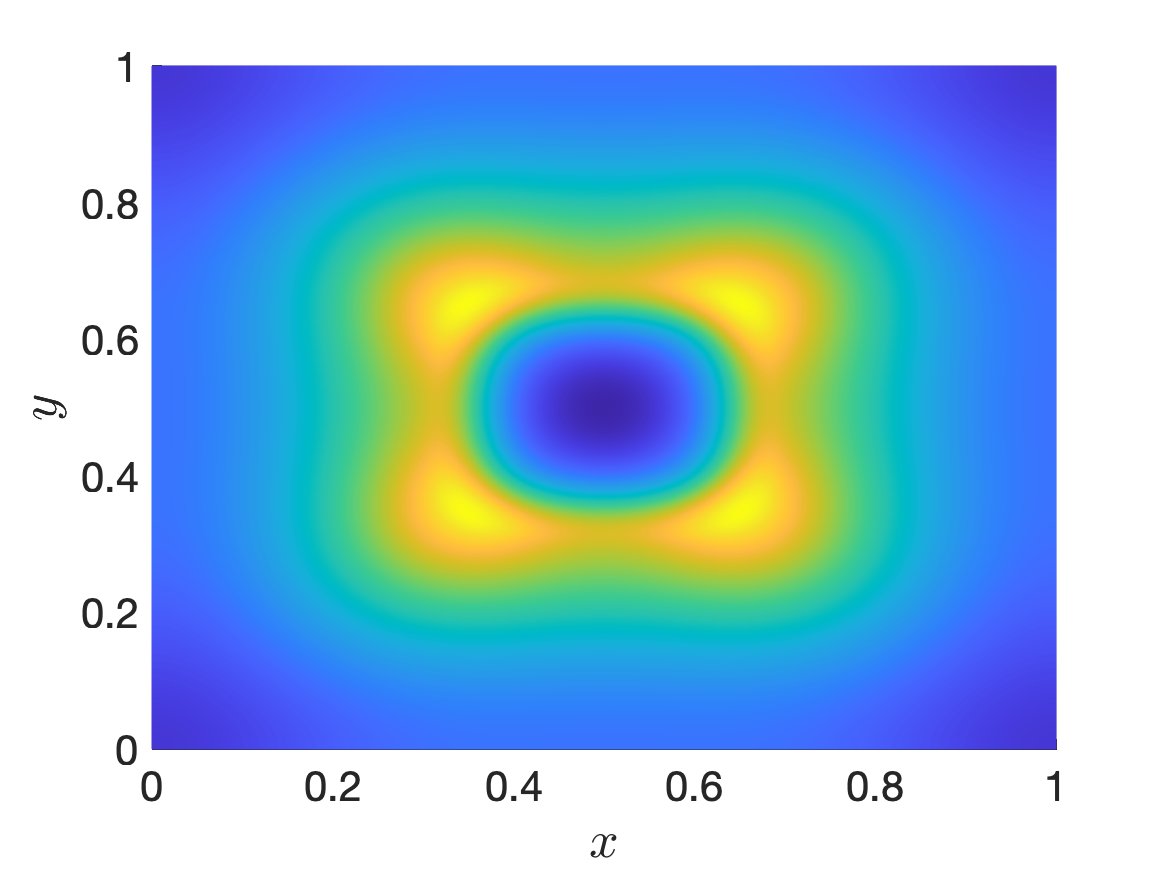}
    \includegraphics[width=0.2\textwidth] {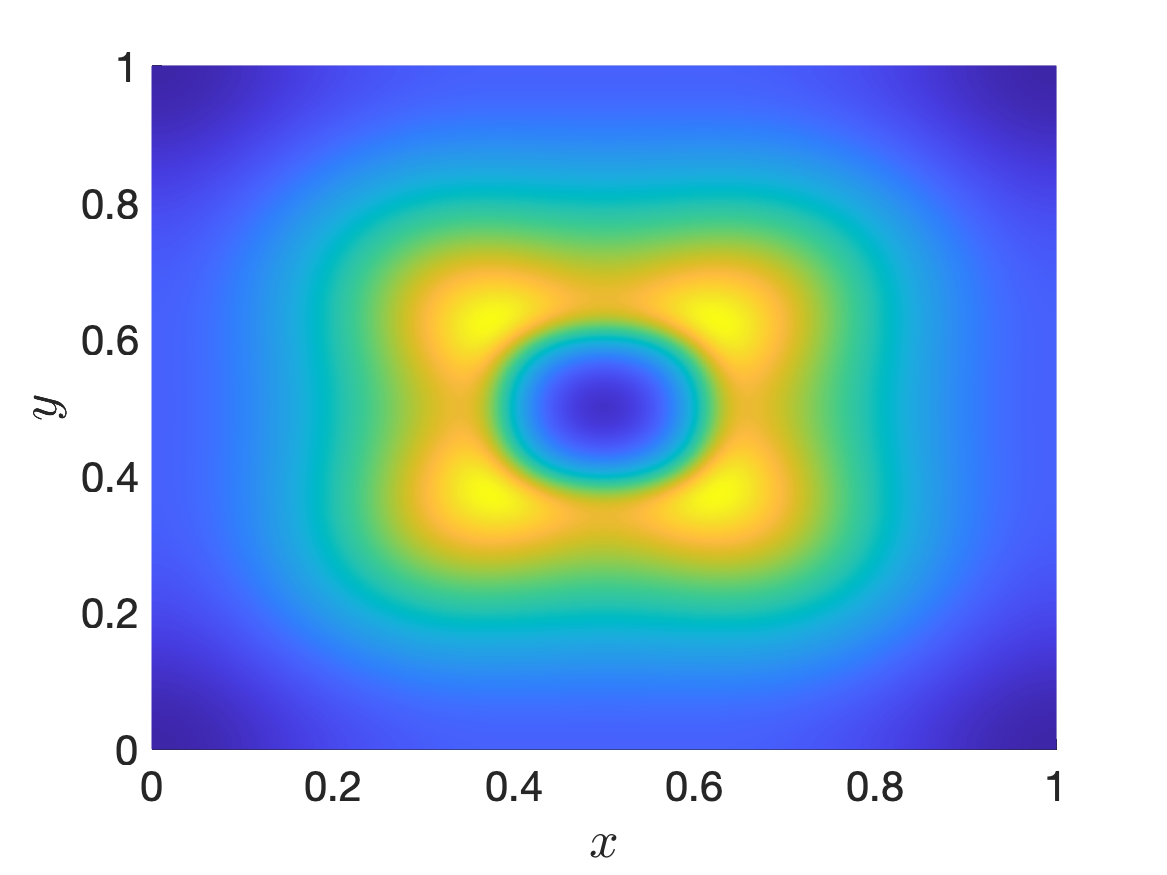}
 \caption{Snapshots of the temperature solutions at $t=[0.002,    0.008,    0.015]$ (from top to bottom). From left to right: nonlocal model with an obstacle potential for $\beta=0.002$ (first column) and $\beta=0$ (second column),  local $(\beta=0)$ model with an obstacle potential (third column) and regular potential (fourth column).}\label{fig:ex5a}
 \end{figure} 

  \begin{figure}[ht!]\centering
  \includegraphics[width=0.28\textwidth]{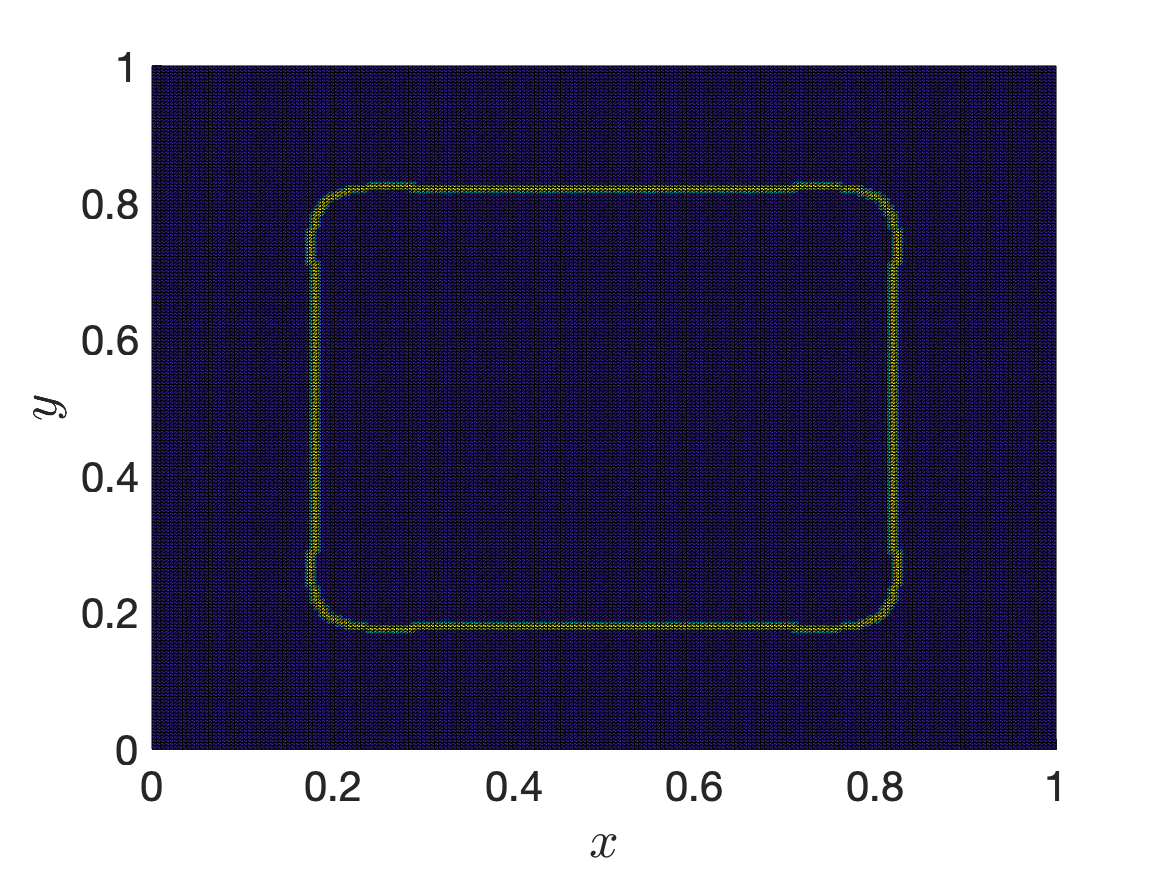}
   \includegraphics[width=0.28\textwidth]{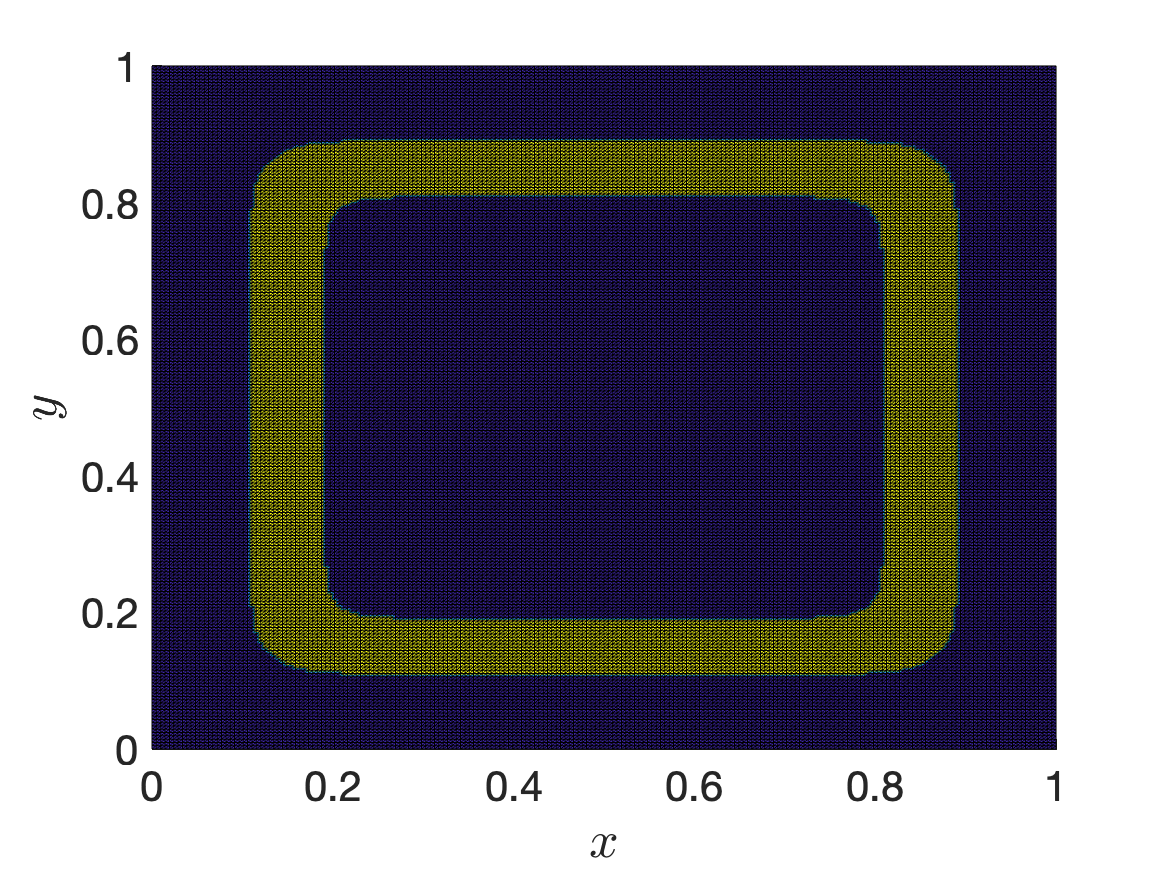}
       \includegraphics[width=0.28\textwidth]{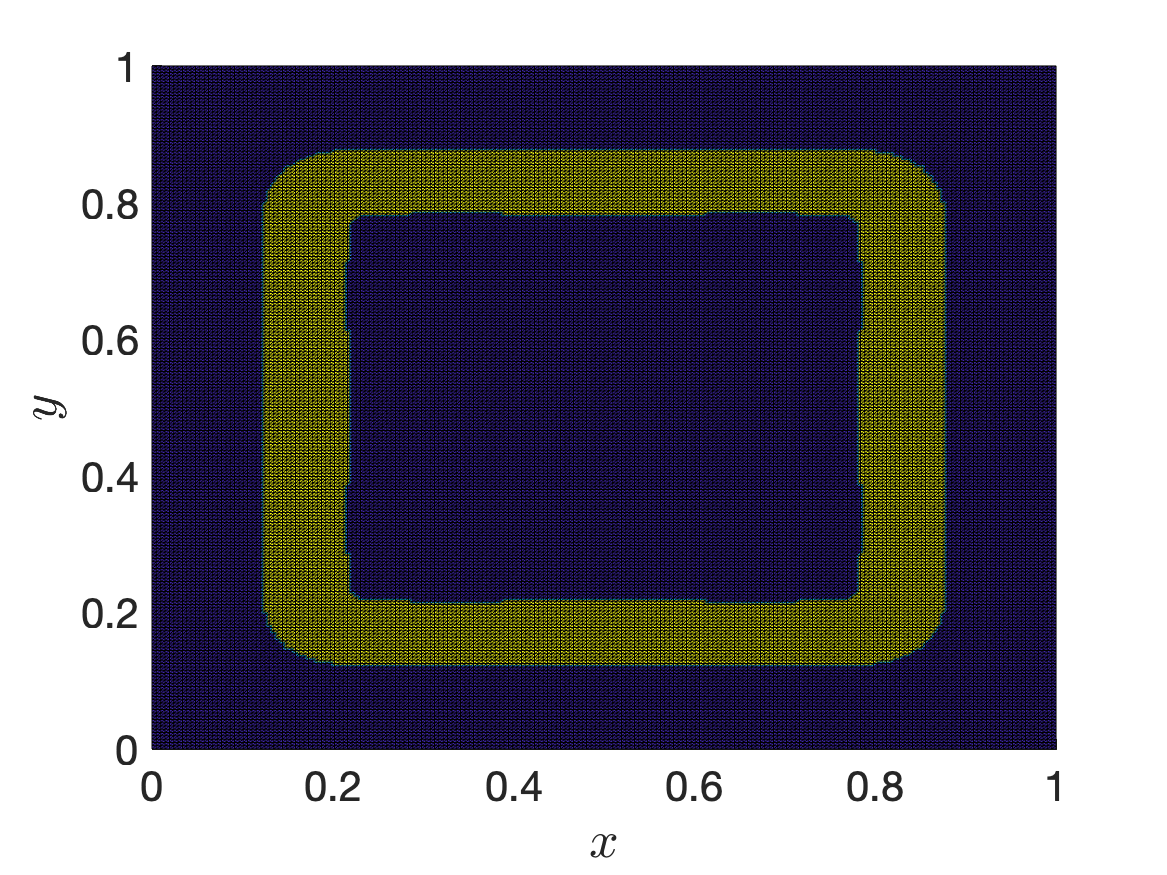}
 \caption{Width of the interface region, $0<u<1$, of the nonlocal ($\beta=0.002$ (left), $\beta=0$ (middle)) and local ($\beta=0$ (right)) phase-field solutions with the obstacle potential at $t=0.0041$.  }\label{fig:ex5b}
 \end{figure} 

 \section{Conclusion}\label{sec:conclusion}
 In this work we have analyzed nonlocal phase-field models of Cahn-Hilliard and Allen-Cahn type coupled to a temperature evolution equation.  The novel model based on the non-mass conserving nonlocal Cahn-Hilliard equation can provide sharp interfaces in the solution that can evolve in time which can be contrasted with the nonlocal Allen-Cahn setting that allows for sharp interfaces only in a steady-state solution.  We have provided a detailed analysis of the well-posedness of the models and proved the conditions under which sharp interfaces are attained.  We have presented several numerical results that illustrate the theoretical findings.  
 
The results of this work show that the developed nonlocal phase-field model is a promising approach to better describe non-isothermal phase-transitions with sharp or very thin interfaces.  Our future work~\cite{BDGR23} will focus on the numerical investigation of the new model in the context of solidification of pure materials,  its performance and comparison to some existing models.

\section{Acknowledgments}
 The author would like to express sincere thanks to Stephen DeWitt,  Max Gunzburger and Balasubramaniam Radhakrishnan for numerous valuable discussions about modeling aspects in the context of solidification.

\end{document}